\title{On $G$-crossed Frobenius $\star$-algebras and fusion rings associated with braided $G$-actions}
\author{Prashant Arote and Tanmay Deshpande}
\date{}
\newcommand{\mylabel}[2]
    {\protected@write\@auxout{}{\string\newlabel{#1}{{#2}{\thepage}%
      {\@currentlabelname}{\@currentHref}{}}}}}%
\newcommand{\mylabel}[2]
    {\protected@write\@auxout{}{\string\newlabel{#1}{{#2}{\thepage}}}}}
\newtheorem{theorem}{Theorem}[section]
\theoremstyle{definition}
\newtheorem{definition}[theorem]{Definition}
\newtheorem{prop}[theorem]{Proposition}
\newtheorem{lemma}[theorem]{Lemma}
\newtheorem{example}[theorem]{Example}
\newtheorem{remark}[theorem]{Remark}
\newtheorem{corollary}[theorem]{Corollary}
\newcommand{\s}{\star}
\newcommand{\A}{A}
\newcommand{\B}{\mathcal{B}}
\newcommand{\C}{\mathbb{C}}
\newcommand{\D}{\mathcal{D}}
\newcommand{\ZG}{\mathbb{Z}[G]}
\newcommand{\Z}{\mathbb{Z}}
\newcommand{\M}{\mathcal{M}}
\newcommand{\Rep}{\Sim(\A_{1})}
\newcommand{\Sim}{\operatorname{Sim}}
\newcommand{\tr}{\operatorname{trace}}
\newcommand{\End}{\operatorname{End}}
\newcommand{\Pic}{\operatorname{Pic}}
\newcommand{\Hom}{\operatorname{Hom}}
\newcommand{\CoInd}{\operatorname{CoInd}}
\newcommand{\Frob}{\operatorname{Frob}}
\newcommand{\Supp}{\operatorname{Supp}}
\newcommand{\EqMod}{\operatorname{EqMod}}
\newcommand{\EqBr}{\operatorname{EqBr}}
\newcommand{\trace}{\operatorname{tr}}
\begin{document}
\maketitle

\begin{abstract}
For a finite group $G$, Turaev introduced the notion of a  braided $G$-crossed fusion category. 
The classification of braided $G$-crossed extensions of braided fusion categories was studied by Etingof, Nikshych and Ostrik in terms of certain group cohomological data.
 In this paper we will define the notion of a $G$-crossed  Frobenius $\star$-algebra and give a classification  of
 (strict) $G$-crossed extensions of a commutative Frobenius $\star$-algebra $R$ equipped with a given action of $G$, in terms of the second group cohomology $H^2(G,R^\times)$. Now suppose that $\B$ is a non-degenerate braided fusion category equipped with a braided action of a finite group $G$. We will see that the associated $G$-graded fusion ring is in fact a (strict) $G$-crossed Frobenius $\star$-algebra. We will describe this $G$-crossed fusion ring in terms of the classification of braided $G$-actions by Etingof, Nikshych, Ostrik and derive a Verlinde formula to compute its fusion coefficients. 
\end{abstract}

\section{Introduction}
Let  $G$ be a  finite group.
The goal of this paper is to define and study the notion of a $G$-crossed Frobenius $\s$-algebra and classify the $G$-crossed extensions of a commutative Frobenius $\s$-algebra in terms of  some cohomological data. The next goal is to use these results to describe the $G$-crossed fusion ring associated with a braided $G$ action on a non-degenerate braided fusion category. All algebras considered in this paper are over $\C$, though for some definitions and results it would also be possible to work over slightly more general commutative rings equipped with an involution. However most of the results make use of the properties of $\mathbb{C}$. We will now give some motivation behind the results of this paper.

Let us begin with the motivation behind the notion of a Frobenius $\star$-algebra (see Definition \ref{def:Frob star algebra}). Let $\mathcal{C}$ be fusion category over $\C$, namely a $\C$-linear finite  semisimple abelian rigid monoidal category whose unit object $\mathbbm{1}$ is simple. 
For more details about fusion and multi-fusion categories we refer to \cite{Onfusioncategories}. Then the complexified Grothendieck ring $K(\mathcal{C})$ of $\mathcal{C}$ has the structure of a Frobenius $\star$-algebra, with the $\star$-anti-involution being induced by the duality in $\mathcal{C}$, extended semi-linearly to $K(\mathcal{C})$. In particular, for any finite group $G$, the group algebra $\C[G]$ is naturally a Frobenius $\star$-algebra. More generally, any (complexified) based ring, in the sense of \cite{Lus:Arcata}, is naturally a Frobenius $\star$-algebra. 

Let us now study the motivation for $G$-graded and $G$-crossed Frobenius $\star$-algebras. A $G$-extension of a fusion category $\mathcal{C}$ is a $G$-graded fusion category $\mathcal{D}=\bigoplus\limits_{g\in G}\mathcal{C}_{g}$ whose trivial component $\mathcal{C}_1$ is equivalent to $\mathcal{C}$.
The  $G$-extensions of fusion categories were studied
by Etingof, Nikshych and Ostrik in \cite{FusionCA}.
The notion of a braided
$G$-crossed fusion category was introduced by  Turaev, see \cite{Turaev1999HomotopyFT},\cite{crossedgroupcategories}.
Roughly speaking, it is a $G$-graded fusion category $\D=\bigoplus\limits_{g\in G}\mathcal{C}_{g}$, equipped with a monoidal action of $G$ such that $g(\mathcal{C}_{h})=\mathcal{C}_{ghg^{-1}}$ and a family of natural isomorphisms 
$$c_{X,Y}:X\otimes Y\rightarrow g(Y)\otimes X\ \ \ \ \mbox{for any }g\in G,\ X\in\mathcal{C}_{g},\ Y\in \D$$
called $G$-braiding isomorphisms.
The above action and $G$-braiding are required to satisfy certain compatibility conditions.
In particular, its trivial component $\mathcal{C}_{1}$ is a braided fusion category equipped with a braided action of $G$.
A $G$-crossed extension of braided fusion category $\mathcal{B}$ is a $G$-crossed fusion category $\D$ whose trivial component is equivalent to $\B$.
We refer to \cite[Section 4.4.3]{DGNO} for a detailed discussion of braided $G$-crossed categories.
The classification of $G$-crossed extensions of a (non-degenerate) braided fusion category was carried out in \cite[Theorem 7.12]{FusionCA}.

Let $K(\mathcal{D})$ denote the complexified Grothendieck ring of a braided $G$-crossed fusion category $\mathcal{D}$.
Then $K(\D)$ is a $G$-graded Frobenius $\star$-algebra with an action of $G$.
This action of $G$ on $K(\D)$ satisfies certain conditions induced by the $G$-braiding.
Moreover, if $\mathcal{D}$ is a $G$-crossed extension of braided fusion category $\mathcal{B}$ then the trivial graded component of $K(\mathcal{D})$ is equal to $K(\mathcal{B})$.
The complexified Grothendieck ring $K(\D)$ is an example of $G$-crossed Frobenius $\star$-algebra (see \textsection\ref{sec:motivation}).
One of the motivation behind the study of  $G$-crossed Frobenius $\star$-algebra is to study the  fusion ring $K(\D)$.

We will define $G$-crossed Frobenius $\star$-algebra by abstracting the properties satisfied by  complexified Grothendieck ring of a $G$-crossed braided fusion category. 
Roughly, \textit{a $G$-crossed Frobenius $\star$-algebra} is a $G$-graded Frobenius $\star$-algebra equipped with  an action of $G$ which satisfies certain conditions (see Definition \ref{def: crossed algebra}). In particular its identity component is a commutative Frobenius $\star$-algebra equipped with a $G$-action.
A $G$-crossed extension of a commutative Frobenius $\star$-algebra $R$ is a $G$-crossed Frobenius $\star$-algebra whose trivial component is isomorphic to $R$. One of our main goals in this paper is to classify such $G$-crossed extensions.

We now briefly describe the organization and main results of this paper. In \textsection \ref{sec:Frob star} we will recall the notion of a Frobenius $\star$-algebra and that they are semi-simple algebras.
A basic example of a Frobenius $\s$-algebra is a matrix algebra $M_{n}(\C)$.
We  prove that there is a bijective correspondence  between the set of isomorphism classes of commutative Frobenius $\star$-algebras and the set of tuples of positive real numbers upto permutation.
In  \textsection\ref{sec:graded} we will recall the definition of a group graded ring and of a $G$-graded Frobenius $\star$-algebra.

In \textsection \ref{sec:crossed algebra} we will define the notion of a $G$-crossed Frobenius $\star$-algebra and that of a  strict $G$-crossed Frobenius $\star$-algebra.
In \textsection \ref{sec:motivation} we will see important examples of (strict) $G$-crossed Frobenius $\star$-algebras that arise from braided $G$-actions on non-degenerate braided fusion categories. We will prove some results about the structure and classification of strict $G$-crossed Frobenius $\star$-algebras (see Prop. \ref{G-crosed str}).
In particular, we will prove the following main result:
\begin{theorem}\label{main result}
Let $G$ be a finite group and let $R$ be a commutative Frobenius $\star$-algebra equipped with an action of $G$ by Frobenius $\star$-algebra automorphisms.
Then there is a bijective correspondence between the set of isomorphism classes of  strict $G$-crossed  extensions of $R$, denoted by $\Frob(G,R)$, and the second group cohomology $H^{2}(G,R^{\times})$ \emph{i.e.}
$$\Frob(G,R)\longleftrightarrow H^{2}(G,R^{\times}).$$
\end{theorem}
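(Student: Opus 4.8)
The plan is to realize every strict $G$-crossed extension of $R$ as a \emph{crossed product} of $R$ by $G$ twisted by a $2$-cocycle, and thereby reduce the classification to the computation of $H^2(G,R^\times)$. The starting point is the structural description of strict $G$-crossed Frobenius $\star$-algebras provided by Proposition \ref{G-crosed str}: in a strict extension $A=\bigoplus_{g\in G}A_g$ with $A_1=R$, each homogeneous component $A_g$ is a free $R$-module of rank one. First I would fix, for each $g\in G$, a generator $u_g\in A_g^\times$ (with $u_1=1$), so that $A_g=R\,u_g$ and $A=\bigoplus_{g}R\,u_g$. The $G$-action and grading axioms of Definition \ref{def: crossed algebra} should force the commutation relation $u_g\,r=g(r)\,u_g$ for all $r\in R$, exhibiting $A$ as a twisted crossed product.

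Next I would extract the cohomological invariant. Since $A_g A_h\subseteq A_{gh}=R\,u_{gh}$, the product $u_g u_h$ must equal $\tau(g,h)\,u_{gh}$ for a unique $\tau(g,h)\in R^\times$. Associativity of $A$, combined with the relation $u_g r=g(r)u_g$, translates verbatim into the $2$-cocycle identity $\tau(g,h)\,\tau(gh,k)=g(\tau(h,k))\,\tau(g,hk)$ in the $G$-module $R^\times$. Replacing the chosen generators by $u_g\mapsto\lambda(g)\,u_g$ with $\lambda(g)\in R^\times$ multiplies $\tau$ by the coboundary $\partial\lambda$, and an isomorphism of strict $G$-crossed extensions of $R$ is precisely such a rescaling; hence the class $[\tau]\in H^2(G,R^\times)$ is a well-defined invariant, giving a map $\Frob(G,R)\to H^2(G,R^\times)$.

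For surjectivity and injectivity I would run the construction in reverse: given a cocycle $\tau\in Z^2(G,R^\times)$, define $A_\tau=\bigoplus_{g}R\,u_g$ with multiplication determined by $u_g r=g(r)u_g$ and $u_g u_h=\tau(g,h)u_{gh}$, which is associative exactly because $\tau$ is a cocycle. The main work --- and the step I expect to be the principal obstacle --- is to equip $A_\tau$ with the remaining structure of a strict $G$-crossed Frobenius $\star$-algebra and to verify all the axioms of Definition \ref{def: crossed algebra}. Concretely I would define the $\star$-anti-involution, which must send $A_g$ to $A_{g^{-1}}$, by prescribing $u_g^\star$ as an appropriate unit multiple of $u_{g^{-1}}$, define the Frobenius form by extending that of $R$ so that it pairs $A_g$ with $A_{g^{-1}}$, and let $G$ act on $A_\tau$ compatibly with its action on $R$. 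The delicate points are that $\star$ be conjugate-linear, anti-multiplicative and involutive and that the Frobenius trace be $G$-invariant and nondegenerate; one has to check that these constraints can always be met (after possibly normalizing the $u_g$) and, crucially, that they impose no condition on $\tau$ beyond the cocycle identity already used --- so that distinct classes in $H^2(G,R^\times)$ remain distinct and every class is realized. Granting this, the assignments $A\mapsto[\tau]$ and $\tau\mapsto A_\tau$ are mutually inverse, yielding the desired bijection $\Frob(G,R)\longleftrightarrow H^2(G,R^\times)$.
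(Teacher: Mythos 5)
Your argument rests on a structural claim that is false for the paper's notion of $G$-crossed extension: that each homogeneous component $A_g$ is a free $R$-module of rank one with a unit generator $u_g$ satisfying $u_g r = g(r)u_g$. The crossed-commutativity axiom of Definition \ref{def: crossed algebra} holds for \emph{every} $a\in A_g$, $b\in A$; taking $a\in A_1$ (so $g=1$) it forces $A_1=R$ to be \emph{central} in $A$ (cf.\ Remark \ref{crossed algebra:properties}). Combining centrality $r\,u_g=u_g\,r$ with the relation $u_g\,r=g(r)\,u_g$ that you correctly derive gives $(r-g(r))\,u_g=0$ for all $r\in R$; if $A_g$ were free of rank one this would force $g$ to act trivially on $R$. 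In reality each $A_g$ is annihilated by the ideal generated by $\{r-g(r):r\in R\}$, and the correct structure --- Proposition \ref{G-crosed str}, which you cite but misread --- is $A_{g^{-1}}\cong\bigoplus_{\chi\in\Sim(R)^{g}}\C\cdot e_{\chi}^{g}$: a cyclic $R$-module supported only on the $g$-fixed characters, of $\C$-dimension $|\Sim(R)^{g}|$, possibly zero. Concretely, for $R=\C\oplus\C$ and $G=\Z/2\Z$ swapping the factors, the unique strict $G$-crossed extension is $A=R$ with zero non-trivial component, consistent with $H^{2}(G,R^{\times})=0$ (Shapiro), whereas your $A_\tau=R\oplus R\,u_g$ is four-dimensional and violates crossed commutativity.

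Consequently both directions of your argument break whenever the $G$-action on $R$ is non-trivial: the forward map cannot be defined by comparing generators $u_g$ (they need not exist), and the reverse construction $A_\tau=\bigoplus_g R\,u_g$ is a crossed product, i.e.\ a \emph{strongly} $G$-graded algebra in the sense of \textsection\ref{sec:strongly graded algebra}, not a $G$-crossed Frobenius $\star$-algebra. The paper instead extracts scalars $\phi_\chi(g,h)\in\C^\times$ from $e_{\chi}^{g}e_{\chi}^{h}=\phi_\chi(g,h)e_{\chi}^{gh}$ for $g,h$ in the stabilizer $G_\chi$, and assembles them into a single class in $H^{2}(G,R^{\times})$ via the decomposition of $R^{\times}$ as a direct sum of modules $\CoInd_{G_\chi}^{G}(\C^{\times})$ and Shapiro's lemma --- this is precisely the step your free-rank-one picture lets you skip, and it cannot be skipped. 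Your crossed-product idea is not unrelated to the truth: as the paper notes (Prop.\ \ref{prop: strongly graded -strict} and the final remark), strongly $G$-graded extensions of $R$ are also classified by $H^{2}(G,R^{\times})$, and they correspond to strict $G$-crossed extensions via $A\mapsto Z_{R}(A)$; but establishing that correspondence is genuine additional work, and without it your proof classifies the wrong objects.
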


We will also state and prove an analogue of the Verlinde formula in case of a strict $G$-crossed Frobenius $\star$-algebra (see Corollary \ref{Verlinde formula:any genus}).
The twisted categorical Verlinde formula in case of braided $G$-crossed categories was proved in \cite{Deshpande2019CrossedMC}. We will then derive a slight generalization of this result from \cite{Deshpande2019CrossedMC} as described below. We will also relate the above result to the classification of braided $G$-actions and braided $G$-crossed fusion categories of \cite{FusionCA}. 

Let $\B$ be a non-degenerate braided fusion category and let $c:G\to{}\EqBr(\B)\cong \Pic(\B)$ be a group homomorphism from $G$ to the group $\EqBr(\B)$ of braided auto-equivalences of $\B$ up to natural isomorphisms which is isomorphic to the group $\Pic(\B)$ of equivalence classes of invertible $\B$-module categories. We refer to \cite{FusionCA} for more details about these and related results. Note that such a group homomorphism induces an action of $G$ by Frobenius $\star$-algebra automorphisms on $K(\B)$. Let $\mathcal{O}_{\B}^{\times}$ denote the abelian group of isomorphism classes of invertible (in particular they will also be simple) objects in $\B$. Note that $c$ also induces an action of $G$ on the commutative group $\mathcal{O}^\times_\B$. We have the short exact sequence of $G$-modules
\[
1\to \mathcal{O}_{\B}^{\times}\to K(\B)^\times\to  K(\B)^\times/\mathcal{O}_{\B}^{\times}\to 1.
\]

Now given such a $c$, \cite{FusionCA} construct an element $T\in H^3(G,\mathcal{O}^\times_\B)$. The lifts of $c$ to a braided $G$ action on $\B$, namely morphisms of 1-groups $\underline{c}:G\to \underline{\EqBr}(\B)\cong \underline{\Pic}(\B)$ lifting $c$, are classified by a certain $H^2(G,\mathcal{O}^\times_\B)$-torsor $\mathcal{T}$ which is non-empty if and only if $T\in H^3(G,\mathcal{O}^\times_\B)$ vanishes. Given such a $c$, we will construct an element $t\in H^2(G,K(\B)^\times/\mathcal{O}_{\B}^{\times})$ which is mapped to $T\in H^3(G,\mathcal{O}^\times_\B)$ under the connecting homomorphism (see Corollary \ref{associativity}) coming from the above short exact sequence. When the obstruction $T$ vanishes, we construct a mapping 
\begin{equation}\label{eq:Phi}\Phi:\mathcal{T}\to H^2(G,K(\B)^\times).\end{equation}

Let $\underline{c}:G\to \underline{\EqBr}(\B)\cong \underline{\Pic}(\B)$ define an action of $G$ by braided auto-equivalences on $\B$ lifting $c$. Such a braided $G$-action corresponds to an element $[\underline{c}]\in \mathcal{T}$. Using the results of \cite{FusionCA}, we can define an associated strict $G$-crossed Frobenius $\star$-extension of $K(\B)$, which we call the fusion algebra associated with the braided $G$-action, see Proposition \ref{Thm:pic crossed algebra}. We will see that the element (given by Theorem \ref{main result}) of $H^2(G,K(\B)^\times)$ corresponding to this strict $G$-crossed Frobenius $\star$-extension is given by $\Phi([\underline{c}])$ (see also Remark \ref{rk:cohclass}). We will derive a Verlinde formula to compute the fusion rules in the above $G$-crossed Frobenius $\star$-algebra. We will also prove an analogue of this result in case $\B$ is a modular fusion category (i.e. $\B$ is also equipped with a ribbon structure, see \cite{DGNO}) and the $G$-action is modular, and express the fusion coefficients in terms of the associated crossed S-matrices. We refer to Corollaries \ref{coro: twisted Verlinde formula} and \ref{coro:twisted Verlinde ribbon} for these categorical twisted Verlinde formulae.

\section{Frobenius $\s$-algebra}\label{sec:Frob star}
We will begin by recalling the definition of Frobenius algebras,  Frobenius $\s$-algebras and some of their properties.
For more details see \cite{Twisted}.
\begin{definition}[Frobenius Algebra]
 (i) A Frobenius algebra $\A$ is a finite dimensional associative unital $\C$-algebra equipped with a linear functional $\lambda:\A\rightarrow\C$ such that the bilinear form on $A$ defined by $(a,b)=\lambda(ab)$  is nondegenerate.\\
 (ii) Moreover if $\lambda(ab)=\lambda(ba),\ \forall\ a,\ b\in \A$, then $\A$ is called as symmetric Frobenius algebra. (A linear functional $\lambda:A\to \C$ on an algebra satisfying this condition is said to be a class functional.)
\end{definition}
\begin{remark}
There is an equivalent definition of a Frobenius algebra: A Frobenius algebra $\A$ is a finite-dimensional, unital, associative $\C$-algebra  equipped with a left $\A$-module isomorphism $\Theta:\A\rightarrow\A^{\vee}$. 

The equivalence between the two definitions is given by:
\begin{gather*}
(\A,\lambda)\ \rightsquigarrow\ \Theta(a)=\lambda(- \cdot a) \\
(\A,\phi)\ \rightsquigarrow\ \lambda (x)=\Theta (1)(x).
\end{gather*}
\end{remark}
\begin{definition}[Frobenius $\s$-algebra]\label{def:Frob star algebra}
A Frobenius $\s$-algebra $\A$ is a finite dimensional associative unital $\C$-algebra equipped with a {\emph{ class functional}} $\lambda:\A\rightarrow\C$ and a map $\s:\A\rightarrow\A$, such that the following  holds:

$\bullet\ (a+b)^{\s}=a^{\s}+b^{\s}\ \forall\ a,\ b\in \A,$

$\bullet \ (a^{\s})^{\s}=a\ \forall\ a\in \A,$

$\bullet\ (ab)^{\s}=b^{\s}a^{\s}\ \forall\ a,b\in \A,$

$\bullet\ (\alpha a)^{\s}=\overline{\alpha}a^{\s}\  \forall\ \alpha\in \C, a\in\A,$

$\bullet\ \lambda(a^\star)=\overline{\lambda(a)} \ \forall\  a\in A,$

$\bullet\ $The Hermitian form $\langle a,b\rangle:=\lambda(ab^\star)$ admits an orthonormal basis in $A$.
\end{definition}

\begin{remark}
Note that by the given conditions, $\overline{\lambda(ab^\star)}=\lambda((ab^\star)^\star)=\lambda(ba^\star)$. Hence $\langle\cdot,\cdot\rangle$ is indeed a Hermitian form on $A$. Since all our algebras are over $\C$, the existence of an orthonormal basis in the last condition is equivalent to the Hermitian form being positive definite.
\end{remark}
\begin{definition}
Let $(A_{1},\lambda_{1},\star_{1})$ and  $(A_{2},\lambda_{2},\star_{2})$  be two Frobenius $\star$-algebras.
An algebra homomorphism $f:A_{1}\rightarrow A_{2}$ is called as Frobenius $\star$-algebra homomorphism if $\lambda_{1}(a)=\lambda_{2}\circ f(a)$ and $f(a^{\star_{1}})=f(a)^{\star_{2}}$ for all $a\in A_{1}$.
We will denote the set of all Frobenius $\star$-algebra automorphisms of $(A,\lambda,\star)$ by $\mbox{Aut}^{\Frob}(A)$.
\end{definition}
\begin{example}
(1) $\A=M_{n}(\C)$ with linear functional $\lambda=$ trace and the $\s$-map is conjugate transpose \emph{i.e.} $X^{\s}=\overline{X}^{t}$. \\
(2) Let $G$ be a finite group. 
Then $\A=\C[G]$ with linear functional $\lambda$ defined by 
$\lambda(g)=$
$\begin{cases}
1\ \mbox{if}\ g=e ;\\
0\ \mbox{otherwise}.
\end{cases}$ 
The $\s$-map is defined by $g^{\s}=g^{-1}$ and extend to $\C[G]$ by conjugate linearity.
\end{example}
\begin{remark}
Let $R$ be a commutative ring with unity which is equipped with an involution $\overline{(\cdot)}:R\rightarrow R$.
One can also define the notion of  Frobenius algebra and Frobenius $\star$-algebra for $R$-algebras, but in this paper we stick to $\C$ with the involution being complex conjugation. 
\end{remark}

\begin{remark}\label{frob star:properties} 
Let $(\A,\lambda,\star)$ be a Frobenius $\s$-algebra. Then the following  holds:
\begin{enumerate}
\item $(\A,\lambda)$ is symmetric Frobenius algebra. 
\item $\A$ is a semisimple algebra (see \cite[Theorem 2.4]{Twisted}).
\item Let $\Sim(\A)$ denote the set of irreducible representations of $\A$.
Let $\{e_{M}:M\in\Sim(\A)\}$ be the set of primitive central orthogonal idempotents of $\A$ then $e_{M}^{\s}=e_{M}$, for all $M\in\Sim(\A)$ (see \cite[Lemma 2.5]{Twisted}).
\end{enumerate}

\end{remark}

Next, we will see the possible Frobenius $\s$-algebra structures on $\End(V)$ for a finite dimensional $\C$-vector space $V$.

\begin{prop}\label{matrix algebra}
Let $V$ be a  finite dimensional vector space over $\C$.
Let $(\End(V),\lambda,\star)$ be a Frobenius $\s$-algebra.
Then  $\lambda=\alpha\cdot \tr$, for some positive real number $\alpha$ and $\star:\End(V)\rightarrow\End(V)$ is an adjoint map corresponding to some positive definite Hermitian form on $V$.
\end{prop}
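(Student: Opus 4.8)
The plan is to use that $\End(V)\cong M_{n}(\C)$ (with $n=\dim V$) is a simple algebra, and to treat the two assertions in turn.

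\emph{The functional.} First I would pin down $\lambda$ from the fact that it is a class functional, i.e.\ it vanishes on every commutator $ab-ba$. On $M_{n}(\C)$ the commutators span exactly the codimension-one subspace of trace-zero matrices (the kernel of $\tr$), so any class functional is a scalar multiple of the trace; hence $\lambda=\alpha\cdot\tr$ for some $\alpha\in\C$. To see that $\alpha$ is a positive real number I would test the remaining axioms on the identity. Since $\star$ is an anti-involution it fixes the unit, $1^{\star}=1$, so the axiom $\lambda(a^\star)=\overline{\lambda(a)}$ applied to $a=1$ forces $\alpha n=\lambda(1)\in\mathbb{R}$, hence $\alpha\in\mathbb{R}$. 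Positive-definiteness of the Hermitian form then gives $0<\langle 1,1\rangle=\lambda(1\cdot 1^{\star})=\lambda(1)=\alpha n$, so $\alpha>0$.

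\emph{The involution.} Let $X\mapsto X^{*}:=\overline{X}^{t}$ denote the standard conjugate transpose, itself a conjugate-linear anti-involution. The composite $\phi:=\star\circ *$ is then a genuine $\C$-linear algebra automorphism of $M_{n}(\C)$: the two conjugate-linearities cancel, and two anti-homomorphisms compose to a homomorphism. By Skolem--Noether every automorphism of $M_{n}(\C)$ is inner, so $\phi(X)=SXS^{-1}$ for some invertible $S$; unwinding the definition of $\phi$ yields
\[
X^{\star}=SX^{*}S^{-1}\qquad\text{for all }X\in\End(V).
\]
Imposing $(X^{\star})^{\star}=X$ forces $S(S^{*})^{-1}$ to be central, hence a scalar $c$ with $|c|=1$; multiplying $S$ by a suitable unit scalar (which does not alter the map $X\mapsto SX^{*}S^{-1}$) I can normalize $S$ so that $S=S^{*}$ is Hermitian.

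Finally I would extract definiteness of $S$ from the positive-definiteness axiom. Diagonalizing the Hermitian matrix $S$ by a unitary change of basis (which preserves $*$), I may assume $S=\operatorname{diag}(d_{1},\dots,d_{n})$ with real $d_{i}$. A short computation on matrix units gives $(E_{ij})^{\star}=SE_{ji}S^{-1}=(d_{j}/d_{i})E_{ji}$, so the Hermitian form is diagonal in the basis $\{E_{ij}\}$ with
\[
\langle E_{ij},E_{ij}\rangle=\alpha\,\tr\!\left(E_{ij}S E_{ji}S^{-1}\right)=\alpha\,\frac{d_{j}}{d_{i}}.
\]
Positive-definiteness requires $d_{j}/d_{i}>0$ for all $i,j$, which means all $d_{i}$ share a common sign; since $S$ and $-S$ induce the same $\star$, I may take $S$ positive definite. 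Setting $H:=S^{-1}$, a positive definite Hermitian matrix, the formula $X^{\star}=SX^{*}S^{-1}=H^{-1}X^{*}H$ is precisely the adjoint of $X$ with respect to the positive definite Hermitian form on $V$ with Gram matrix $H$, as claimed. The routine part here is identifying $\lambda$; the crux is the second assertion, where the clean move is to convert the conjugate-linear anti-involution $\star$ into an honest inner automorphism (via composition with $*$ and Skolem--Noether), and then to extract \emph{definiteness} of $S$ — not merely its existence — from positivity of the trace form. I expect the unit-scalar normalization making $S$ Hermitian and the common-sign argument to be the most delicate bookkeeping.
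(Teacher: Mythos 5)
Your proof is correct, but for the main assertion it takes a genuinely different route from the paper. For the functional both arguments coincide (class functionals on $M_n(\C)$ kill commutators, hence are multiples of $\tr$; positivity of $\langle 1,1\rangle$ pins down $\alpha$). For the involution, the paper argues basis-free: it fixes a left $\End(V)$-module embedding $f:V\hookrightarrow \End(V)$, pulls back the given positive definite form to $V$ by $\langle v,w\rangle_f:=\lambda\bigl(f(v)f(w)^{\star}\bigr)$, and then a three-line computation using the class-functional property of $\lambda$ and the left-module property of $f$ shows directly that $\langle Tv,w\rangle_f=\langle v,T^{\star}w\rangle_f$ — the desired Hermitian form is simply constructed, with no need to normalize anything. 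You instead classify the involution itself: composing $\star$ with conjugate transpose gives a $\C$-linear automorphism, Skolem--Noether makes it inner, and you recover $X^{\star}=SX^{*}S^{-1}$, after which the involutivity axiom (to make $S$ Hermitian, up to the unit-scalar adjustment you correctly perform) and positivity tested on matrix units (to force a common sign of the eigenvalues $d_i$) yield a positive definite $S$. Your route is more computational and requires the normalization bookkeeping you flag, but it buys an explicit matrix formula $X^{\star}=H^{-1}X^{*}H$ for the involution, which the paper's argument never exhibits; the paper's route is shorter and conceptually cleaner, trading the explicit formula for an abstract identification of the form on $V$. Both arguments are complete and correct (your verification steps — centrality of $S(S^{*})^{-1}$, $|c|=1$, invariance of the map under scaling $S$, and the diagonal form $\langle E_{ij},E_{ij}\rangle=\alpha\, d_j/d_i$ — all check out).
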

\begin{proof}
As $\lambda$ is a non-degenerate \emph{class functional} on $\End(V)$ it must be a non-zero multiple of the trace: $$\lambda(X)=\alpha\cdot\tr(X)\ \forall\  X\in \End(V)$$ 
for some $\alpha\in\C^\times$.
As  $Id^{\s}=Id$ in $\End(V)$, by positive definiteness we have 
$$0< \langle Id,Id\rangle=\lambda(Id\cdot Id^{\s})=\alpha\cdot\tr(Id)=\alpha\cdot n.$$
Thus $\lambda=\alpha\cdot\tr$ for some $\alpha\in \mathbb{R}^{+}$.

We know that $\End(V)=V^*\otimes V\cong V^{\oplus n}$ as a left $\End(V)$-module.
Fix a left $\End(V)$-module embedding  $f:V\hookrightarrow \End(V)$.
By the definition of a Frobenius $\s$-algebra, $\End(V)$ is equipped with a positive definite Hermitian form. Define a positive definite Hermitian form on $V$ using the embedding $f$ as:
$$\langle v,w\rangle_{f}=\lambda(f(v)f(w)^{\star})\ \ \ \forall \ v,w\in V.$$
Then by the definition of a Frobenius $\star$-algebra, $\langle\cdot,\cdot \rangle_{f}$ is a positive definite Hermitian form on $V$.
Let $T\in \End(V)$ be any endomorphism, then for $v,w\in V$, $$\langle Tv,w\rangle_{f}=\lambda(f(Tv)f(w)^{\star})=\lambda(Tf(v)f(w)^{\star})=\lambda(f(v)f(w)^{\star}T)$$ $$=\lambda(f(v)(T^{\star}f(w))^{\star})=\lambda(f(v)f(T^{\star}w)^{\star})=\langle v,T^{\star}w\rangle_{f}.$$
Thus $\star$ is an adjoint map corresponding to positive definite Hermitian form $\langle\cdot ,\cdot\rangle_{f}$.
This proves the proposition.
\end{proof}
\begin{theorem}\label{classificationfrobstar}
Let $A$ be any Frobenius $\star$-algebra then 
$$A\cong\bigoplus_{i=1}^{r} M_{n_{i}}(\C)$$
where each $M_{n_{i}}(\C)$ is a Frobenius $\star$-algebra with a linear functional $\lambda_i=\alpha_{i}\cdot\tr$ for some $\alpha_{i}\in\mathbb{R}^{+}$ and the $\star$-map is conjugate transpose.
\end{theorem}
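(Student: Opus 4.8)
The plan is to leverage the two facts already recorded in Remark \ref{frob star:properties}: that a Frobenius $\s$-algebra $A$ is semisimple, and that its primitive central orthogonal idempotents are $\s$-fixed. First I would invoke the Artin--Wedderburn theorem for the finite-dimensional semisimple $\C$-algebra $A$. Writing $\{e_M : M \in \Sim(A)\}$ for the primitive central orthogonal idempotents, this gives a direct sum decomposition $A = \bigoplus_{M} A_M$ into two-sided ideals, where $A_M = e_M A \cong \End(V_M) \cong M_{n_M}(\C)$ and $V_M$ is the simple module affording $M$. This already yields the desired decomposition at the level of $\C$-algebras; the content of the theorem is that each summand inherits a Frobenius $\s$-structure of the stated form.

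Next I would show that the pair $(\lambda,\s)$ descends to each block. Since $e_M^{\s}=e_M$ and the $e_M$ are central, for $a=e_M a e_M\in A_M$ we get $a^{\s}=e_M a^{\s} e_M\in A_M$, so $\s$ preserves $A_M$. The functional $\lambda$ restricts to a class functional $\lambda_M:=\lambda|_{A_M}$. The essential point is that the blocks are mutually orthogonal for the Hermitian form $\langle a,b\rangle=\lambda(ab^{\s})$: if $a\in A_M$ and $b\in A_{M'}$ with $M\neq M'$, then $b^{\s}\in A_{M'}$ and $ab^{\s}\in A_M A_{M'}=0$, so $\langle a,b\rangle=0$. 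Consequently the positive-definite form on $A$ restricts to a positive-definite Hermitian form on each $A_M$, and in particular $\lambda_M$ is non-degenerate. Thus each $(A_M,\lambda_M,\s|_{A_M})$ is itself a Frobenius $\s$-algebra.

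Now I would apply Proposition \ref{matrix algebra} to each block $A_M\cong\End(V_M)$. This immediately gives $\lambda_M=\alpha_M\cdot\tr$ for some $\alpha_M\in\mathbb{R}^{+}$ and identifies $\s|_{A_M}$ with the adjoint map for some positive-definite Hermitian form $h_M$ on $V_M$. The final step is cosmetic: choosing a basis of $V_M$ that is orthonormal with respect to $h_M$ (possible by Gram--Schmidt, since $h_M$ is positive definite) realizes $A_M\cong M_{n_M}(\C)$ in such a way that the adjoint for $h_M$ becomes literally the conjugate transpose $X\mapsto\overline{X}^{t}$, and $\tr$ becomes the matrix trace. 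Reindexing the finitely many simple modules as $i=1,\dots,r$ and setting $n_i=n_{M_i}$, $\alpha_i=\alpha_{M_i}$ completes the proof.

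I expect the main obstacle to be the verification that the Frobenius $\s$-structure restricts block-by-block --- specifically the orthogonality of distinct blocks under the Hermitian form --- but this follows cleanly from $e_M^{\s}=e_M$ together with the orthogonality of the central idempotents. Everything afterwards is a direct appeal to Proposition \ref{matrix algebra} and a standard choice of orthonormal basis.
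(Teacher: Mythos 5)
Your proof is correct and takes essentially the same route as the paper: semisimplicity together with the $\star$-invariance of the primitive central idempotents (Remark \ref{frob star:properties}) reduces the statement block-by-block to Proposition \ref{matrix algebra}. The paper's own proof is precisely this two-line reduction; you have simply supplied the details it leaves implicit (that $\star$ and $\lambda$ restrict to each block, orthogonality of distinct blocks under the Hermitian form, and the choice of an orthonormal basis identifying the adjoint with conjugate transpose).
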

\begin{proof}
  We know that any Frobenius $\star$-algebra is a semisimple algebra and the $\star$-map preserve the primitive central simple idempotents.
Then the statement follows from  the Prop. \ref{matrix algebra}.
 \end{proof}
\noindent We obtain the following from the above result:
 \begin{corollary}\label{Commutative Frob algebra}
There is a bijection between the set of isomorphism classes of $n$-dimensional commutative Frobenius $\s$-algebras and  the set of  $n$-tuples of positive real numbers up to permutation.
 \end{corollary}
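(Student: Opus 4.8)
The plan is to deduce the corollary directly from the structure theorem (Theorem \ref{classificationfrobstar}). First I would observe that if $A$ is a commutative Frobenius $\star$-algebra, then in the decomposition $A \cong \bigoplus_{i=1}^r M_{n_i}(\C)$ each block must itself be commutative, forcing $n_i = 1$ for every $i$. Since $\dim_\C A = n$, this means $r = n$ and $A \cong \C^n$ as an algebra. Under this identification the class functional is $\lambda(z_1,\dots,z_n) = \sum_{i=1}^n \alpha_i z_i$ with each $\alpha_i \in \mathbb{R}^+$ (the trace on $M_1(\C) = \C$ being the identity), and the $\star$-map is coordinate-wise complex conjugation (conjugate transpose on $1 \times 1$ matrices). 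This assigns to each isomorphism class of commutative Frobenius $\star$-algebra the unordered tuple $(\alpha_1, \dots, \alpha_n)$.

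Next I would define the two maps realizing the bijection. In one direction, send the isomorphism class of $(A, \lambda, \star)$ to the tuple $(\alpha_1, \dots, \alpha_n)$ extracted above. In the other direction, send an $n$-tuple $(\alpha_1, \dots, \alpha_n)$ of positive reals to the algebra $\C^n$ equipped with $\lambda = \sum_i \alpha_i(\cdot)_i$ and coordinate-wise conjugation; a short check confirms this is a bona fide commutative Frobenius $\star$-algebra, with the positive definiteness of $\langle \cdot, \cdot\rangle$ following from $\alpha_i > 0$. This gives surjectivity immediately and shows the second map is a one-sided inverse on tuples.

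The main point to verify --- and the only real content beyond the structure theorem --- is that the first map is well-defined and injective, i.e.\ that two such algebras are isomorphic as Frobenius $\star$-algebras if and only if their tuples agree up to permutation. The key observation is that the primitive idempotents $e_1, \dots, e_n$ of $\C^n$ are intrinsic and satisfy $\lambda(e_i) = \alpha_i$. Any algebra isomorphism $f : \C^n \to \C^n$ permutes these idempotents, say $f(e_i) = e'_{\sigma(i)}$ for some $\sigma \in S_n$; the $\star$-condition is then automatic since conjugation is determined coordinate-wise and is compatible with any coordinate permutation. The Frobenius condition $\lambda_1 = \lambda_2 \circ f$ forces $\alpha_i = \alpha'_{\sigma(i)}$, so isomorphic algebras yield the same tuple up to permutation; conversely, any permutation $\sigma$ with $\alpha_i = \alpha'_{\sigma(i)}$ defines, via $e_i \mapsto e'_{\sigma(i)}$, a Frobenius $\star$-algebra isomorphism. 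I expect the only mild subtlety to be bookkeeping the direction of the Frobenius condition in the definition of a Frobenius $\star$-homomorphism, which fixes whether one compares $\alpha_i$ with $\alpha'_{\sigma(i)}$ or with the inverse permutation.
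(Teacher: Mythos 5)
Your proposal is correct and follows essentially the same route as the paper, which states the corollary as an immediate consequence of Theorem \ref{classificationfrobstar}: commutativity forces all matrix blocks to be $1\times 1$, so $A\cong\C^n$ with $\lambda=\sum_i\alpha_i(\cdot)_i$, $\alpha_i>0$, and coordinate-wise conjugation as $\star$. Your additional verification that algebra isomorphisms permute the primitive idempotents and that the Frobenius condition matches the weights up to permutation is exactly the bookkeeping the paper leaves implicit.
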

 \begin{corollary}\label{auto-star commute}
  Let $A$ be a commutative Frobenius $\star$-algebra and let $f:A\rightarrow A$ be any algebra automorphism.
  Then $f$ commute with $\star$-map \emph{i.e.}
  $$f(a^{\star})=f(a)^{\star}\ \ \ \ \ \ \forall\ a\in A.$$
 \end{corollary}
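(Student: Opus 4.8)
The plan is to reduce to the concrete description of commutative Frobenius $\star$-algebras furnished by Theorem \ref{classificationfrobstar}, and then to argue that any algebra automorphism must permute the primitive idempotents, on which the $\star$-map is already completely understood. The guiding observation is that although $f$ is assumed only to be an algebra automorphism (and in particular need not preserve $\lambda$), the $\star$-map on a commutative Frobenius $\star$-algebra is rigidly pinned down by the algebra structure alone.

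First I would invoke Theorem \ref{classificationfrobstar} together with commutativity: since $A$ is commutative, each simple factor in the decomposition must be one-dimensional, so $A\cong\C^{n}$ as an algebra, with primitive central orthogonal idempotents $e_{1},\dots,e_{n}$ summing to $1$. By Remark \ref{frob star:properties}(3) these satisfy $e_{i}^{\star}=e_{i}$, and since $\star$ is conjugate-linear this determines $\star$ completely: for $a=\sum_{i}\lambda_{i}e_{i}$ one has $a^{\star}=\sum_{i}\overline{\lambda_{i}}e_{i}$, that is, $\star$ is coordinatewise complex conjugation in the idempotent basis.

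Next I would show that any algebra automorphism $f$ permutes the $e_{i}$. Indeed $\{f(e_{i})\}$ is again a complete system of orthogonal idempotents, because $f$ preserves products, sums and the unit; and since $f$ is bijective it carries primitive idempotents to primitive idempotents, so it must carry the minimal idempotents $e_{i}$ of $\C^{n}$ to minimal idempotents. As $\{e_{1},\dots,e_{n}\}$ is exactly the set of primitive idempotents of $\C^{n}$, there is a permutation $\sigma$ with $f(e_{i})=e_{\sigma(i)}$. Crucially, since $f$ is $\C$-linear it preserves the scalar coefficients, so $f\bigl(\sum_{i}\lambda_{i}e_{i}\bigr)=\sum_{i}\lambda_{i}e_{\sigma(i)}$.

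Finally I would simply compare the two sides. Writing $a=\sum_{i}\lambda_{i}e_{i}$, one computes $f(a^{\star})=\sum_{i}\overline{\lambda_{i}}\,e_{\sigma(i)}$, while $f(a)^{\star}=\bigl(\sum_{i}\lambda_{i}e_{\sigma(i)}\bigr)^{\star}=\sum_{i}\overline{\lambda_{i}}\,e_{\sigma(i)}^{\star}=\sum_{i}\overline{\lambda_{i}}\,e_{\sigma(i)}$, using $e_{\sigma(i)}^{\star}=e_{\sigma(i)}$; hence the two coincide. The only point that requires any care — and the closest thing to an obstacle — is recognizing that no hypothesis relating $f$ to $\lambda$ is needed: the $\star$-map is characterized purely by conjugate-linearity together with fixing the primitive idempotents, and this is precisely the structure that $f$ respects. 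Once the idempotent permutation is established, the conclusion is immediate.
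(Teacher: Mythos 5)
Your proof is correct and takes exactly the route the paper intends: the corollary is stated as an immediate consequence of Theorem \ref{classificationfrobstar}, under which a commutative Frobenius $\star$-algebra is $\C^{n}$ with $\star$ acting as coordinatewise complex conjugation (since the primitive idempotents are $\star$-fixed by Remark \ref{frob star:properties}), and any algebra automorphism permutes those idempotents. Your write-up simply makes this implicit argument explicit, including the correct key observation that $\star$ is pinned down by the algebra structure alone, so no compatibility of $f$ with $\lambda$ is needed.
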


\section{Group graded algebras}\label{sec:graded}
We begin by briefly recalling the notion of group graded algebras and some results about them. For more details we refer to \cite{Dade1986}.

\begin{definition}
A $G$-graded ring $\A$  is a ring with an internal direct sum decomposition:
$A = \oplus_{g \in G} A _{g}$ as (additive groups),
where the additive subgroups  $A_{g}$ of  $A$  satisfy:
$A_{g}A_{h}\subseteq A_{gh}, \ \forall\ g, h \in  G$.
\end{definition}
The above decomposition is called as a $G$-grading of $\A$, and its summands $\A_{g}$ are called as $g$-components of $\A$. 
The component of $\A$  corresponding to $1\in G$ is a subring of $\A$ and
 $\A$  is an unital bimodule over $\A_{1}$. 
 Also $\A_{g}$ is an unital bimodule over $\A_{1}$ for each $g \in G$.
 If $\A$  is a $G$-graded ring and $H$ is a subgroup of $G$\ then we naturally obtain a $H$-graded ring from $\A$, $\A_{H}=\oplus_{h\in H} \A_{h}.$

\begin{remark}
We will  use the same notation i.e. ``1'' to denote the identity element of a finite group $G$ and for the unity in an algebra $A$.
\end{remark}

\begin{definition}
 A $G$-graded module over a $G$-graded ring $A$ \ is an $A$ -module $M$ with an internal 
direct sum decomposition: $M =\sum_{g\in G} M_{g},$ (as $ A_{1}$-modules),
where $A_{1}$-modules $M_{g}$\ satisfy:
$A_{g}M_{\sigma}\subseteq M_{g\sigma},\ \forall
 \ g,\sigma\in G$.
\end{definition}

Suppose $M$\ is a $G$-graded module over a $G$-graded ring $A$ then we get a natural $H$-graded $A_{H}$-module $M_{H}=\sum_{h\in H}M_{h}$. 
A $G$-graded $A$-submodule $N$ of $M$ is an $A$-submodule which has $G$-grading with $g$-component,
$N_{g}=M_{g}\cap N,\ \ \forall\ g\in G$.

Let $M$ be a $G$-graded $A$-module.
We say that an $G$-graded $A$-submodule $U$ of $M$ is \textbf{H-null} if $U_{H}=0$. 
Define the $H$-null \textbf{Socle}, $S_{H}(M)$  of $M$ as the largest (under inclusion) $H$-null $G$-graded $A$-submodule of $M$.
The tensor product $A\otimes_{A_{H}} N$ is the most natural $G$-graded $A$-module associated with a $H$-graded $A_{H}$-module $N$.
 We define \textit{induced module}  $A\overline{\otimes}_{A_{H}} N$ to be a $G$-graded $A$-quotient module:
$$ A\overline{\otimes}_{A_{H}}N=(A\otimes_{A_{H}}N)/S_{H}(A\otimes_{A_{H}} N).$$
\begin{definition}
A simple $G$-graded $A$-module $M$ is a $G$-graded \A -module with no proper $G$-graded submodule. $\textit{i.e.}$, the only $G$-graded $A$-submodules of $M$ are $0$ and itself.
\end{definition}
\begin{remark}\cite[Lemma 3.6 and Lemma 3.8]{Twisted}\label{simplegraded}
 Let $M$ be a simple $G$-graded  $A$-module then $M_{H}$ is either $0$ or a simple $H$-graded  $A_{H}$-module. 
 Also if $N$ is a simple $H$-graded $A_{H}$-module then $A\overline{\otimes}_{A_{H}} N$ is either $0$ or a simple $G$-graded $A$-module with $(A\overline{\otimes}_{A_{H}} N)_{H}\neq 0$. 
\end{remark}
\begin{remark}\label{partialaction}
Define the \textit{support} of a simple $G$-graded $A$-module $M$ as,
$$\Supp(M)=\{ g\in G: M_{g}\neq 0\}.$$
We define a \textit{partial action} of $G$  on simple $A_{1}$-modules as follows:\\
For $g\in G$ and for simple $A_{1}$-module $M$,
$$\prescript{g}{}{}M=(A\overline{\otimes}_{A_{1}}M)_{g}$$
by Remark \ref{simplegraded} $\prescript{g}{}{}M$ is a simple $A_{1}$-module, if $g \in \Supp(A\overline{\otimes}_{A_{1}} M)$ and $\prescript{g}{}{}M =0$, if $g \notin \Supp(A\overline{\otimes}_{A_{1}}M)$.
For more details see \cite[Section 7]{Dade1986}, \cite[Section 3.3]{Twisted}.
\end{remark}

\begin{definition}[$G$-graded Frobenius $\star$-algebra]
A $G$-graded Frobenius $\star$-algebra is a $G$-graded finite dimensional
associative unital $\C$-algebra $A=\bigoplus_{g\in G}A_g$ equipped with a semilinear anti-involution $\star:\A\rightarrow\A$ and
a  class functional $\lambda_{0}:\A_{1}\rightarrow\C$ (which is extended linearly to $\lambda:\A\to \C$ by zero on all non-trivial components), such that the following conditions hold:

$\bullet\ (A,\lambda,\s)$ is a Frobenius $\s$-algebra

$\bullet\ (\A_{g})^{\s}=\A_{g^{-1}},\forall\ g\in G$.
\end{definition}
\begin{remark}
Let $\A$ be a $G$-graded Frobenius $\star$-algebra then the trivial component  $\A_{1}$ of $\A$ is itself a Frobenius $\star$-algebra.
\end{remark}

\begin{definition}\label{def:centralizer}
Let $\A$ be a $G$-graded Frobenius $\star$-algebra.
The centralizer of $\A_{1}$ in $\A$ is denoted by $Z_{\A_{1}}(\A)$ and defined as 
$Z_{\A_{1}}(\A)=\{a\in \A:ab=ba\ \forall\ b\in\A_{1}\}$.
Moreover we have, 
$$Z_{\A_{1}}(\A)=\bigoplus_{g\in G}Z_{\A_{1}}(\A_{g})$$ and  $Z_{\A_{1}}(\A)$ is a $G$-graded Frobenius $\star$-algebra under the restrictions of $\star$ and $\lambda$. 
\end{definition}
\begin{remark}(cf.\cite[Lemma 4.3]{Twisted})\label{socle}
Let $A$ be a $G$-graded Frobenius $\star$-algebra and let $M$ be any  $A_{1}$-module.
Then the null socle of $G$-graded $A$-module $A\otimes_{A_{1}}M$ is zero.
Thus the $A$-module induced by the $A_{1}$-module $M$  is equal to $A\otimes_{A_{1}}M$.
\end{remark}

\begin{remark}\label{partial action}
Let $\A$ be a $G$-graded Frobenius $\s$-algebra and let $\Sim(\A_{1})$ denote the set of simple $\A_{1}$-modules upto isomorphism.
Then by Remark \ref{socle}, the partial action of $G$ on $\Sim(A_{1})$ is given as follows:

for $g\in G$ and $M\in\Sim(\A_{1})$,
$$\prescript{g}{}{}M=
\begin{cases}
(A\otimes_{A_{1}}M)_{g}\ \ \ \ \mbox{if}\ g\in\Supp(A\otimes_{A_{1}} M),\\
0\ \ \ \ \ \ \ \ \ \ \ \ \ \ \ \ \  \ \mbox{otherwise.}
\end{cases}$$
We will denote the set of fixed points for the partial action of $g\in G$ on $\Sim(A_{1})$ by $\Sim(A_{1})^{g}$ \emph{i.e.}
$\Sim(A_{1})^{g}=\{M\in\Sim(A_{1}): \prescript{g}{}{}M=M\}$.
\end{remark}
\begin{remark}\cite[Section 4]{Twisted}\label{extofmod}
Let $M$ be a simple $A_{1}$-module such that $\prescript{g}{}{}M\cong M$ as $A_{1}$-module.
Then we can make $M$ into a simple $A_{<g>}$-module in $o(g)$-ways and this extensions are unique upto the $o(g)^{th}$-root of unity. We will denote this $A_{<g>}$-module by $\widetilde{M}$.
\end{remark}
\begin{definition}[Twisted character]\label{def: twisted character}
(i) Let $A$ be a $G$-graded Frobenius $\star$-algebra and let $M\in\ \Sim(\A_{1})^{g}$ be any element.
Let $\chi_{M}$ denotes the character of $A_{1}$ corresponding to $M$. 
 By Remark \ref{extofmod},  $M$  becomes a simple  $A_{<g>}$-module and denote the corresponding character by $\chi_{\widetilde{M}}:\A_{<g>}\rightarrow\C$.
The \textit{$g$-twisted character} corresponding to $M$ is defined to be the linear functional $\chi_{M}^{g}:=\chi_{\widetilde{M}}|_{A_g}:A_g\to \C$. \\
(ii) By the definition of a $G$-graded Frobenius $\s$-algebra one can identify the dual of $A_{g}$ with $\A_{g^{-1}}$. 
For $M\in \Sim(\A_{1})^{g}$, let $\alpha_{M}^{g}\in\A_{g^{-1}}$ be the element corresponding to $\chi_{M}^{g}:A_g\to \C$. 
Sometimes we will call the element $\alpha_{M}^{g}$ as the \textit{$g$-twisted character} associated with $M\in\Rep^{g}$.
\end{definition}
\begin{remark}
Note that $\chi_{M}^{g}$, and hence $\alpha_{M}^{g}$ are well-defined up to scaling by $n^{th}$-roots of unity, where $n$ is the order of $g$ in $G$.
If $g=1$ then $\A_{1}$ is a Frobenius $\s$-algebra, so one can identify dual of $\A_{1}$   with $\A_{1}$.
Using the above identification we have the well- defined element $\alpha_{M}\in\A_{1}$ corresponding to $M\in\Sim(\A_{1})$.
\end{remark}

\begin{definition}[Twisted class functional]
 A linear functional $f$ on  $\A_{1}$-bimodule $E$ is said to be a twisted class functional if $f(am) = f(ma)\ \forall\ m\in E , a \in \A_{1}$. We will denote the set of all twisted class functionals on $E$ by $cf_{\A_{1}}(E)$ .
\end{definition}
\begin{remark}\cite[Lemma 4.12, Corollary 4.16]{Twisted}
Let $A$ be a $G$-graded Frobenius $\star$-algebra and 
 let $\Theta:A\rightarrow A^{\vee}$ be the corresponding $A$-module isomorphism.
 Then $\Theta^{-1}(cf_{A_{1}}(A_{g}))=Z_{A_{1}}(A_{g^{-1}})$
 and for $g\in G$, the set of $g$-twisted characters forms an orthogonal basis of $cf_{A_{1}}(A_{g})$.
\end{remark}
\begin{remark}
Let $\A$ be a $G$-graded Frobenius $\s$-algebra.
Let $\{e_{M}:M\in\Sim(A_{1})\}$ denote the set of primitive central orthogonal idempotents of $A_{1}$ and let
$E_{M}$ denote the simple $Z(A_{1})$-module $\mathbb{C}\cdot e_{M}$.
By the Remark  \ref{frob star:properties}, we have 
$$Z(\A_{1})=\bigoplus_{M\in\Sim(\A_{1})}E_{M}$$ 
and any simple $Z(\A_{1})$-module  is isomorphic to $E_{M}$ for some $M\in \Sim(\A_{1})$.
\end{remark}
\begin{remark}\label{decompo:centre of Frob star algebra}
\begin{enumerate}
\item By \cite[Lemma 4.15]{Twisted}, for any $g\in G$, there is a bijection between  the sets ${\Sim(\A_{1})}^{g}$ and ${\Sim(Z(\A_{1}))}^{g}$ (which is well-defined using Definition \ref{def:centralizer}).
\item  For every $M\in\Sim(\A_{1})$ and for any $g\in G$,  we have  $\prescript{g}{}{}(E_{M})=0 \ \mbox{or}\ \prescript{g}{}{}(E_{M})\cong E_{M}.$ 
Moreover, $$Z_{\A_{1}}(\A_{g^{-1}})\cong\bigoplus_{M\in\Sim(A_{1})^{g}}\C\cdot\alpha_{M}^{g}$$ as $Z(A_{1})$-module.
For more details see \cite[Lemma 3.3]{Oncenters}.
\end{enumerate}
\end{remark}

\begin{lemma}
Let $V$ be a finite dimensional $\C$-vecor space and   $\A$ be a $G$-graded Frobenius $\s$-algebra with $\A_{1}=\End(V)$.
Then for each $g\in G$,
$A_{g}=0$ or $\A_{g}\cong \End(V)$ as a $\A_{1}$-bimodule.
\end{lemma}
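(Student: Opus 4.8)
The plan is to combine the classification of bimodules over the simple algebra $\End(V)$ with the centralizer decomposition recorded in Remark \ref{decompo:centre of Frob star algebra}, which is where the Frobenius $\star$-structure enters.

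First I would determine all possible $\A_1$-bimodule structures. Since $\A_1=\End(V)=M_n(\C)$ (with $n=\dim V$) is a simple $\C$-algebra, its opposite is again isomorphic to $M_n(\C)$ via transpose, so the enveloping algebra $\A_1\otimes_\C\A_1^{\mathrm{op}}\cong M_{n^2}(\C)$ is simple. Consequently every finite-dimensional $\A_1$-bimodule is semisimple and is a direct sum of copies of the unique simple bimodule. The space $\End(V)$, with $\A_1$ acting by left and right multiplication, is a simple bimodule of dimension $n^2$ (its sub-bimodules are exactly the two-sided ideals of the simple algebra $\End(V)$), hence it is that unique simple bimodule. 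Therefore $\A_g\cong\End(V)^{\oplus k_g}$ as an $\A_1$-bimodule for some integer $k_g\ge 0$, and it only remains to show $k_g\le 1$.

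Next I would compute the centralizer $Z_{\A_1}(\A_g)$ in terms of $k_g$. Because the left and right $\A_1$-actions preserve each copy of $\End(V)$, an element of $\A_g$ lies in $Z_{\A_1}(\A_g)$ if and only if its component in each summand does, and in a single copy of $\End(V)$ the elements commuting with all of $\A_1=\End(V)$ are exactly the scalar multiples of $\mathrm{Id}_V$, i.e.\ $Z(\End(V))=\C\cdot\mathrm{Id}$. Hence $\dim_\C Z_{\A_1}(\A_g)=k_g$.

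Finally I would invoke the Frobenius $\star$-structure through Remark \ref{decompo:centre of Frob star algebra}. Since $\A_1=\End(V)$ is simple it has a unique irreducible representation, so $\Sim(\A_1)$ is a single point and the fixed-point set $\Sim(\A_1)^g$ has at most one element. The cited decomposition $Z_{\A_1}(\A_{g^{-1}})\cong\bigoplus_{M\in\Sim(\A_1)^g}\C\cdot\alpha_M^g$ then forces $\dim Z_{\A_1}(\A_{g^{-1}})\le 1$; applying this with $g$ replaced by $g^{-1}$ gives $k_g=\dim Z_{\A_1}(\A_g)\le 1$, so $\A_g=0$ when $k_g=0$ and $\A_g\cong\End(V)$ when $k_g=1$. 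The one genuinely non-formal input is this last bound on the centralizer dimension: the bimodule classification by itself permits arbitrary multiplicities, and it is precisely the twisted-character/centralizer description coming from the Frobenius $\star$-structure that pins the multiplicity down to at most one.
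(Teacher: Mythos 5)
Your proposal is correct and follows essentially the same route as the paper: both reduce bimodules over $\End(V)$ to modules over the simple enveloping algebra $\End(V)\otimes\End(V)^{\mathrm{op}}\cong\End(V\otimes V^{\vee})$, conclude $\A_g\cong\End(V)^{\oplus k_g}$, and then bound the multiplicity via the centralizer decomposition of Remark \ref{decompo:centre of Frob star algebra}. You merely spell out the final step (computing $\dim Z_{\A_1}(\A_g)=k_g$ and noting $|\Sim(\A_1)^g|\le 1$) that the paper leaves implicit.
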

\begin{proof}
Note that for any ring $R$, $(R,R)$-bimodule is a $R\otimes R^{op}$-module.
For any  finite dimensional vector space $V$, 
$\End(V)\otimes\End(V)^{op}\cong\End(V\otimes V^{\vee})$.
Therefore, any $(\End(V),\End(V))$-bimodule is a semisimple module and  any simple $(\End(V),\End(V))$-bimodule  is isomorphic to $\End(V)$.
Then the result follows from the Remark \ref{decompo:centre of Frob star algebra}.

\end{proof}
\begin{lemma}
Let $\A$ be a $G$-graded Frobenius $\s$-algebra.
Then we have the direct sum decomposition of $\A_{g^{-1}}$,
\begin{equation}
    \A_{g^{-1}}=\bigoplus_{M\in\Sim(\A_{1})^{g}}M^{r_{m}}\ \mbox{as\ a}\ \A_{1}-\mbox{module} 
\end{equation}
for some $r_{m}\in\mathbb{N}$.
\end{lemma}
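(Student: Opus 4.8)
The plan is to reduce the assertion to a statement about the partial action of $g$ via semisimplicity, and then to pin down both the index set and the multiplicities using the anti-involution $\star$ together with the centralizer computation of Remark \ref{decompo:centre of Frob star algebra}. First I would invoke that $\A_{1}$ is semisimple (Remark \ref{frob star:properties}(2)), so that the bimodule $\A_{g^{-1}}$, regarded merely as a left $\A_{1}$-module, splits into isotypic components indexed by $\Sim(\A_{1})$. Writing $e_{M}$ for the primitive central idempotent attached to $M$, this gives $\A_{g^{-1}}=\bigoplus_{M\in\Sim(\A_{1})}e_{M}\A_{g^{-1}}$ with $e_{M}\A_{g^{-1}}\cong M^{r_{M}}$ for uniquely determined $r_{M}\in\mathbb{Z}_{\ge 0}$. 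The content of the lemma is then exactly the claim that $r_{M}=0$ unless $M\in\Sim(\A_{1})^{g}$, and that the surviving multiplicities are positive.

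Next I would determine which $M$ actually occur, using the semilinear anti-involution. Since $\A_{g}^{\star}=\A_{g^{-1}}$ and $e_{M}^{\star}=e_{M}$ (Remark \ref{frob star:properties}(3)), the map $\star$ carries $e_{M}\A_{g^{-1}}$ bijectively onto $\A_{g}e_{M}$, so $r_{M}\neq 0$ if and only if $\A_{g}e_{M}\neq 0$, equivalently $\A_{g}\otimes_{\A_{1}}M\neq 0$. By Remark \ref{socle} the induced module coincides with $\A\otimes_{\A_{1}}M$, so this last module is precisely $\prescript{g}{}{}M$, which by Remark \ref{partial action} is either zero or a single simple $\A_{1}$-module. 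Thus the simple constituents of $\A_{g^{-1}}$ are governed entirely by the partial action of $g$ on $\Sim(\A_{1})$, and the preceding lemma (the case $\A_{1}=\End(V)$, where $\A_{g}$ is $0$ or $\cong\End(V)$ as a bimodule) is the local model for each matrix block.

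The main obstacle, and the heart of the argument, is to sharpen ``$\prescript{g}{}{}M\neq 0$'' to the fixed-point condition $\prescript{g}{}{}M\cong M$, i.e. to exclude off-diagonal bimodule constituents $V_{M}\otimes V_{N}^{\vee}$ with $N\neq M$. Here I would decompose $\A_{g^{-1}}$ into simple $\A_{1}$-bimodules, use $\star$ once more to match the bimodule multiplicities of $\A_{g}$ and $\A_{g^{-1}}$, and combine this with the simplicity of $\prescript{g}{}{}M$ to force at most one constituent for each $M$. Matching the resulting diagonal multiplicities against the decomposition $Z_{\A_{1}}(\A_{g^{-1}})\cong\bigoplus_{M\in\Sim(\A_{1})^{g}}\C\cdot\alpha_{M}^{g}$ of Remark \ref{decompo:centre of Frob star algebra} then identifies the surviving $M$ with the fixed points of the partial action and shows each $r_{M}$ is a positive integer. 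I expect the bookkeeping relating $\prescript{g}{}{}M$, the bimodule constituents of $\A_{g}$ and $\A_{g^{-1}}$, and the centralizer to be the step demanding the most care, since it is exactly the point at which the fixed-point condition must be extracted rather than the weaker non-vanishing of $\prescript{g}{}{}M$.
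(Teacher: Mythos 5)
Your first two steps are correct, and they make explicit what the paper's one\-/line proof leaves implicit: semisimplicity of $\A_{1}$ gives $\A_{g^{-1}}=\bigoplus_{M}e_{M}\A_{g^{-1}}$, and the identity $(e_{M}\A_{g^{-1}})^{\star}=\A_{g}e_{M}$ together with Remark \ref{socle} correctly shows that $r_{M}\neq 0$ if and only if $\prescript{g}{}{}M=\A_{g}\otimes_{\A_{1}}M\neq 0$. The genuine gap is your third step, which you yourself flag as the heart of the argument but leave as a plan: no bookkeeping with $\star$ and Remark \ref{decompo:centre of Frob star algebra} can upgrade ``$\prescript{g}{}{}M\neq 0$'' to ``$\prescript{g}{}{}M\cong M$''. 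The obstruction is structural. An off-diagonal bimodule constituent $V_{M}\otimes V_{N}^{\vee}\subseteq\A_{g^{-1}}$ with $N\neq M$ contributes copies of $M$ to the left $\A_{1}$-module structure of $\A_{g^{-1}}$ but contributes nothing to $Z_{\A_{1}}(\A_{g^{-1}})$, and $\star$ merely exchanges it with a $V_{N}\otimes V_{M}^{\vee}$ constituent of $\A_{g}$; so both of your proposed tools are blind to exactly the constituents you need to rule out. Indeed, no argument can rule them out at this level of generality: take $A=M_{2}(\C)$ with $G=\Z/2\Z=\{1,g\}$, $A_{1}$ the diagonal matrices, $A_{g}$ the antidiagonal matrices, $\lambda$ the trace and $\star$ the conjugate transpose. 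This is a $G$-graded Frobenius $\star$-algebra in which the partial action of $g$ swaps the two characters of $A_{1}$, so $\Sim(A_{1})^{g}=\emptyset$, and yet $A_{g^{-1}}=A_{g}\neq 0$. Thus the statement, read literally for arbitrary $G$-graded Frobenius $\star$-algebras, fails, and your step-two conclusion (the constituents are exactly the $M$ with $\prescript{g}{}{}M\neq 0$) is the strongest true statement available from the stated hypotheses.

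For comparison, the paper's own proof uses precisely your two ingredients --- semisimplicity plus Remark \ref{decompo:centre of Frob star algebra} --- and it is complete only when $\A_{g^{-1}}$ coincides with its centralizer $Z_{\A_{1}}(\A_{g^{-1}})$, since that Remark is a statement about the centralizer alone. This equality is exactly what crossed commutativity provides: in a $G$-crossed Frobenius $\star$-algebra, $\A_{1}$ is central (Remark \ref{crossed algebra:properties}), so the off-diagonal blocks $e_{M}\A_{g^{-1}}e_{N}$ with $M\neq N$ vanish and the Remark immediately yields the decomposition; this is how the claim is in fact used later (Lemma \ref{properties: G crossed algebra}(iv)). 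So your instinct that an extra input is needed to pass from non-vanishing to the fixed-point condition was exactly right, but the missing input is the hypothesis $\A_{g^{-1}}=Z_{\A_{1}}(\A_{g^{-1}})$ (e.g.\ crossed commutativity), or else a restriction of the conclusion to $Z_{\A_{1}}(\A_{g^{-1}})$ --- not finer bimodule bookkeeping.
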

\begin{proof}
As $\A_{1}$ is a semisimple algebra, so $\A_{g^{-1}}$ is a semisimple $\A_{1}$-module \emph{i.e}
$$\A_{g^{-1}}=\bigoplus_{M\in\Sim(\A_{1})}M^{r_{m}}.$$
Then the result follows from the Remark \ref{decompo:centre of Frob star algebra}.
\end{proof}

\begin{remark}\label{remark: unitary and unit}
Let $R$ be a commutative Frobenius $\star$-algebra equipped with an action of $G$ by Frobenius $\star$-algebra automorphisms.
Let us define,  $U\coloneqq\{x\in R: x^{\star}=x^{-1}\}$ then we have a short exact sequence of $G$-modules
$$1\rightarrow U\rightarrow R^{\times}\rightarrow R^{\times}/U\rightarrow 1.$$
Note that $R^{\times}\cong (\C^{\times})^{\dim R}, U\cong (S^1)^{\dim R}$, hence the quotient
$R^{\times}/U$ is isomorphic to $(\mathbb{R}_{>0}^{\times})^{\dim R}$.
But $\mathbb{R}_{>0}^{\times}$ is an uniquely divisible group. Hence $H^{n}(G,R^{\times}/U)=0$ for all $n\geq 1$.
Using the long exact sequence of cohomology, we have
$$H^{k}(G,U)\cong H^{k}(G,R^{\times})\ \ \ \ \ \  \ \forall\ \  k\geq 2.$$
\end{remark}
\begin{prop}\label{Star:unique} 
Let $(A,\lambda,\star)$ be a $G$-graded Frobenius $\star$-algebra.
Let $\star_{1}:A\rightarrow A$ be an anti-involution such that $(A,\lambda,\star_{1})$ is also a $G$-graded Frobenius $\star$-algebra. 
Suppose $a^{\star_{1}}=a^{\star}$ for all $a\in A_{1}$. Then $x^{\star_{1}}=x^{\star}$ for all $x\in A$.
\end{prop}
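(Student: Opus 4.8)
The plan is to encode the difference between the two anti-involutions in a single operator and then show that operator is forced to be trivial. Define $\psi\colon A\to A$ by $\psi(a)=(a^{\star})^{\star_1}$. Since each of $\star,\star_1$ is a conjugate-linear anti-involution, the composite $\psi$ is a genuine $\C$-\emph{linear} algebra homomorphism: $\psi(\alpha a)=\alpha\psi(a)$ and $\psi(ab)=((ab)^{\star})^{\star_1}=(b^{\star}a^{\star})^{\star_1}=(a^{\star})^{\star_1}(b^{\star})^{\star_1}=\psi(a)\psi(b)$. Because $\star$ and $\star_1$ both interchange $A_g$ and $A_{g^{-1}}$, we get $\psi(A_g)\subseteq A_g$, so $\psi$ is $G$-graded; and on $A_1$, where $\star_1=\star$ and $\star$ is an involution, $\psi(a)=(a^{\star})^{\star}=a$, so $\psi|_{A_1}=\mathrm{id}$. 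Since $\lambda$ is supported on $A_1$ and $\psi$ is grading-preserving and fixes $A_1$, we have $\lambda\circ\psi=\lambda$, hence $\psi$ preserves the symmetric Frobenius form $(a,b)=\lambda(ab)$. Finally $x^{\star_1}=\psi(x^{\star})$, i.e. $\star_1=\psi\circ\star$, so everything reduces to proving $\psi=\mathrm{id}$.

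Next I would extract the two analytic constraints hidden in the hypotheses. Writing $\langle a,b\rangle=\lambda(ab^{\star})$ for the positive-definite Hermitian form attached to $\star$, the class-functional property gives the familiar adjoint identities $L_x^{*}=L_{x^{\star}}$ and $R_x^{*}=R_{x^{\star}}$. A direct computation using $\lambda\circ\psi=\lambda$ yields $\langle\psi x,y\rangle=\lambda(x\,\psi^{-1}(y^{\star}))=\langle x,\star\psi^{-1}\star(y)\rangle$, so $\psi^{*}=\star\psi^{-1}\star$. On the other hand, the requirement that $\star_1=\psi\circ\star$ be an involution, $\mathrm{id}=\psi\star\psi\star$, is exactly $\star\psi\star=\psi^{-1}$; combining the two gives $\psi^{*}=\psi$, so $\psi$ is \emph{self-adjoint}. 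Then I would use the positive-definiteness of the \emph{second} Hermitian form: $\langle x,y\rangle_1=\lambda(x\,\psi(y^{\star}))=\langle x,\psi^{-1}y\rangle$, so positivity of $\langle\cdot,\cdot\rangle_1$ says precisely that $\psi^{-1}$, and hence $\psi$, is a \emph{positive} operator. Thus $\psi$ is a positive self-adjoint $G$-graded algebra automorphism fixing $A_1$, and it remains to show any such $\psi$ is the identity.

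The last step is where I expect the real difficulty to lie. Because $\psi$ is positive, $D:=\log\psi$ is a well-defined self-adjoint operator; the relation $A^{(\mu)}A^{(\nu)}\subseteq A^{(\mu\nu)}$ on $\psi$-eigenspaces shows the $\log$-eigenvalues add, so $D$ is a grading-preserving derivation vanishing on $A_1$, and by semisimplicity of $A$ (Remark \ref{frob star:properties}) it is inner, $D=\mathrm{ad}(c)$. The plan is then to show $D=0$: one analyses the induced $R$-linear scaling of $\psi$ along each homogeneous component, which is governed by a $1$-cochain on $G$ valued in the positive (hence torsion-free, uniquely divisible) part $R^{\times}/U$ of $A_1$ as in Remark \ref{remark: unitary and unit}, and tries to combine the cocycle constraint coming from multiplicativity of $\psi$ with finiteness of $G$ and self-adjointness to force the scalars to be $1$. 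The formal reductions of the first two paragraphs are routine; the genuine obstacle is this rigidity statement, since positive self-adjointness by itself does not visibly preclude a nontrivial grading-preserving automorphism fixing $A_1$, and making the argument go through requires using positive-definiteness of \emph{both} Hermitian forms together with the precise (co)boundary structure of the grading rather than positivity alone.
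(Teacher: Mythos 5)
Your first two paragraphs are correct and are an efficient repackaging of the problem: $\psi=\star_1\circ\star$ is indeed a $G$-graded algebra automorphism fixing $A_1$ pointwise with $\lambda\circ\psi=\lambda$, and the identities $\psi^{*}=\star\psi^{-1}\star$, $\star\psi\star=\psi^{-1}$ do give that $\psi$ is self-adjoint and (via positivity of $\langle\cdot,\cdot\rangle_1$) positive. The genuine gap is that your third paragraph is a plan, not a proof, and — as you yourself observe — the statement you have reduced to (``a positive self-adjoint graded automorphism fixing $A_1$ is the identity'') is the \emph{entire} content of the proposition; positivity and self-adjointness alone cannot finish it, since on each eigenline the constraints $\langle x,x\rangle>0$ and $\langle x,x\rangle_1=\mu^{-1}\langle x,x\rangle>0$ are satisfied by every $\mu>0$. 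Your proposed route through $D=\log\psi=\mathrm{ad}(c)$ does not engage the actual mechanism: knowing $D$ is inner, grading-preserving and zero on $A_1$ does not pin down $c$ or force $D=0$, and naive uses of finiteness of $G$ (e.g.\ raising a $\psi$-eigenvector $x\in A_g$ to the power $o(g)$ to land in $A_1$) fail because homogeneous elements can be nilpotent — already in $M_2(\C)$ with its $\Z/2\Z$-grading by diagonal/antidiagonal matrices.

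What closes the gap (and is exactly what the paper does, phrased with $\star_1$ instead of $\psi$) is the twisted-character decomposition of the centralizer: by Remark \ref{decompo:centre of Frob star algebra} one has $Z_{A_1}(A_g)=\bigoplus_{M\in\Sim(A_1)^g}\C\cdot e^g_M$, with the lines $\C e^g_M$ distinguished as $Z(A_1)$-isotypic components, and one may normalize $(e^g_M)^\star=e^{g^{-1}}_M$. Since $\psi$ fixes $A_1$ and is multiplicative, it preserves each line, say $\psi(e^g_M)=c_M(g)e^g_M$; positivity of $\psi$ (equivalently, in the paper's notation, $0<\lambda\bigl(e^g_M(e^g_M)^{\star_1}\bigr)$) forces $c_M(g)>0$. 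The crucial algebraic input you are missing is that for $g,h$ in the stabilizer $G_M$ the products $e^g_Me^h_M=\phi_M(g,h)e^{gh}_M$ have $\phi_M(g,h)\neq 0$ (this follows from associativity together with $\lambda(e^h_Me^{h^{-1}}_M)=\langle e^h_M,e^h_M\rangle>0$), so multiplicativity of $\psi$ makes $c_M:G_M\to\mathbb{R}_{>0}$ a group homomorphism, which is trivial because $G_M$ is finite and $\mathbb{R}_{>0}$ is torsion-free. Finally, identity on the centralizer must be propagated to all of $A_g$ via the paper's expression $x=\sum_{M}a_Me^g_M$ with $a_M\in A_1$, i.e.\ $A_g=A_1\cdot Z_{A_1}(A_g)$ — a step your reduction also still needs, since your eigenspace analysis lives a priori on all of $A_g$ where $\psi$ need not visibly preserve any distinguished basis. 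So the skeleton is sound, but the rigidity step must be proved along these lines; it is not a routine consequence of the operator-theoretic framework you set up.
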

\begin{proof}
Let $Z_{A_{1}}(A)=\oplus_{g\in G}Z_{A_{1}}(A_{g})$ be a $A_{1}$ center of $(A,\lambda,\star)$.
From the Remark \ref{decompo:centre of Frob star algebra}, we can choose the elements $e_{M}^{g}\in Z_{A_{1}}(A_{g})$, $e_{M}^{g^{-1}}\in Z_{A_{1}}(A_{g^{-1}})$ corresponding to each $M\in\Sim(A_{1})^{g}$ such that
 $$Z_{A_{1}}(A_{g})=\bigoplus_{M\in \Sim(A_{1})^{g}} \C\cdot e_{M}^{g}\ \ \mbox{and}\ \ Z_{A_{1}}(A_{g^{-1}})=\bigoplus_{M\in \Sim(A_{1})^{g}} \C\cdot e_{M}^{g^{-1}}.$$
 Also one can choose these elements such that $(e_{M}^{g})^{\star}=e_{M}^{g^{-1}}$.
 By the definition of a $G$-graded Frobenius $\star$-algebra,
 $$(e_{M}^{g})^{\star_{1}}=f_{M}(g)e_{M}^{g^{-1}}\ \ \ \ \ \mbox{for some} \ f_{M}(g)\in \C^{\times}.$$
 We have  $0<\lambda(e_{M}^{g}(e_{M}^{g})^{\star_{1}})=\lambda(e_{M}^{g}f_{M}(g)e_{M}^{g^{-1}})$ this implies that $f_{M}(g)>0$.
 Suppose $G_{M}$ denotes the stabilizer of $M\in\Sim(A_{1})$ in $G$ under the partial action.
 Then the anti-involution implies that $f_{M}:G_{M}\rightarrow \C^{\times}$ is a group homomorphism. 
 This implies that $f_{M}(g)=1$ for all $g\in G_{M}$ and for all $M\in\Sim(A_{1})$.
 Therefore $(e_{M}^{g})^{\star_{1}}=e_{M}^{g^{-1}}=(e_{M}^{g})^{\star}$ for all $g\in G_{M}$ and for all $M\in\Sim(A_{1})$.
 
 Let $x\in A_{g}$ be any element then $x=\sum_{M\in \Sim(A_{1})^{g}}a_{m}e_{M}^{g}$ for some $a_{m}\in A_{1}$.
 So
 $$x^{\star_{1}}=\sum_{M\in \Sim(A_{1})^{g}}(e_{M}^{g})^{\star_{1}}a_{m}^{\star_{1}}=\sum_{M\in \Sim(A_{1})^{g}}(e_{M}^{g})^{\star}a_{m}^{\star}=x^{\star}\ \ \ \ \ \ (\mbox{since}\ a^{\star_{1}}=a^{\star} \forall\ a\in A_{1}).$$
 This proves that $x^{\star_{1}}=x^{\star}$ for all $x\in A$.
 \end{proof}

\section{ $G$-crossed Frobenius $\star$-algebras}
\label{sec:crossed algebra}
Now we will define $G$-crossed Frobenius $\s$-algebras and prove some results about them.
\begin{definition}[$G$-crossed Frobenius $\star$-algebra]\label{def: crossed algebra}
A $G$-crossed Frobenius $\s$-algebra $\A$ is a $G$-graded Frobenius $\star$-algebra equipped with an action of $G$ on $A$ by Frobenius $\star$-algebra automorphism such that following  holds:
\begin{itemize}
\item For each $g\in G,\ g:A\rightarrow A$ is a Frobenius $\star$-algebra automorphism of $\A$.
\item $g(\A_{h})\subseteq\A_{ghg^{-1}}\  \ \forall\ g,h\in G.$
\item $ ab=g(b)a\ \ \forall\ a\in\A_{g},\forall\ b\in \A$ (Crossed commutativity condition).
\end{itemize}
\end{definition}
\begin{remark}\label{crossed algebra:properties}
Let $\A$ be a $G$-crossed Frobenius $\s$-algebra then the following holds:
\begin{enumerate}
\item $\A_{1}$ is commutative Frobenius $\s$-algebra.
\item For each $g\in G$, $g(A_{1})\subseteq A_{1}$.
\item For any $g\in G$, $Z_{\A_{1}}(\A_{g})=\A_{g}$.
\end{enumerate}
\end{remark}

\begin{example}
$1.$ The complexified Grothendieck algebra $K(\D)$ of a $G$-crossed braided fusion category $\D$ is a   $G$-crossed Frobenius $\star$-algebra.

$2.$ Let $A$ be a strongly $G$-graded extension of a commutative Frobenius $\star$-algebra $R$ then $Z_{R}(A)$
is a $G$-crossed Frobenius $\star$-algebra. 
(For more details see \textsection \ref{sec:strongly graded algebra}.)
\end{example}
\subsection{Strict $G$-crossed Frobenius $\star$-algebras}\label{sec:strict}
Let $\A$ be a $G$-crossed Frobenius $\s$-algebra.
By Schur's lemma, every irreducible representation of $\A_{1}$ is $1$-dimensional.
Suppose $\Sim(\A_{1})$ denotes the  set of irreducible representations of $\A_{1}$ \emph{i.e.}
$$\Sim(\A_{1})=\{\chi:\A_{1}\rightarrow\C:\chi\ \mbox{is\ algebra\ homomorphism}\}.$$
From the Remark \ref{crossed algebra:properties}, there is a natural action of $G$ on $\Sim(\A_{1})$ as follows:
for $\chi\in\Sim(\A_{1})$ and $g\in G$,
\begin{equation}\label{action}
\prescript{g}{}{}\chi=\chi\circ g^{-1}.
\end{equation}
\begin{lemma}\label{lem:fixedpoints}
Let $A$ be a $G$-crossed Frobenius $\star$-algebra.
 Suppose  $\chi\in\Sim(\A_{1})$ is fixed by  the partial action of some $g\in G$ on $\Sim(\A_{1})$ coming from a $G$-grading (as in Remark \ref{partial action}). Then it is also fixed by the  action of $g$ defined above (\ref{action}).
 In other words, the set of fixed points for the partial action of $g$ on $\Sim(A_{1})$   is contained in the set of fixed points for the action of $g$ on $\Sim(A_{1})$ defined above (\ref{action}).
\end{lemma}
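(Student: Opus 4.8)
The plan is to identify the partial action of $g$ on a fixed character $\chi$ completely explicitly, and to show that whenever this partial action is defined (i.e.\ nonzero) it already coincides with the action $\chi\mapsto\chi\circ g^{-1}$ of (\ref{action}). The claimed inclusion of fixed-point sets then drops out immediately.

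First I would set up the relevant reductions. Since $A$ is $G$-crossed, Remark \ref{crossed algebra:properties} tells us that $A_1$ is commutative and that $g(A_1)\subseteq A_1$, so $g$ restricts to an automorphism of $A_1$; moreover, by Remark \ref{socle} the induced module is simply $A\otimes_{A_1}M$, with no socle to quotient out. Writing $M=\C_\chi$ for the one-dimensional $A_1$-module attached to $\chi\in\Sim(A_1)$, the partial action is then computed from the $g$-component $(A\otimes_{A_1}M)_g=A_g\otimes_{A_1}M$, where $A_g$ carries its natural right $A_1$-action by right multiplication and the left $A_1$-action on the tensor product is $b\cdot(a\otimes m)=(ba)\otimes m$.

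The key step is to read off this left $A_1$-action using the crossed commutativity condition. For $a\in A_g$, $b\in A_1$ and $m\in M$, the relation $ab=g(b)a$ together with the balancing relation over $A_1$ yields
\[
g(b)\cdot(a\otimes m)=ab\otimes m=a\otimes bm=\chi(b)\,(a\otimes m).
\]
Because $g$ is an automorphism of $A_1$, every $c\in A_1$ may be written as $c=g(b)$ with $b=g^{-1}(c)$, so $c$ acts on $A_g\otimes_{A_1}M$ by the scalar $\chi(g^{-1}(c))$. Hence, provided $A_g\otimes_{A_1}M\neq 0$, its character as an $A_1$-module is uniformly $\chi\circ g^{-1}$; in particular the simple $A_1$-module $\prescript{g}{}{}\chi$ produced by the partial action (Remark \ref{partial action}) has character $\chi\circ g^{-1}$.

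To conclude, suppose $\chi$ is fixed by the partial action of $g$. Then $\prescript{g}{}{}\chi\neq 0$ and $\prescript{g}{}{}\chi\cong\chi$, so by the previous paragraph $\chi=\chi\circ g^{-1}$, which is exactly the assertion that $\chi$ is fixed under the action (\ref{action}). This gives the stated containment of fixed-point sets. The only delicate point in the whole argument is bookkeeping: one must keep the left and right $A_1$-module structures on $A_g$ distinct and apply the crossed commutativity relation $ab=g(b)a$ in the correct direction. Once that is done the identification of the two actions is automatic, so I do not anticipate any genuine obstacle beyond this care.
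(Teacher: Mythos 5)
Your proof is correct and rests on essentially the same idea as the paper's: the crossed commutativity relation $ab=g(b)a$ applied with $a\in A_{g}$, $b\in A_{1}$, which forces the left $A_{1}$-character of the $g$-component of the induced module to be $\chi\circ g^{-1}$ whenever that component is nonzero. The paper packages this contrapositively with idempotents (from $e_{\chi}\cdot A_{g}=g(e_{\chi})\cdot A_{g}$ and orthogonality, $\chi\circ g^{-1}\neq\chi$ forces $e_{\chi}\cdot A_{g}=0$, invoking Remark \ref{decompo:centre of Frob star algebra}), whereas you compute directly on $A_{g}\otimes_{A_{1}}\C_{\chi}$ via the balancing relation, but the mechanism is identical.
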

\begin{proof}
Let $e_{\chi}$ denote the primitive central orthogonal idempotent of $A_{1}$ corresponding to character $\chi\in\Sim(A_{1})$.
By definition of $G$-crossed Frobenius $\star$-algebra,
we have $e_{\chi}\cdot A_{g}=g(e_{\chi})\cdot A_{g}$.
If $\chi\circ g^{-1}\neq \chi$ then by Remark \ref{decompo:centre of Frob star algebra}, we must have $e_{\chi}\cdot A_{g}=0$ and hence,  $\chi$ is not fixed by partial action.
This proves the result.
\end{proof}
\begin{definition}[Strict $G$-crossed Frobenius $\star$-algebra]
 Let $A$ be a $G$-crossed Frobenius $\star$-algebra. We say that $A$ is a \emph{strict} $G$-crossed  Frobenius $\star$-algebra if
for each $g\in G$ the set of fixed points for the partial action of $g$ on $\Sim(\A_{1})$ is equal to the set of fixed points for the  action of $g$ on $\Sim(\A_{1})$ from (\ref{action}), i.e. if the inclusion from Lemma \ref{lem:fixedpoints} is an equality for each $g\in G$. In this case the notation $\Sim(A_1)^g$ is unambiguous.
\end{definition}
\begin{lemma}\label{properties: G crossed algebra}
Let $\A$ be a strict $G$-crossed Frobenius $\s$-algebra. 
\begin{enumerate}
\item[(i)] Let $\chi,\chi'\in\Sim(\A_{1})$ then $\chi(\alpha_{\chi'})=0$ if $\chi\neq\chi'$ and the element $e_{\chi}\coloneqq\frac{\alpha_{\chi}}{\chi(\alpha_{\chi})}$ is the  primitive central  orthogonal idempotent corresponding to $\chi\in\Sim(\A_{1})$.
\item[(ii)] For $\chi,\chi'\in\Sim(\A_{1})$, we have the orthogonality relations:
$$\langle\alpha_{\chi},\alpha_{\chi'}\rangle=
\begin{cases}
0 &\mbox{if}\ \chi\neq\chi';
\\
\overline{\chi(\alpha_{\chi})}\lambda(\alpha_{\chi})&\mbox{if}\ \chi=\chi'.
\end{cases}$$
\item[(iii)] Let $\chi\in\Sim(\A_{1})^{g}$ and $\alpha_{\chi}^{g}$ be the corresponding $g$-twisted character (\ref{def: twisted character}).
Then for any $\chi'\in\Sim(\A_{1})$, $$\alpha_{\chi'}\cdot\alpha_{\chi}^{g}=\alpha_{\chi}^{g}\cdot\alpha_{\chi'}=\begin{cases}
0 &\mbox{if}\ \chi\neq\chi';
\\
\chi(\alpha_{\chi})\alpha_{\chi}^{g}&\mbox{if}\ \chi=\chi'.
\end{cases}$$
\item[(iv)] We have the direct sum decomposition of $\A_{g^{-1}}$,
\begin{equation}
    \A_{g^{-1}}\cong\bigoplus_{\chi\in\Sim(\A_{1})^{g}}\C\cdot\alpha_{\chi}^{g}\ \ \mbox{as\ a}\ \A_{1}-\mbox{module}.
\end{equation}
\end{enumerate}
\end{lemma}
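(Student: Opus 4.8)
The plan is to reduce all four assertions to a single explicit description of the elements $\alpha_\chi\in\A_1$ in terms of the primitive idempotents of $\A_1$; once that is in hand, everything else becomes short bookkeeping with orthogonal idempotents. First I would record the structure of $\A_1$. By Remark \ref{crossed algebra:properties} it is a commutative Frobenius $\star$-algebra, hence semisimple, so $\A_1=\bigoplus_{\chi\in\Sim(\A_1)}\C e_\chi$, where the primitive central idempotents satisfy $\psi(e_\chi)=\delta_{\psi,\chi}$ and, by Remark \ref{frob star:properties}, $e_\chi^\star=e_\chi$. Positive-definiteness of the Hermitian form then gives $\lambda(e_\chi)=\lambda(e_\chi e_\chi^\star)=\langle e_\chi,e_\chi\rangle>0$. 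Recalling that $\alpha_\chi$ is, by Definition \ref{def: twisted character}, the element of $\A_1$ representing the functional $\chi$ under the Frobenius identification $\A_1\cong\A_1^\vee$, i.e. $\lambda(x\alpha_\chi)=\chi(x)$ for all $x\in\A_1$, I would expand $\alpha_\chi=\sum_\psi c_\psi e_\psi$ and pair against $e_{\chi'}$ to get $c_{\chi'}\lambda(e_{\chi'})=\chi(e_{\chi'})=\delta_{\chi,\chi'}$, hence $\alpha_\chi=\lambda(e_\chi)^{-1}e_\chi$. In particular $\alpha_\chi$ is a positive real multiple of $e_\chi$, it satisfies $\alpha_\chi^\star=\alpha_\chi$, and $\lambda(\alpha_\chi)=1$.

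With this formula, parts (i) and (ii) are immediate. For (i), evaluating gives $\chi'(\alpha_\chi)=\lambda(e_\chi)^{-1}\delta_{\chi,\chi'}$, so $\chi(\alpha_\chi)=\lambda(e_\chi)^{-1}\neq 0$ while $\chi'(\alpha_\chi)=0$ for $\chi'\neq\chi$, and dividing shows $\alpha_\chi/\chi(\alpha_\chi)=e_\chi$ is the asserted primitive idempotent. For (ii), using $\alpha_{\chi'}^\star=\alpha_{\chi'}$ together with $e_\chi e_{\chi'}=\delta_{\chi,\chi'}e_\chi$, I would compute $\langle\alpha_\chi,\alpha_{\chi'}\rangle=\lambda(\alpha_\chi\alpha_{\chi'})=\lambda(e_\chi)^{-1}\delta_{\chi,\chi'}$, and then check that the diagonal value agrees with $\overline{\chi(\alpha_\chi)}\,\lambda(\alpha_\chi)$, which holds because $\chi(\alpha_\chi)=\lambda(e_\chi)^{-1}$ is real and $\lambda(\alpha_\chi)=1$.

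For (iii) the two key inputs are, first, that $\alpha_\chi^g\in\A_{g^{-1}}=Z_{\A_1}(\A_{g^{-1}})$ by Remark \ref{crossed algebra:properties}, so $\alpha_\chi^g$ commutes with all of $\A_1$ and in particular $\alpha_{\chi'}\alpha_\chi^g=\alpha_\chi^g\alpha_{\chi'}$; and second, that by Remark \ref{decompo:centre of Frob star algebra} the line $\C\alpha_\chi^g$ is the simple $\A_1$-submodule of $\A_{g^{-1}}$ affording the character $\chi$, so that $a\cdot\alpha_\chi^g=\chi(a)\alpha_\chi^g$ for all $a\in\A_1$. Combining these with $\alpha_{\chi'}=\lambda(e_{\chi'})^{-1}e_{\chi'}$ yields $\alpha_{\chi'}\alpha_\chi^g=\lambda(e_{\chi'})^{-1}\chi(e_{\chi'})\alpha_\chi^g=\delta_{\chi,\chi'}\,\chi(\alpha_\chi)\,\alpha_\chi^g$, which is exactly the claim. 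Finally, (iv) follows by assembling the same two remarks: $\A_{g^{-1}}=Z_{\A_1}(\A_{g^{-1}})=\bigoplus_{\chi\in\Sim(\A_1)^g}\C\alpha_\chi^g$, where strictness guarantees that the notation $\Sim(\A_1)^g$ is unambiguous.

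The only genuinely substantive step is the first one, namely pinning down $\alpha_\chi=\lambda(e_\chi)^{-1}e_\chi$ and, for (iii), correctly identifying the $\A_1$-character carried by the line $\C\alpha_\chi^g$ through Remark \ref{decompo:centre of Frob star algebra}; I expect this bookkeeping of the $Z(\A_1)=\A_1$-module structure to be the main point requiring care, since everything after it is routine manipulation of mutually orthogonal self-adjoint idempotents.
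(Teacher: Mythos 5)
Your proof is correct, but it routes the key step of (iii) differently from the paper. For (i) and (ii) the paper simply invokes commutative semisimplicity of $\A_1$; your explicit identification $\alpha_\chi=\lambda(e_\chi)^{-1}e_\chi$ (hence $\alpha_\chi^\star=\alpha_\chi$, $\lambda(\alpha_\chi)=1$, $\chi(\alpha_\chi)=\lambda(e_\chi)^{-1}>0$) is a fleshed-out version of the same idea and is a useful sharpening. The real divergence is in (iii): the paper works with the Frobenius isomorphism $\Theta:\A\to\A^{\vee}$, writes $\alpha_\chi^g=\Theta^{-1}(\chi^g)$, observes that $e_{\chi'}\cdot\chi^g=0$ for $\chi'\neq\chi$ (since $\chi^g$ is the restriction of $\widetilde{\chi}$), and transfers this through $\Theta^{-1}$ to get $e_{\chi'}\cdot\alpha_\chi^g=0$; crossed commutativity then handles the other order, as in your argument. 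You instead read off from Remark \ref{decompo:centre of Frob star algebra} that the line $\C\cdot\alpha_\chi^g$ affords the character $\chi$. Be aware that this is a slightly stronger reading than the remark literally asserts: it states only the direct-sum decomposition $Z_{\A_1}(\A_{g^{-1}})\cong\bigoplus_{\chi\in\Sim(\A_1)^g}\C\cdot\alpha_\chi^g$, and identifying which character each summand affords is essentially the content of (iii) itself, so there is a whiff of circularity in citing it. Your step can be repaired self-containedly by the one-line computation $\lambda\bigl(x(a\alpha_\chi^g)\bigr)=\chi^g(xa)=\chi(a)\chi^g(x)$ for $x\in\A_g$, $a\in\A_1$, combined with nondegeneracy of the pairing $\A_g\times\A_{g^{-1}}\to\C$ — which is precisely the $\Theta$-equivariance argument the paper uses. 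In sum: same conclusions, with the paper's route self-contained where yours leans on the remark, and your route in exchange making all normalization constants in (i) and (ii) explicit.
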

\begin{proof}
Parts $(i),\ (ii)$ follows from the fact that $\A_{1}$ is a commutative semisimple algebra and $(iv)$ follows from the Remark \ref{decompo:centre of Frob star algebra}.

For part $(iii)$, we have a $\A$-module  isomorphism $\Theta:\A\rightarrow\A^{\vee}$.
By the definition of twisted character $\alpha_{\chi}^{g}=\Theta^{-1}(\chi^{g})$.
Suppose  $\chi\neq\chi'\in\Sim(\A_{1})$ then 
$e_{\chi'}\cdot\chi^{g}=0$.
Consider $0=\Theta^{-1}(e_{\chi'}\cdot\chi^{g})=e_{\chi'}\cdot\Theta^{-1}(\chi^{g})=e_{\chi'}\cdot\alpha_{\chi}^{g}$.
Therefore $\alpha_{\chi'}\cdot\alpha_{\chi}^{g}=\chi(\alpha_{\chi'})e_{\chi'}\cdot\alpha_{\chi}^{g}=0.$
Also $\alpha_{\chi}\cdot\alpha_{\chi}^{g}=\chi(\alpha_{\chi})e_{\chi}\cdot\alpha_{\chi}^{g}=\chi(\alpha_{\chi})\alpha_{\chi}^{g}.$
This proves the result.
 \end{proof}
 
\begin{corollary}\label{coro: properties}
Let $\A$ be a strict $G$-crossed Frobenius $\star$-algebra then
\begin{enumerate}
\item[(i)] Let  $\chi\in\Sim(\A_{1})^{g}$, $\chi'\in\Sim(\A_{1})^{h}$ be two twisted characters.
Then for $\chi\neq\chi'$,
$\alpha_{\chi}^{g}\cdot\alpha_{\chi'}^{h}=0.$
\item[(ii)] $\dim_{\C}(\A_{g^{-1}})=\dim_{\C}(\A_{g})=|\Sim(\A_{1})^{g}|$.
\item[(iii)] $\{\alpha_{\chi}^{g}:\chi\in\Sim(\A_{1})^{g}\}$ forms an orthogonal basis of $\A_{g^{-1}}$.
\item[(iv)] Let $\chi\in\Rep^{g}$ and $\alpha_{\chi}^{g}$ be the corresponding $g$-twisted character.
Define $e_{\chi}^{g}\coloneqq\frac{\alpha_{\chi}^{g}}{\chi(\alpha_{\chi})}$ then $(e_{\chi}^{g})^{n}=e_{\chi}$, where $n$ is the order of $g$ in $G$.
\end{enumerate}
\end{corollary}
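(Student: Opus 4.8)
The plan is to derive all four parts of Corollary \ref{coro: properties} as consequences of the orthogonality structure established in Lemma \ref{properties: G crossed algebra}, using the multiplicative relations in part $(iii)$ of that lemma together with the idempotent $e_\chi = \alpha_\chi/\chi(\alpha_\chi)$ from part $(i)$. The key organizing observation is that each $\alpha_\chi^g$ is ``supported'' on the single idempotent $e_\chi$, in the sense that $e_{\chi'}\cdot\alpha_\chi^g = 0$ for $\chi'\neq\chi$ and $e_\chi\cdot\alpha_\chi^g = \alpha_\chi^g$; this was the crux of the computation in Lemma \ref{properties: G crossed algebra}$(iii)$. Each part below is essentially a bookkeeping exercise once this is in hand.

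For part $(i)$, I would multiply $\alpha_\chi^g\cdot\alpha_{\chi'}^h$ on the left by $e_\chi$ (which fixes $\alpha_\chi^g$) and observe, using the crossed commutativity condition from Definition \ref{def: crossed algebra} to move $e_\chi\in\A_1$ past $\alpha_\chi^g\in\A_{g^{-1}}$, that we can bring an idempotent to act on $\alpha_{\chi'}^h$; since $\chi\neq\chi'$, Lemma \ref{properties: G crossed algebra}$(iii)$ forces the product to vanish. (The action of $g^{-1}$ sends $e_\chi$ to $e_{g^{-1}\cdot\chi}$, but because $\chi\in\Sim(\A_1)^g$ is $g$-fixed in the \emph{strict} setting, we have $g^{-1}(e_\chi)=e_\chi$, so the commutation does not disturb the argument.) For part $(ii)$, both equalities follow immediately from the $\A_1$-module decomposition in Lemma \ref{properties: G crossed algebra}$(iv)$, noting that $\dim_\C\A_{g^{-1}}=|\Sim(\A_1)^g|$ and that $(\A_g)^\star=\A_{g^{-1}}$ gives the equality of the two dimensions. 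Part $(iii)$ combines part $(i)$ and part $(ii)$: the $\alpha_\chi^g$ are $|\Sim(\A_1)^g|=\dim_\C\A_{g^{-1}}$ in number and are pairwise orthogonal with respect to $\langle\cdot,\cdot\rangle$ by an orthogonality computation parallel to Lemma \ref{properties: G crossed algebra}$(ii)$, hence form an orthogonal basis.

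The part requiring the most care is $(iv)$, the claim $(e_\chi^g)^n = e_\chi$. The plan is to compute the power $(\alpha_\chi^g)^n$ directly. Since $\chi\in\Sim(\A_1)^g$ has $\alpha_\chi^g\in\A_{g^{-1}}$, successive products land in $\A_{g^{-k}}$, and the $n$-th power lands in $\A_1$ (as $g^n=1$); I expect $(\alpha_\chi^g)^n$ to be a scalar multiple of $\alpha_\chi=\chi(\alpha_\chi)e_\chi$. The natural route is to realize $\alpha_\chi^g$ via the extension $\widetilde{M}$ of Remark \ref{extofmod} to an $\A_{\langle g\rangle}$-module: on the $1$-dimensional representation $\chi$, the twisted character $\chi^g=\chi_{\widetilde M}|_{\A_g}$ records the scalar by which a chosen basis element of $\A_{g}$ acts, and the normalization $e_\chi^g = \alpha_\chi^g/\chi(\alpha_\chi)$ is designed so that $e_\chi^g$ acts as that scalar while behaving like a ``partial idempotent.'' Raising to the $n$-th power should multiply these scalars, and since $\widetilde M$ is a genuine $\A_{\langle g\rangle}$-module the product of the $n$ scalars along the cyclic orbit must return the identity action, yielding $e_\chi$.

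The main obstacle I anticipate is the scaling ambiguity flagged in the remarks following Definition \ref{def: twisted character}: $\alpha_\chi^g$ is only well-defined up to an $n$-th root of unity. Part $(iv)$ must therefore be interpreted so that $(e_\chi^g)^n = e_\chi$ holds regardless of the chosen representative, which it should, precisely because any $n$-th root of unity ambiguity $\zeta$ in $e_\chi^g$ contributes $\zeta^n = 1$ to the $n$-th power. I would make this explicit: fix one extension $\widetilde M$, verify the identity for that choice by the module-theoretic argument above, and then remark that the $n$-th power is independent of the choice. The only genuinely computational step is confirming that the partial idempotents multiply as $e_\chi^g\cdot e_\chi^g = e_\chi^{g^2}$ (up to the normalization), which follows from the crossed commutativity and Lemma \ref{properties: G crossed algebra}$(iii)$ applied along the cyclic subgroup $\langle g\rangle$; everything else is formal.
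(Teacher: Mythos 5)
Your treatment of parts (i)--(iii) is correct and is essentially what the paper does: the paper dismisses these as following directly from Lemma \ref{properties: G crossed algebra}, and your bookkeeping with the idempotents $e_{\chi}$ (support of $\alpha_{\chi}^{g}$ on $e_{\chi}$, the dimension count from Lemma \ref{properties: G crossed algebra}(iv), and $(\A_{g})^{\star}=\A_{g^{-1}}$) is exactly how that unwinding goes.

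Part (iv) is where there is a genuine gap, and it sits precisely at the step you dismissed as formal. Since $(\alpha_{\chi}^{g})^{n}\in\A_{1}$ is supported on $e_{\chi}$, one always has $(e_{\chi}^{g})^{n}=c\cdot e_{\chi}$ with $c=\widetilde{\chi}(\alpha_{\chi}^{g})^{n}/\chi(\alpha_{\chi})^{n}$, where $\widetilde{\chi}$ is the character of the extension $\widetilde{M}$ of Remark \ref{extofmod}; the entire content of (iv) is the quantitative normalization fact that $c=1$, i.e.\ that $\widetilde{\chi}(\alpha_{\chi}^{g})$ equals $\chi(\alpha_{\chi})$ up to an $n$-th root of unity. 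This does not follow from $\widetilde{M}$ being a genuine $\A_{\langle g\rangle}$-module: the module structure only gives multiplicativity, $\chi\bigl((e_{\chi}^{g})^{n}\bigr)=\widetilde{\chi}(e_{\chi}^{g})^{n}$, and your assertion that ``the product of the $n$ scalars along the cyclic orbit must return the identity action'' is exactly the claim being proved, so the argument is circular. The source of the difficulty is that $\alpha_{\chi}^{g}$ is normalized through the Frobenius pairing ($\lambda(x\,\alpha_{\chi}^{g})=\chi^{g}(x)$ for $x\in\A_{g}$, Definition \ref{def: twisted character}), not through the module action, so a priori $\widetilde{\chi}(\alpha_{\chi}^{g})$ could be any scalar. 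Likewise, Lemma \ref{properties: G crossed algebra}(iii) only controls products of $\alpha_{\chi}^{g}$ with the untwisted elements $\alpha_{\chi'}\in\A_{1}$; it says nothing about the scalar occurring in a product of two \emph{twisted} characters --- that scalar is exactly the $2$-cocycle datum of Corollary \ref{cocycle}, and nothing you have invoked forces it to telescope to $1$ along $\langle g\rangle$.

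The paper closes this gap by importing a nontrivial input: the norm identity $\langle\alpha_{\chi}^{g},\alpha_{\chi}^{g}\rangle=\langle\alpha_{\chi},\alpha_{\chi}\rangle$ for twisted characters, cited from \cite[(1)]{Twisted}. Expanding $\langle\alpha_{\chi}^{g},\alpha_{\chi}^{g}\rangle=\lambda\bigl((\alpha_{\chi}^{g})^{\star}\alpha_{\chi}^{g}\bigr)$ via the defining property of $\alpha_{\chi}^{g}$ gives $\widetilde{\chi}\bigl((\alpha_{\chi}^{g})^{\star}\bigr)=\overline{\chi(\alpha_{\chi})}$, and expanding the same quantity a second time through the support argument gives $\widetilde{\chi}(\alpha_{\chi}^{g})\,\widetilde{\chi}\bigl((\alpha_{\chi}^{g})^{\star}\bigr)=\chi(\alpha_{\chi})\overline{\chi(\alpha_{\chi})}$; combining yields $\widetilde{\chi}(\alpha_{\chi}^{g})=\chi(\alpha_{\chi})$, after which $\chi\bigl((e_{\chi}^{g})^{n}\bigr)=1$ and the support argument finish the proof. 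Some computation of this kind with $\lambda$ is unavoidable in any correct proof. (One can even avoid the $\star$-map: choosing a single extension $\widetilde{\chi}$ to define both $\chi^{g}$ and $\chi^{g^{-1}}$, the Frobenius pairing gives $\widetilde{\chi}(\alpha_{\chi}^{g})=\lambda(\alpha_{\chi}^{g}\alpha_{\chi}^{g^{-1}})=\widetilde{\chi}(\alpha_{\chi}^{g})^{2}/\chi(\alpha_{\chi})$, which together with $\widetilde{\chi}(\alpha_{\chi}^{g})\neq 0$ yields the same identity --- but again this is a computation with $\lambda$, not a formality.) Your handling of the root-of-unity ambiguity, by contrast, is fine and matches the paper's implicit convention.
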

\begin{proof}
$(i), (ii)$ and $(iii)$ directly follows from the Lemma \ref{properties: G crossed algebra}. \\
For $(iv)$, let $\chi\in\Sim(A_{1})^{g}$ then by \cite[(1)]{Twisted} we have $\langle\alpha_{\chi}^{g},\alpha_{\chi}^{g}\rangle=\langle\alpha_{\chi},\alpha_{\chi}\rangle$.
By using the definition of the Hermitian form and twisted characters,  we have $$\langle\alpha_{\chi}^{g},\alpha_{\chi}^{g}\rangle=\lambda(\alpha_{\chi}^{g}{\alpha_{\chi}^{g}}^{\star})=
\lambda({\alpha_{\chi}^{g}}^{\star}\alpha_{\chi}^{g})=
\widetilde{\chi}({\alpha_{\chi}^{g}}^{\star})\ \ \  
\mbox{and}\ \ \langle\alpha_{\chi},\alpha_{\chi}\rangle=\chi(\alpha_{\chi}^{\star})=\overline{\chi(\alpha_{\chi})}$$ where $\widetilde{\chi}$ as in Remark \ref{extofmod}.
Hence $\widetilde{\chi}({\alpha_{\chi}^{g}}^{\star})=\overline{\chi(\alpha_{\chi})}$.
Also we have  $\alpha_{\chi}^{g}{\alpha_{\chi}^{g}}^{\star}=\chi(\alpha_{\chi}^{g}{\alpha_{\chi}^{g}}^{\star})e_{\chi}$ implies that
$$\langle\alpha_{\chi}^{g},\alpha_{\chi}^{g}\rangle=\chi(\alpha_{\chi}^{g}{\alpha_{\chi}^{g}}^{\star})\lambda(e_{\chi})=\frac{\widetilde{\chi}(\alpha_{\chi}^{g})\widetilde{\chi}({\alpha_{\chi}^{g}}^{\star})\lambda(\alpha_{\chi})}{\chi(\alpha_{\chi})}$$
and by Lemma \ref{properties: G crossed algebra}, $\langle\alpha_{\chi},\alpha_{\chi}\rangle=\overline{\chi(\alpha_{\chi})}\lambda(\alpha_{\chi})$.
Hence, $\widetilde{\chi}(\alpha_{\chi}^{g})=\chi(\alpha_{\chi})$.
Consider, $$\chi((e_{\chi}^{g})^{n})=\chi(\frac{(\alpha_{\chi}^{g})^{n}}{\chi(\alpha_{\chi})^{n}})=\frac{\chi(\alpha_{\chi}^{g})^{n})}{\chi(\alpha_{\chi})^{n}}=\frac{\widetilde{\chi}(\alpha_{\chi}^{g})^{n}}{\chi(\alpha_{\chi})^{n}}=1.$$
By $(iii)$ of Lemma \ref{properties: G crossed algebra}, $(e_{\chi}^{g})^{n}=e_{\chi}$.
This proves the result.
\end{proof}

For $\chi\in\Sim(\A_{1})$,
let $G_{\chi}$ be  the stabilizer of $\chi$ under the action of $G$ \emph{i.e.} $G_{\chi}=\{g\in G:\prescript{g}{}{}\chi=\chi\}$.
Using the previous results we get
 a  $G_{\chi}$-graded algebra $\alpha_{\chi}\A$, $$\alpha_{\chi}\A=\bigoplus_{g\in G}\alpha_{\chi}\cdot\A_{g}
    \cong\bigoplus_{g\in G_{\chi}}\C\cdot e_{\chi}^{g}.$$ 
From this $G_{\chi}$-graded algebra $\alpha_{\chi}\A$, we obtain a central extension $G'_{\chi}$ of $G_{\chi}$ by $\C^{\times}$ \emph{i.e} 
$$1\rightarrow\C^{\times}\rightarrow G'_{\chi}\rightarrow G_{\chi}\rightarrow 1.$$
Note that for $g,h\in G_{\chi}$ the product $e_{\chi}^{g}\cdot e_{\chi}^{h}\in\A_{gh}$.
Our choice of elements $e_{\chi}^{g}$ determines a $2$-cocycle $\phi_{\chi}:G_{\chi}\times G_{\chi}\rightarrow\C^{\times}$ such that
$e_{\chi}^{g}\cdot e_{\chi}^{h}=\phi_{\chi}(g,h)e_{\chi}^{gh}$.
Moreover one can choose these  elements such that   $e_{\chi}^{g}e_{\chi}^{g^{-1}}=e_{\chi}$.
Hence we obtain:

\begin{corollary}\label{cocycle}
Let $\chi\in\Rep$ and $g,h\in G_{\chi}$ be any element 
then $e_{\chi}^{g}\cdot e_{\chi}^{h}=\phi_{\chi}(g,h)e_{\chi}^{gh}$, where $\phi_{\chi}$ is the $2$-cocycle corresponding to an extension $$1\rightarrow\C^{\times}\rightarrow G'_{\chi}\rightarrow G_{\chi}\rightarrow 1.$$
\end{corollary}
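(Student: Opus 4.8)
The statement is essentially a formalization of the construction in the paragraph preceding it, so the plan is to make the algebra $\alpha_\chi\A$ precise and read the cocycle off its multiplication. First I would fix, for each $g\in G_\chi$, the twisted-character element $\alpha_\chi^g\in\A_{g^{-1}}$ and set $e_\chi^g:=\alpha_\chi^g/\chi(\alpha_\chi)$ as in Corollary \ref{coro: properties}, noting $e_\chi^1=e_\chi$. By strictness, $G_\chi$ coincides with the stabilizer of $\chi$ for the partial action, so Lemma \ref{properties: G crossed algebra}(iii)--(iv) gives $\alpha_\chi\cdot\A_g=\C\cdot e_\chi^{g^{-1}}$ when $g\in G_\chi$ and $\alpha_\chi\cdot\A_g=0$ otherwise. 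Since $\alpha_\chi=\chi(\alpha_\chi)e_\chi$ is a nonzero scalar multiple of the idempotent $e_\chi$, the subspace $\alpha_\chi\A=e_\chi\A$ is a unital subalgebra of $\A$ with unit $e_\chi$, and it inherits a grading by $G_\chi$ whose homogeneous components are exactly the lines $\C\cdot e_\chi^g$. This justifies the displayed decomposition $\alpha_\chi\A\cong\bigoplus_{g\in G_\chi}\C\cdot e_\chi^g$.

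The second step is to analyze the product of two basis vectors. For $g,h\in G_\chi$ the element $e_\chi^g\cdot e_\chi^h$ again lies in $e_\chi\A$ and is homogeneous for the $G_\chi$-grading, hence lies in a single one-dimensional component and must be a scalar multiple of the corresponding basis vector $e_\chi^{gh}$; this defines $\phi_\chi(g,h)\in\C$ by $e_\chi^g e_\chi^h=\phi_\chi(g,h)e_\chi^{gh}$. The crucial point, and the thing I expect to be the main obstacle, is that this scalar is nonzero, so that $\phi_\chi$ takes values in $\C^\times$ rather than merely in $\C$. This follows from the invertibility of each $e_\chi^g$ in $e_\chi\A$: by Corollary \ref{coro: properties}(iv) we have $(e_\chi^g)^n=e_\chi$ for $n=o(g)$, and since the elements $\alpha_\chi^g$ (hence $e_\chi^g$) are only determined up to $o(g)$-th roots of unity by Remark \ref{extofmod}, I would normalize the choices so that $e_\chi^g e_\chi^{g^{-1}}=e_\chi$. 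Then $e_\chi^g$ is a unit of $\alpha_\chi\A$ and consequently $\phi_\chi(g,h)\neq 0$ for all $g,h\in G_\chi$.

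Finally, the cocycle identity is forced by associativity: comparing the two bracketings of $e_\chi^g e_\chi^h e_\chi^k$ and using the linear independence of the $e_\chi^{(-)}$ yields $\phi_\chi(g,h)\phi_\chi(gh,k)=\phi_\chi(h,k)\phi_\chi(g,hk)$, so $\phi_\chi\in Z^2(G_\chi,\C^\times)$. The central extension is then obtained by setting $G'_\chi:=\{z\,e_\chi^g: z\in\C^\times,\ g\in G_\chi\}$, the group of homogeneous units of $\alpha_\chi\A$ under multiplication; the assignment $z\,e_\chi^g\mapsto g$ is a surjective homomorphism onto $G_\chi$ whose kernel is $\C^\times e_\chi\cong\C^\times$, and its class in $H^2(G_\chi,\C^\times)$ is represented by $\phi_\chi$. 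Apart from the normalization needed to make $\phi_\chi$ both $\C^\times$-valued and well defined modulo coboundaries (absorbing the root-of-unity ambiguity of Remark \ref{extofmod}), every ingredient is a direct consequence of Lemma \ref{properties: G crossed algebra} and Corollary \ref{coro: properties} together with associativity of $\A$.
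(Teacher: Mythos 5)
Your proof is correct and takes essentially the same route as the paper: the paper's own justification is exactly the paragraph preceding the corollary, which forms the $G_{\chi}$-graded algebra $\alpha_{\chi}\A\cong\bigoplus_{g\in G_{\chi}}\C\cdot e_{\chi}^{g}$ with one-dimensional homogeneous components and reads the cocycle off the multiplication of basis vectors. The details you supply --- non-vanishing of $\phi_{\chi}$ via invertibility of $e_{\chi}^{g}$ coming from $(e_{\chi}^{g})^{o(g)}=e_{\chi}$, the cocycle identity from associativity, and the realization of $G'_{\chi}$ as the group of homogeneous units --- are precisely the steps the paper leaves implicit.
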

\begin{remark}\label{scalar}
Let $\chi\in\Sim(A_{1})$.
For any $g_{1},g_{2},\cdots,g_{n}\in G_{\chi}$, let $\phi_{\chi}(g_{1},\cdots,g_{n})$ be such that\\ 
$\chi^{g_{1}}(a_{1})\chi^{g_{2}}(a_{2})\cdots\chi^{g_{n}}(a_{n})=\phi_{\chi}(g_{1},\cdots,g_{n})\chi^{g_{1}g_{1}\cdots g_{n}}(a_{1}a_{2}\cdots a_{n})$.
These scalars will appear in our twisted Verlinde formula. 
\end{remark}

\begin{prop}\label{G-crosed str}
Let $\A=\oplus_{g\in G}A_{g}$ be a strict $G$-crossed Frobenius $\star$-algebra then following holds:
\begin{enumerate}
\item For each $g\in G_{\chi}$, we have a $g$-twisted character $\alpha_{\chi}^{g}$ and element  $e_{\chi}^{g}$ (determined up to scaling by root of unity) such that 
$$\A_{g^{-1}}\cong\bigoplus_{\chi\in \Sim(\A_{1})^{g}}\C\cdot e_{\chi}^{g}\ \ \mbox{and}\ \ e_{\chi}e_{\chi'}^{g}=\delta_{\chi,\chi'}e_{\chi}^{g}.$$
\item Let $\chi\in\Sim(\A_{1})$ be any element 
then $$e_{\chi}A\cong\bigoplus_{g\in G_{\chi}}\C\cdot e_{\chi}^{g}$$ is a $G_{\chi}$-crossed Frobenius $\star$-algebra.
There exists a normalized $2$-cocycle  $\phi_{\chi}\in H^{2}(G_{\chi},\C^{\times})$
such that the multiplication and the action  $G_{\chi}$ on $e_{\chi}A$  is determined as: 
for all $g,h\in G_{\chi}$,
$$e_{\chi}^{g}e_{\chi}^{h}=\phi_{\chi}(g,h)e_{\chi}^{gh}\ \ \mbox{and}\ \  g(e_{\chi}^{h})=\frac{\phi_{\chi}(g,h)}{\phi_{\chi}(ghg^{-1},g)}e_{\chi}^{ghg^{-1}}.$$
\item There exists a $2$-cocycle $\phi\in H^{2}(G,A_{1}^{\times})$ such that  for $\chi\in\Sim(A_{1})$, $$\chi\circ\phi(g,h)=\phi_{\chi}(g,h)\ \ \mbox{and} \ \ e_{\chi}^{g}e_{\chi}^{h}=\phi(g,h)e_{\chi}^{gh}\ \ \ \forall\  g,h\in G_{\chi}.$$ 
Moreover, the given action of $G$ on $\A$ is determined as, for each $g,h\in G$, $\chi\in\Sim(\A_{1})^{h}$
$$g(e_{\chi}^{h})=\frac{\phi(g,h)}{\phi(gh g^{-1},g)} e_{\prescript{g}{}{\chi}}^{ghg^{-1}}.$$
\item Let $\psi:G\times G\rightarrow \A_{1}^{\times}$ defined as $\psi(g,h)=\frac{\phi(g,h)^{\star}}{\phi(h^{-1},g^{-1})}$, where $\phi$ as in $(3)$ above.
Then there exist a $1$-cochain $\theta:G\rightarrow \A_{1}$ such that $\partial(\theta)=\psi$ and the $\star$-map is given as, for each $e_{\chi}^{g}\in \A_{g}$,  $(e_{\chi}^{g})^{\star}=\theta(g)e_{\chi}^{g^{-1}}\ \ \forall\ g\in G,\ \forall\ \chi\in \Sim(\A_{1})^{g}$.
\end{enumerate}
\end{prop}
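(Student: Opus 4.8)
The plan is to take the four parts in order, leaning throughout on the orthogonality relations already proved in Lemma \ref{properties: G crossed algebra} and Corollary \ref{coro: properties}. Part (1) is essentially a repackaging: the decomposition $\A_{g^{-1}}\cong\bigoplus_{\chi\in\Sim(\A_1)^g}\C\cdot e_\chi^g$ is Corollary \ref{coro: properties}(iii)--(iv) after rescaling $\alpha_\chi^g$ to $e_\chi^g$, while $e_\chi e_{\chi'}^g=\delta_{\chi,\chi'}e_\chi^g$ is immediate from Lemma \ref{properties: G crossed algebra}(iii) together with $e_\chi=\alpha_\chi/\chi(\alpha_\chi)$. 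For part (2) I would multiply the grading decomposition by the idempotent $e_\chi$ and use $e_\chi e_{\chi'}^g=\delta_{\chi,\chi'}e_\chi^g$ to collapse $e_\chi\A$ onto $\bigoplus_{g\in G_\chi}\C\cdot e_\chi^g$; by strictness this is exactly a twisted group algebra of the stabiliser $G_\chi$, with the cocycle $\phi_\chi$ supplied by Corollary \ref{cocycle} (normalised so that $e_\chi^1=e_\chi$ and $e_\chi^g e_\chi^{g^{-1}}=e_\chi$, which is possible by the root-of-unity freedom in the $e_\chi^g$). The action formula then comes straight from crossed commutativity: taking $a=e_\chi^g\in\A_g$ and $b=e_\chi^h$ in $ab=g(b)a$, writing $g(e_\chi^h)=c\,e_\chi^{ghg^{-1}}$ (legitimate since $g\in G_\chi$ gives $g(e_\chi)=e_{\prescript{g}{}{\chi}}=e_\chi$, so $g$ preserves the $\chi$-block), and expanding both sides with the cocycle relation forces $c=\phi_\chi(g,h)/\phi_\chi(ghg^{-1},g)$.

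For part (3) the scalar cocycles $\phi_\chi\in H^2(G_\chi,\C^\times)$ must be assembled into one $A_1^\times$-valued cocycle. The key observation is that $A_1\cong\C^{\Sim(\A_1)}$ is commutative semisimple, so $A_1^\times\cong\prod_{\chi}\C^\times$ as a $G$-module with $G$ permuting the factors through its action on $\Sim(\A_1)$; decomposing into orbits exhibits $A_1^\times$ as a product of coinduced modules $\CoInd_{G_\chi}^G\C^\times$, one per orbit. By Shapiro's lemma $H^2(G,A_1^\times)\cong\prod_{\text{orbits}}H^2(G_\chi,\C^\times)$, and I would let $\phi$ be the class corresponding to the family $(\phi_\chi)$, so that $\chi\circ\phi=\phi_\chi$ by construction; the multiplication formula follows because $A_1$ acts on $e_\chi^{gh}$ through $\chi$. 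For the action formula at a general $g\in G$, I would use that the automorphism $g$ sends $e_\chi^h$ to a multiple of $e_{\prescript{g}{}{\chi}}^{ghg^{-1}}$ (again via $g(e_\chi)=e_{\prescript{g}{}{\chi}}$), combine the multiplicativity $g(e_\chi^h e_\chi^{h'})=g(e_\chi^h)g(e_\chi^{h'})$ with the $2$-cocycle identity for $\phi$, and use the coherent normalisation to pin the scalar to $\phi(g,h)/\phi(ghg^{-1},g)$.

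For part (4) I would first determine the shape of the $\star$-map. Since $e_\chi^\star=e_\chi$ by Remark \ref{frob star:properties}, we get $(e_\chi^g)^\star=(e_\chi^g)^\star e_\chi$ lying in the $\chi$-block of $\A_{g^{-1}}$, so $(e_\chi^g)^\star=\theta_\chi(g)\,e_\chi^{g^{-1}}$ for a scalar $\theta_\chi(g)$. Positive-definiteness forces $\theta_\chi(g)>0$: indeed $0<\langle e_\chi^g,e_\chi^g\rangle=\lambda(e_\chi^g(e_\chi^g)^\star)=\theta_\chi(g)\lambda(e_\chi)$ with $\lambda(e_\chi)>0$. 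Applying the conjugate-linear anti-involution $\star$ to $e_\chi^g e_\chi^h=\phi_\chi(g,h)e_\chi^{gh}$ yields $\theta_\chi(g)\theta_\chi(h)/\theta_\chi(gh)=\overline{\phi_\chi(g,h)}/\phi_\chi(h^{-1},g^{-1})=\chi(\psi(g,h))$. Assembling the $\theta_\chi$ into $\theta\in A_1^\times$ with $\chi(\theta(g))=\theta_\chi(g)$ then gives $(e_\chi^g)^\star=\theta(g)e_\chi^{g^{-1}}$. Since each $\theta_\chi(g)$ is a positive real, $\psi$ takes values in the uniquely divisible part $R^\times/U\cong(\mathbb{R}_{>0})^{\dim R}$, whose higher cohomology vanishes by Remark \ref{remark: unitary and unit}; this is precisely what lets the remaining undetermined components of $\theta$ be chosen so that $\partial\theta=\psi$ holds throughout $A_1^\times$.

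I expect the real work to be in part (3). Crossed commutativity pins down the $G$-action only inside each block $e_\chi\A$, that is for $g\in G_\chi$; extending to arbitrary $g$ while simultaneously packaging all the block cocycles $\phi_\chi$ into one genuine $A_1^\times$-valued cocycle requires choosing the root-of-unity normalisations of the $e_\chi^g$ coherently along entire $G$-orbits, and the Shapiro/coinduction description of $H^2(G,A_1^\times)$ is what makes this bookkeeping clean. By contrast, the positivity input in part (4) is comparatively routine once Remark \ref{remark: unitary and unit} is invoked.
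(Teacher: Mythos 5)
Your proposal is correct and follows essentially the same route as the paper's own proof: part (2) via the crossed-commutativity computation $e_{\chi}^{g}e_{\chi}^{h}=g(e_{\chi}^{h})e_{\chi}^{g}$ forcing the action formula, part (3) via the decomposition $\A_{1}^{\times}\cong\bigoplus\CoInd_{G_{\chi}}^{G}(\C^{\times})$ over orbits and Shapiro's lemma, and part (4) by applying the anti-involution to the multiplication relation and using that $(e_{\chi}^{g})^{\star}$ is a scalar multiple of $e_{\chi}^{g^{-1}}$. If anything, your part (4) (the positivity of the scalars $\theta_{\chi}(g)$ and the appeal to Remark \ref{remark: unitary and unit}) supplies details that the paper's one-line argument leaves implicit.
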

\begin{proof}
 $(1)$ follows from the Lemma \ref{properties: G crossed algebra} and Corollary \ref{coro: properties}. 

For $(2)$, let $\chi\in\Sim(\A_{1})$ be any element then from Corollary \ref{cocycle}, there exists a $2$-cocycle
$\phi_{\chi}$ in $H^{2}(G_{\chi},\C^{\times})$ such that 
$e_{\chi}^{g}\cdot e_{\chi}^{h}=\phi_{\chi}(g,h)e_{\chi}^{gh}\ \forall\ g,h\in G_{\chi}$.
By the definition of a $G$-crossed Frobenius $\star$-algebra, we have
$ e_{\chi}^{g}e_{\chi}^{h}=g( e_{\chi}^{h})e_{\chi}^{g}$ and $g(e_{\chi}^{h})\in \A_{ghg^{-1}}$, so $g(e_{\chi}^{h}) =\beta\cdot e_{\chi}^{ghg^{-1}}$ for some $\beta\in \C^{\times}$.

Now consider
$$\phi_{\chi}(g,h)e_{\chi}^{gh}=e_{\chi}^{g}\cdot e_{\chi}^{h}=g( e_{\chi}^{h})e_{\chi}^{g}=\beta\cdot e_{\chi}^{ghg^{-1}}e_{\chi}^{g}=\beta\phi_{\chi}(ghg^{-1},g)e_{\chi}^{gh}.$$
This implies  $$g(e_{\chi}^{h})=\frac{\phi_{\chi}(g,h)}{\phi_{\chi}(ghg^{-1},g)}e_{\chi}^{ghg^{-1}}\ \ \ \forall\ g,h\in G_{\chi}.$$

For $(3)$,
one can easily check that $\A_{1}^{\times}=\oplus\CoInd_{G_{\chi}}^{G}(\C^{\times})$,
where the sum runs over the orbit representatives for the induced action of $G$ on $\Sim(\A_{1})$.
Then by Shapiro's Lemma, we have $$H^{2}(G,\CoInd_{G_{\chi}}^{G}(\C^{\times}))\cong H^{2}(G_{\chi},\C^{\times}).$$
Also $H^{2}(G,\A_{1}^{\times})\cong\oplus H^{2}(G,\CoInd_{G_{\chi}}^{G}(\C^{\times}))$, where the sum runs over the orbit representatives for the induced action of $G$ on $\Sim(\A_{1})$.
Hence we get a required $\phi\in H^{2}(G,\A_{1}^{\times})$.

$(4)$ follows from the part $(2)$ and the  fact that
for any $x,y\in \A, \ (xy)^{\star}=y^{\star}x^{\star}$ and $(e_{\chi}^{g})^{\star}$ is a  scalar multiple of $e_{\chi}^{g^{-1}}$.
This proves the result.

\end{proof}

\subsection{Proof of the main result}
Now we will define and classify the strict $G$-crossed extension of a commutative Frobenius $\star$-algebra.
\begin{definition}[$G$-crossed extension]
 Let $G$ be a finite group and $R$ be a commutative Frobenius $\star$-algebra such that $G$ acts on $R$ by  Frobenius $\star$-algebra automorphism. 
 Then \emph{$G$-crossed extension}  of $R$ is a pair $(\A,j)$, where $A=\oplus_{g\in G}\A_{g}$ is a $G$-crossed Frobenius $\star$-algebra, $j:R\hookrightarrow\A$ is a Frobenius $\star$-algebra embedding with $j(R)=\A_{1}$ and restriction of the action of $G$ on $j(R)$    agrees with the given action of $G$ on $R$.
 
 If $A$ is strict $G$-crossed Frobenius $\star$-algebra then we will called   $(A,j)$   as a strict $G$-crossed extension of $R$.
 We will denote the set of all isomorphism classes of strict $G$-crossed extensions of  $R$  by $\Frob(G,R)$.
\end{definition}

\noindent Now we will complete the proof of the main result: \begin{proof}
[\bf{Proof of Theorem \ref{main result}}]
Let $A$ be a strict $G$-crossed extension of $R$.
By the Prop. \ref{G-crosed str}, $\A=\oplus_{g\in G}\A_{g}$ as $R$-module and for each $g\in G$ there exists  $\{e_{\chi}^{g}:\chi\in \Sim(A_{1})^{g}\}$ such that $\A_{g^{-1}}\cong\oplus_{\chi\in\Sim(R)^{g}} \C\cdot e_{\chi}^{g}$ .
Also there exists a  normalized $2$-cocycle $\phi\in C^{2}(G,R^{\times})$ such that  multiplication and representation of $G$ on $\A$ is as follows:
$$e_{\chi}^{g}e_{\chi}^{h}=\phi(g,h)e_{\chi}^{gh},\ \  g(e_{\chi}^{h})=\frac{\phi(g,h)}{\phi(ghg^{-1},g)}e_{\prescript{g}{}{\chi}}^{ghg^{-1}}.$$
We will denote $A$ with the above operations by $\A^{\phi}$.
Let $A^{\phi'}$ be an another strict $G$-crossed extension of $R$ isomorphic to $A^{\phi}$ then there exists a grade preserving isomorphism $f:A^{\phi'}\rightarrow A^{\phi}$ which is identity on $1$-grade.
Using this isomorphism we get a function $\theta:G\rightarrow R^{\times}$ such that $f({e'}_{\chi}^{g})=\theta(g)e_{\chi}^{g}$, for all $\chi\in\Sim(R)^{g}$.
Also $f$ is a homomorphism implies that
 $\phi'(g,h)=\partial(\theta)(g,h)\phi(g,h)$ for all $g,h\in G$ \emph{i.e.} $[\phi']=[\phi]\in H^{2}(G,R^{\times})$.
Thus, we get a well-defined map 
$$F: \Frob(G,R)\rightarrow H^{2}(G,R^{\times})\ \ \mbox{defined as}\ \ F(A^{\phi})= [\phi].$$

Let $\phi\in H^{2}(G,R^{\times})$ be a normalized $2$-cocycle. 
Let $A_{g^{-1}}$ be a $\C$-vector space with basis $\{e_{\chi}^{g}:\chi\in \Sim(R)^{g}\}$.
We define a $R$-bimodule structure on $A_{g^{-1}}$ as, for $\chi\in \Sim(R),\chi'\in \Sim(R)^{g}$ $$e_{\chi}e_{\chi'}^{g}=e_{\chi'}^{g}e_{\chi}=\delta_{\chi,\chi'}e_{\chi'}^{g}$$
extend $\C$-linearly to $A_{g^{-1}}$.

Suppose   $A^{\phi}=\oplus_{g\in G}A_{g}$ then $A^{\phi}$ is a $G$-graded $R$-bimodule.
Now we define a multiplication, action of $G$  on basis elements using $\phi$ as, $$e_{\chi}^{g}e_{\chi}^{h}=\phi(g,h)e_{\chi}^{gh},\ \mbox{and}\ \  g(e_{\chi}^{h})=\frac{\phi(g,h)}{\phi(ghg^{-1},g)}e_{\prescript{g}{}{\chi}}^{ghg^{-1}}$$
and extend it $R$-linearly to $A^{\phi}$.
Define $(e_{\chi}^{g})^{\star}=e_{\chi}^{g^{-1}}$  extend it $R$-conjugate linearly to $A^{\phi}$ then in  view of Remark \ref{remark: unitary and unit}, $\star$-map is an anti-involution.
Then $A^{\phi}$ becomes a $G$-crossed Frobenius $\star$-algebra.
We will show that $[\A^{\phi}]$ in $\Frob(G,R)$ is depends only on $[\phi]$ in $H^{2}(G,R^{\times})$.
Let $\phi'$ be another $2$-cocycle such that $[\phi]=[\phi']$ in $H^{2}(G,R^{\times})$ then there exists $\theta:G\rightarrow R^{\times}$ such that $\phi=\partial(\theta)\phi'$.
Define $f:\A^{\phi}\rightarrow \A^{\phi'}$ as $f(e_{\chi}^{g})=\theta(g){e}_{\chi}^{g}$ for $\chi\in\Sim(R),\ g\in G$ and extend $R$-linearly to $\A$ then one can easily check that $f$ is an isomorphism of $G$-crossed extensions.
This proves that $[\A^{\phi}]$ in $\Frob(G,R)$ is depends only on $[\phi]\in H^{2}(G,R^{\times})$.
Thus, we get a well-defined map
 $$F_{1}:H^{2}(G,R^{\times})\rightarrow \Frob(G,R)\ \ \mbox{defined as}\ \ F_{1}([\phi])= \A^{\phi}.$$
One can check that $F$ and $F_{1}$ are inverses of each other.
This proves the theorem.
\end{proof}

\subsection{Twisted Verlinde Formula}

 We will now  state and prove a twisted Verlinde  formula for  the computation of the linear functional $\lambda$ on a strict  $G$-crossed Frobenius $\star$-algebra.
 The twisted Verlinde formula in case of braided $G$-crossed categories was proved in \cite{Deshpande2019CrossedMC}.
\begin{lemma}\label{stabilizer}
Let $A$ be a strict $G$-crossed Frobenius $\star$-algebra.
Let $g_{1},\cdots,g_{n}$ be any $n$-element of $G$ and let $a_{i}\in A_{g_{i}}$, so that $a_{1}a_{2}\cdots a_{n}\in A_{g_{1}g_{2}\cdots g_{n}}$.
Let $\chi\in\Sim(A_{1})^{g_{1}g_{2}\cdots g_{n}}$ such that ($g_{1}g_{2}\cdots g_{n}$-twisted character) 
$\chi^{g_{1}g_{2}\cdots g_{n}}(a_{1}a_{2}\cdots a_{n})\neq 0$. 
Then $\chi\in \Sim(A_{1})^{\langle g_{1},g_{2},\cdots, g_{n}\rangle}$.
\end{lemma}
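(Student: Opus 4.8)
The plan is to convert the nonvanishing of the twisted character into the nonvanishing of a single projected product $e_\chi\,a_1\cdots a_n$, and then to peel off the factors one at a time. Throughout write $h=g_1\cdots g_n$ and let $e_\chi\in A_1$ be the primitive idempotent attached to $\chi$; recall from Lemma \ref{properties: G crossed algebra} that $\alpha_\chi^h=e_\chi\alpha_\chi^h=\alpha_\chi^h e_\chi$, and that under the Frobenius duality $\Theta(a)=\lambda(-\cdot a)$ the twisted character of Definition \ref{def: twisted character} is computed by $\chi^h(y)=\lambda(y\,\alpha_\chi^h)$ for $y\in A_h$.

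First I would record the identity $a\,e_\psi=g(e_\psi)\,a=e_{\prescript{g}{}{}\psi}\,a$ for $a\in A_g$, which is immediate from the crossed commutativity condition together with the fact that a Frobenius $\star$-automorphism permutes the primitive central idempotents of $A_1$ according to the action (\ref{action}), so that $g(e_\psi)=e_{\prescript{g}{}{}\psi}$. Iterating this through the product gives $a_1\cdots a_n\,e_\chi=e_{\prescript{h}{}{}\chi}\,a_1\cdots a_n=e_\chi\,a_1\cdots a_n$, using $\prescript{h}{}{}\chi=\chi$. Consequently $\chi^h(a_1\cdots a_n)=\lambda(a_1\cdots a_n\,e_\chi\,\alpha_\chi^h)=\lambda\big((e_\chi\,a_1\cdots a_n)\,\alpha_\chi^h\big)$, and therefore the hypothesis $\chi^h(a_1\cdots a_n)\neq 0$ forces $e_\chi\,a_1\cdots a_n\neq 0$.

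The core step is then the claim that \emph{if $e_\chi\,a_1\cdots a_n\neq 0$ with $a_i\in A_{g_i}$, then $g_i\in G_\chi$ for every $i$}, which I would prove by induction on $n$. The key structural input is that $e_\chi A_g$ is nonzero if and only if $g\in G_\chi$: this follows from the decomposition $A_g\cong\bigoplus_{\psi\in\Sim(A_1)^{g}}\C\cdot\alpha_\psi^{g^{-1}}$ of Remark \ref{decompo:centre of Frob star algebra} and Proposition \ref{G-crosed str}, together with the orthogonality $e_\chi\alpha_\psi^{g^{-1}}=\delta_{\chi\psi}\,\alpha_\chi^{g^{-1}}$ from Lemma \ref{properties: G crossed algebra}, where strictness guarantees $\Sim(A_1)^g=\{\psi:g\in G_\psi\}$. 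Writing $e_\chi\,a_1\cdots a_n=(e_\chi a_1)(a_2\cdots a_n)$, nonvanishing gives $e_\chi a_1\neq 0$, hence $g_1\in G_\chi$. Since then $\prescript{g_1^{-1}}{}{}\chi=\chi$ we get $e_\chi a_1=e_\chi a_1 e_\chi$, so $e_\chi\,a_1\cdots a_n=(e_\chi a_1)(e_\chi\,a_2\cdots a_n)$ and hence $e_\chi\,a_2\cdots a_n\neq 0$; the inductive hypothesis applied to $a_2,\ldots,a_n$ yields $g_2,\ldots,g_n\in G_\chi$.

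Finally, since $G_\chi$ is a subgroup of $G$, the memberships $g_1,\ldots,g_n\in G_\chi$ give $\langle g_1,\ldots,g_n\rangle\subseteq G_\chi$, which is exactly the assertion $\chi\in\Sim(A_1)^{\langle g_1,\ldots,g_n\rangle}$. The only delicate points are bookkeeping ones, namely verifying $g(e_\psi)=e_{\prescript{g}{}{}\psi}$ and the idempotent-absorption identities for $\alpha_\chi^h$, while the genuinely essential ingredient, and the place where strictness is used, is the equivalence $e_\chi A_g\neq 0\Leftrightarrow g\in G_\chi$.
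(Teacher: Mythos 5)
Your proof is correct, but it takes a different route from the paper's. The paper argues directly: for arbitrary $a\in A_{1}$, centrality of $A_{1}$ (crossed commutativity with $g=1$) plus crossed commutativity applied to the initial segment $a_{1}\cdots a_{i}\in A_{g_{1}\cdots g_{i}}$ gives $a(a_{1}\cdots a_{n})=(g_{1}\cdots g_{i})(a)(a_{1}\cdots a_{n})$; applying $\chi^{g_{1}\cdots g_{n}}$ and using its $A_{1}$-module property $\chi^{h}(ay)=\chi(a)\chi^{h}(y)$, one cancels the nonzero scalar $\chi^{h}(a_{1}\cdots a_{n})$ to get $\chi=\chi\circ(g_{1}\cdots g_{i})$ for every $i$, hence $\chi$ is fixed by every initial product $g_{1}\cdots g_{i}$ and therefore by each $g_{i}$. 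You instead specialize the test element to the single idempotent $e_{\chi}$, use the Frobenius duality formula $\chi^{h}(y)=\lambda(y\,\alpha_{\chi}^{h})$ to convert the hypothesis into $e_{\chi}a_{1}\cdots a_{n}\neq 0$, and then peel off factors by induction via the equivalence $e_{\chi}A_{g}\neq 0\Leftrightarrow g\in G_{\chi}$, which rests on the decomposition of graded components into twisted-character lines and orthogonality. The paper's computation is shorter and more self-contained (it needs neither the decomposition of $A_{g}$ nor the identification of $\chi^{h}$ with $\lambda(-\cdot\alpha_{\chi}^{h})$), while your version localizes the argument at one idempotent and isolates a reusable structural fact about which graded pieces survive projection by $e_{\chi}$. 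One small correction: strictness is not where you say it is. The direction of your key equivalence that the induction actually uses, namely $e_{\chi}A_{g}\neq 0\Rightarrow g\in G_{\chi}$, needs only the inclusion of Lemma \ref{lem:fixedpoints}, which holds in any $G$-crossed Frobenius $\star$-algebra; strictness (or at least that inclusion) enters both your proof and the paper's only in interpreting the hypothesis $\chi\in\Sim(A_{1})^{g_{1}\cdots g_{n}}$ as fixedness under the honest action (\ref{action}) and in making the conclusion's notation unambiguous.
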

\begin{proof} 
Consider any $a\in A_{1}$.
Then by the definition of $G$-crossed Frobenius $\star$-algebra, for each $i$ we have 
$$a\cdot a_{1}a_{2}\cdots a_{i}=a_{1}a_{2}\cdots a_{i}\cdot a=g_{1}g_{2}\cdots g_{i}(a)\cdot a_{1}a_{2}\cdots a_{i}.$$
Hence for each $i$, $a(a_{1}a_{2}\cdots a_{n})=(g_{1}g_{2}\cdots g_{i})(a)(a_{1}a_{2}\cdots a_{n})$ as an element of $A_{g_{1}g_{2}\cdots g_{n}}$.
This implies that, $$\chi^{g_{1}g_{2}\cdots g_{n}}(aa_{1}a_{2}\cdots a_{n})=\chi^{g_{1}g_{2}\cdots g_{n}}((g_{1}g_{2}\cdots g_{i})(a)a_{1}a_{2}\cdots a_{n}).$$
Hence, $\chi(a)\chi^{g_{1}g_{2}\cdots g_{n}}(a_{1}a_{2}\cdots a_{n})=\chi\circ(g_{1}g_{2}\cdots g_{i})(a)\chi^{g_{1}g_{2}\cdots g_{n}}(a_{1}a_{2}\cdots a_{n})$.
Since we have assumed that $\chi^{g_{1}g_{2}\cdots g_{n}}(a_{1}a_{2}\cdots a_{n})\neq 0$.
Therefore, $$\chi(a)=\chi\circ(g_{1}g_{2}\cdots g_{i})(a) \ \ \ \  \mbox{for any}\ a\in A_{1}\ \mbox{and}\ 1\leq i\leq n.$$
This implies that $\chi$ is fixed by each $g_{i}$.
This proves the result.
\end{proof}
\begin{theorem}[twisted Verlinde formula in genus 0]\label{Verlinde formula:genus 0}
Let $A$ be a strict $G$-crossed Frobenius $\star$-algebra.
Let $g_{1},g_{2},\cdots,g_{n}\in G$ such that $g_{1}g_{2}\cdots g_{n}=1$. 
Let $a_{i}\in A_{g_{i}}$ then 
$$\lambda(a_{1}a_{2}\cdots a_{n})=\sum_{\chi\in \Sim(A_{1})^{\langle g_{1},g_{2},\cdots, g_{n}\rangle}}\frac{\chi^{g_{1}}(a_{1})\chi^{g_{2}}(a_{2})\cdots\chi^{g_{n}}(a_{n})}{\chi(\alpha_{\chi})\cdot\chi\circ\phi(g_{1},g_{2},\cdots,g_{n})}$$
where  $\chi\circ\phi(g_{1},g_{2},\cdots,g_{n})=\phi_{\chi}(g_{1},g_{2},\cdots,g_{n})$ are the scalar defined in Remark \ref{scalar}. 
\end{theorem}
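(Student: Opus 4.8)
The plan is to compute $\lambda(a_1 a_2 \cdots a_n)$ by expanding each $a_i \in A_{g_i}$ in the orthogonal basis of twisted characters guaranteed by Corollary \ref{coro: properties}(iii), namely $a_i = \sum_{\chi_i \in \Sim(A_1)^{g_i}} c_{\chi_i} e_{\chi_i}^{g_i}$ (absorbing the normalizing constants into the coefficients, using $e_\chi^g = \alpha_\chi^g / \chi(\alpha_\chi)$). By $\C$-linearity of $\lambda$ it suffices to understand products of the form $e_{\chi_1}^{g_1} e_{\chi_2}^{g_2} \cdots e_{\chi_n}^{g_n}$ and to extract the coefficient picked out by $\lambda$. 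First I would invoke Corollary \ref{coro: properties}(i): if two adjacent (hence, by induction, any two) of the characters $\chi_i$ differ, then a product such as $\alpha_{\chi}^g \cdot \alpha_{\chi'}^h$ vanishes, so the only surviving terms are those in which all the $\chi_i$ equal a single common character $\chi$. This collapses the multi-index sum to a single sum over $\chi$.

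Next, for a fixed common $\chi$, I would iterate the cocycle relation from Proposition \ref{G-crosed str}(3), $e_\chi^g e_\chi^h = \phi(g,h) e_\chi^{gh}$, together with the scalar relation of Remark \ref{scalar}, to obtain $e_\chi^{g_1} e_\chi^{g_2} \cdots e_\chi^{g_n} = \phi_\chi(g_1,\ldots,g_n)\, e_\chi^{g_1 g_2 \cdots g_n} = \phi_\chi(g_1,\ldots,g_n)\, e_\chi^{1} = \phi_\chi(g_1,\ldots,g_n)\, e_\chi$, where I have used the hypothesis $g_1 g_2 \cdots g_n = 1$ so that the product lands in $A_1$ and $e_\chi^1 = e_\chi$ is the primitive central idempotent. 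Translating back into the normalized $\alpha_\chi^{g_i}$ and recalling that $\chi^{g_i}(a_i)$ is precisely the coefficient of $a_i$ along $e_\chi^{g_i}$ scaled appropriately, the contribution of $\chi$ to $\lambda(a_1 \cdots a_n)$ becomes $\chi^{g_1}(a_1)\cdots\chi^{g_n}(a_n)$ times $\phi_\chi(g_1,\ldots,g_n)^{-1}$ times $\lambda(e_\chi)$. Using part (i) of Lemma \ref{properties: G crossed algebra}, $\lambda(e_\chi) = \lambda(\alpha_\chi)/\chi(\alpha_\chi)$, and the bookkeeping must be arranged so that the normalization $\chi(\alpha_\chi)$ in the denominator matches the stated formula.

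The restriction of the summation range to $\Sim(A_1)^{\langle g_1,\ldots,g_n\rangle}$ is handled by Lemma \ref{stabilizer}: any $\chi$ for which the twisted character $\chi^{g_1\cdots g_n}(a_1\cdots a_n)$ is nonzero must in fact be fixed by the whole subgroup $\langle g_1,\ldots,g_n\rangle$, so extending the sum to all of $\Sim(A_1)^{g_1 g_2\cdots g_n}$ adds only zero terms; equivalently, only such $\chi$ admit all the twisted characters $\chi^{g_i}$ simultaneously, which is exactly what is needed for the product $e_\chi^{g_1}\cdots e_\chi^{g_n}$ to make sense term by term.

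The main obstacle I anticipate is the careful tracking of the normalization constants and the precise appearance of $\chi(\alpha_\chi)$ in the denominator: one must consistently pass between the $\alpha_\chi^g$ (twisted characters, well-defined up to roots of unity) and the idempotent-like elements $e_\chi^g = \alpha_\chi^g/\chi(\alpha_\chi)$, verify that the root-of-unity ambiguities cancel so that the final expression is genuinely well-defined, and confirm that $\phi_\chi$ assembled from the iterated two-variable cocycle agrees with the $n$-fold scalar $\phi_\chi(g_1,\ldots,g_n)$ of Remark \ref{scalar}. Apart from this bookkeeping, the argument is a direct computation resting on the orthogonality and product relations already established.
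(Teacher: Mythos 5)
Your proposal is correct in substance, but it runs ``dual'' to the paper's argument, and the comparison is instructive. The paper never expands the elements $a_{i}$: it expands the functional instead. From $1=\sum_{\chi}e_{\chi}=\sum_{\chi}\alpha_{\chi}/\chi(\alpha_{\chi})$ and $\Theta(\alpha_{\chi})=\chi$ one gets $\lambda=\sum_{\chi\in\Sim(A_{1})}\chi/\chi(\alpha_{\chi})$ as functionals on $A_{1}$; since $g_{1}\cdots g_{n}=1$ the product $a_{1}\cdots a_{n}$ lies in $A_{1}$, so $\lambda(a_{1}\cdots a_{n})=\sum_{\chi}\chi(a_{1}\cdots a_{n})/\chi(\alpha_{\chi})$; Lemma \ref{stabilizer} kills every term with $\chi\notin\Sim(A_{1})^{\langle g_{1},\ldots,g_{n}\rangle}$; and Remark \ref{scalar}, read as the \emph{definition} of $\phi_{\chi}(g_{1},\ldots,g_{n})$, converts $\chi(a_{1}\cdots a_{n})$ into $\chi^{g_{1}}(a_{1})\cdots\chi^{g_{n}}(a_{n})/\phi_{\chi}(g_{1},\ldots,g_{n})$. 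That is the entire proof. Your route --- expand each $a_{i}$ in the twisted-character basis, collapse the multi-index sum by Corollary \ref{coro: properties}(i), multiply out --- is legitimate, and it even makes Lemma \ref{stabilizer} unnecessary: each $a_{i}$ is supported on characters fixed by $g_{i}$, so a surviving common $\chi$ is automatically fixed by $\langle g_{1},\ldots,g_{n}\rangle$. What it costs you are exactly the two normalization facts the paper's route never needs: that $\chi^{g_{i}}(a_{i})$ is the coefficient of $a_{i}$ along the corresponding basis vector (this rests on $\widetilde{\chi}(\alpha_{\chi}^{g})=\chi(\alpha_{\chi})$ from the proof of Corollary \ref{coro: properties}(iv), together with $\lambda(\alpha_{\chi})=\chi(1)=1$), and the precise relation between the iterated two-variable structure constants and the $n$-fold scalar of Remark \ref{scalar}.

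On that last point your writeup is, as it stands, internally inconsistent, and this is the one thing you must repair. You assert $e_{\chi}^{g_{1}}\cdots e_{\chi}^{g_{n}}=\phi_{\chi}(g_{1},\ldots,g_{n})\,e_{\chi}$ and then say the contribution of $\chi$ carries $\phi_{\chi}(g_{1},\ldots,g_{n})^{-1}$; if the coefficients are the $\chi^{g_{i}}(a_{i})$ and the product of basis vectors is $\phi_{\chi}\cdot e_{\chi}$, the factor would be $\phi_{\chi}$, not $\phi_{\chi}^{-1}$. The resolution is that the iterated structure cocycle and the scalar of Remark \ref{scalar} are not equal but mutually \emph{inverse}: the duality normalization forces $\chi^{g}(e_{\chi}^{g^{-1}})=1$, and testing the defining relation of Remark \ref{scalar} on the basis elements themselves (so that every $\chi^{g_{i}}$-value on the left is $1$, while the right-hand side picks up the iterated structure constant times $\chi(e_{\chi})=1$) shows that the Remark \ref{scalar} scalar times the iterated structure constant equals $1$. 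Once this is stated correctly your computation closes and reproduces the theorem --- the $\phi_{\chi}^{-1}$ in your final contribution is the right answer; it is the intermediate product identity that must carry the inverse. This is precisely the convention trap that the paper's functional-side proof sidesteps by using only the Remark \ref{scalar} scalars and never relating them to the structure constants of Proposition \ref{G-crosed str}.
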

\begin{proof}
As $A_{1}$ is a semisimple algebra.
Therefore,
$$1=\sum_{\chi\in\Sim(A_{1})}e_{\chi}=\sum_{\chi\in\Sim(A_{1})}\frac{\alpha_{\chi}}{\chi(\alpha_{\chi})}.$$
Then by definition of Frobenius $\star$-algebra, we have
$$\lambda=\sum_{\chi\in\Sim(A_{1})}\frac{\chi}{\chi(\alpha_{\chi})}.$$
Thus \begin{equation*} \begin{split}
\lambda(a_{1}a_{2}\cdots a_{n}) &=\sum_{\chi\in\Sim(A_{1})}\frac{\chi(a_{1}a_{2}\cdots a_{n})}{\chi(\alpha_{\chi})}\\
&= \sum_{\chi\in\Sim(A_{1})^{\langle g_{1},g_{2},\cdots,g_{n}\rangle}}\frac{\chi(a_{1}a_{2}\cdots a_{n})}{\chi(\alpha_{\chi})}\ \ (\mbox{by\ Lemma}\ \ref{stabilizer}) \\
&=\sum_{\chi\in \Sim(A_{1})^{\langle g_{1},g_{2},\cdots, g_{n}\rangle}}\frac{\chi^{g_{1}}(a_{1})\chi^{g_{2}}(a_{2})\cdots\chi^{g_{n}}(a_{n})}{\chi(\alpha_{\chi})\cdot\chi\circ\phi(g_{1},g_{2},\cdots,g_{n})}.
\end{split}
\end{equation*}
The last equality follows from the Remark \ref{scalar}.
\end{proof}
Let $A$ be a strict $G$-crossed Frobenius $\star$-algebra.
Let $g,h\in G$ be any elements.
Suppose $P_{g}$ denotes an orthonormal basis of $A_{g}$.
We define the element 
$$\Omega_{g,h}\coloneqq\sum_{x\in P_{g}}x\cdot h(x^{\star})\in A_{ghg^{-1}h^{-1}}.$$
Let $P'_{g}$ be an another orthonormal basis of $A_{g}$ and let $U$ be the  change of basis matrix. Then $U$ is a $P'_{g}\times P_{g}$-unitary matrix.
Consider,
\begin{equation*}
\sum_{y\in P'_{g}}y\cdot h(y^{\star})=\sum_{x,x'\in P_{g}}\left( \sum_{y\in P'_{g}}U_{y,x}\overline{U_{y,x'}}\right)x\cdot h(x'^{\star})=\sum_{x\in P_{g}}x\cdot h(x^{\star}).
\end{equation*}
The last equality follows from the unitarity of $U$.
This implies  that the element $\Omega_{g,h}$ is independent of choice of orthonormal basis.
\begin{lemma}\label{lemma:verlinde}
Let $g,h\in G$ and let $\chi\in\Sim(A_{1})^{[g,h]}$ be a character fixed by the commutator $[g,h]=ghg^{-1}h^{-1}$.
Then $$\chi^{[g,h]}(\Omega_{g,h})=
\begin{cases}
\frac{\chi(\alpha_{\chi})}{\chi\circ\phi(g,h,g^{-1},h^{-1})}\ \ \ \ \ \mbox{if}\ \chi\in\Sim(A_{1})^{<g,h>}\\
0\ \ \ \ \ \ \ \ \ \ \ \ \ \ \ \ \ \ \ \ \mbox{otherwise.}
\end{cases}$$
\end{lemma}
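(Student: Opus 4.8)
The plan is to use that $\Omega_{g,h}$ is independent of the chosen orthonormal basis of $A_g$ (as noted just before the statement) and to evaluate it in the basis adapted to the twisted characters. I would normalize the $e_\chi^g$ as in Corollary~\ref{cocycle} so that $e_\chi^g e_\chi^{g^{-1}}=e_\chi$; then $\lambda(e_\chi)=1/\chi(\alpha_\chi)$ (from $\lambda=\sum_\chi \chi/\chi(\alpha_\chi)$ in the proof of Theorem~\ref{Verlinde formula:genus 0}) and $(e_\chi^{g^{-1}})^\star=e_\chi^g$ (its $\star$ is supported on $\chi$ and $e_\chi^{g^{-1}}(e_\chi^{g^{-1}})^\star=e_\chi$ by the norm computation in Corollary~\ref{coro: properties}). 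By Corollary~\ref{coro: properties}(iii),(iv) the vectors $e_\psi^{g^{-1}}/\sqrt{\lambda(e_\psi)}$, $\psi\in\Sim(A_1)^g$, form an orthonormal basis of $A_g$, so
\[
\Omega_{g,h}=\sum_{\psi\in\Sim(A_1)^g}\frac{1}{\lambda(e_\psi)}\,e_\psi^{g^{-1}}\,h\big((e_\psi^{g^{-1}})^\star\big).
\]

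I would then localize this sum. The factor $e_\psi^{g^{-1}}$ is supported on $\psi$ while $h((e_\psi^{g^{-1}})^\star)$ is supported on ${}^{h}\psi$, so by the orthogonality $\alpha_\chi^{a}\alpha_{\chi'}^{b}=0$ for $\chi\neq\chi'$ of Corollary~\ref{coro: properties}(i) each summand vanishes unless $\psi={}^{h}\psi$, i.e. $\psi\in\Sim(A_1)^{h}$; combined with $\psi\in\Sim(A_1)^{g}$ only $\psi\in\Sim(A_1)^{\langle g,h\rangle}$ survive, and each surviving summand lies in a line $\mathbb{C}\,e_\psi^{\cdots}\subseteq A_{[g,h]}$. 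Since $\chi^{[g,h]}$ annihilates the $\psi$-isotypic part of $A_{[g,h]}$ for $\psi\neq\chi$, the whole expression is $0$ whenever $\chi\notin\Sim(A_1)^{\langle g,h\rangle}$: either $\chi$ is not among the indices $\psi\in\Sim(A_1)^g$, or the single term $\psi=\chi$ already vanishes because ${}^{h}\chi\neq\chi$. This disposes of the ``otherwise'' case.

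For $\chi\in\Sim(A_1)^{\langle g,h\rangle}$ only $\psi=\chi$ contributes, so $\chi^{[g,h]}(\Omega_{g,h})=\frac{1}{\lambda(e_\chi)}\chi^{[g,h]}\big(e_\chi^{g^{-1}}h(e_\chi^{g})\big)$. Here I would apply Remark~\ref{scalar} to the homogeneous factors $e_\chi^{g^{-1}}\in A_g$ and $h(e_\chi^{g})\in A_{hg^{-1}h^{-1}}$ (whose grades multiply to $[g,h]$), together with the action formula of Proposition~\ref{G-crosed str}(3), which for ${}^{h}\chi=\chi$ gives $h(e_\chi^{g})=\frac{\phi_\chi(h,g)}{\phi_\chi(hgh^{-1},h)}e_\chi^{hgh^{-1}}$. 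Using $\chi^{t}(e_\chi^{t^{-1}})=1$ for the two twisted-character evaluations and $1/\lambda(e_\chi)=\chi(\alpha_\chi)$, this produces
\[
\chi^{[g,h]}(\Omega_{g,h})=\chi(\alpha_\chi)\,\frac{\phi_\chi(h,g)}{\phi_\chi(g,hg^{-1}h^{-1})\,\phi_\chi(hgh^{-1},h)}.
\]

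It then remains to identify this scalar with $\chi(\alpha_\chi)/\phi_\chi(g,h,g^{-1},h^{-1})$, i.e. to prove $\phi_\chi(g,h,g^{-1},h^{-1})\,\phi_\chi(h,g)=\phi_\chi(g,hg^{-1}h^{-1})\,\phi_\chi(hgh^{-1},h)$. I would expand $\phi_\chi(g,h,g^{-1},h^{-1})=\phi_\chi(g,h)\phi_\chi(gh,g^{-1})\phi_\chi(ghg^{-1},h^{-1})$ (the iterated form of the scalars in Remark~\ref{scalar}) and verify the identity in the central extension $G'_\chi$ of Corollary~\ref{cocycle}: writing $u_a u_b=\phi_\chi(a,b)u_{ab}$ with $u_{a^{-1}}=u_a^{-1}$ and central scalars, both sides reduce to $u_g u_h u_g^{-1}u_h^{-1}u_{[g,h]}^{-1}$ once the lifts $u_{hgh^{-1}},u_{hg^{-1}h^{-1}}$ are replaced by $u_h u_g^{\pm1}u_h^{-1}$, the two conjugation discrepancies cancelling because $u_{hgh^{-1}}u_{hg^{-1}h^{-1}}=1$. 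This cocycle bookkeeping is the one genuinely delicate step — everything preceding it is support-tracking and the definitions. Substituting the identity yields $\chi^{[g,h]}(\Omega_{g,h})=\chi(\alpha_\chi)/\phi_\chi(g,h,g^{-1},h^{-1})=\chi(\alpha_\chi)/\big(\chi\circ\phi(g,h,g^{-1},h^{-1})\big)$, completing the proof.
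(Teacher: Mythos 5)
Your proposal is correct, but it takes a genuinely different route from the paper's own proof. The paper keeps the arbitrary orthonormal basis $P_{g}$ from the definition of $\Omega_{g,h}$: it applies Remark~\ref{scalar} termwise, uses Lemma~\ref{stabilizer} for the first vanishing statement, and then recognizes $\chi^{hg^{-1}h^{-1}}\circ h$ as a scalar multiple of the twisted character $(\chi\circ h)^{g^{-1}}$, so that orthogonality of twisted characters (cited from \cite{Twisted}) both kills the sum when ${}^{h}\chi\neq\chi$ and, via $\sum_{x\in P_{g}}\chi^{g}(x)\overline{\chi^{g}(x)}=\chi(\alpha_{\chi})$, produces the factor $\chi(\alpha_{\chi})$; the only cocycle identity it then needs, $\phi_{\chi}(g,h,g^{-1},h^{-1})=\phi_{\chi}(g,hg^{-1}h^{-1})\,\phi_{\chi}(h,g^{-1},h^{-1})$, is immediate from the iterated definition in Remark~\ref{scalar}. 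You instead invoke basis-independence of $\Omega_{g,h}$ to pass to the adapted orthonormal basis $e_{\psi}^{g^{-1}}/\sqrt{\lambda(e_{\psi})}$, $\psi\in\Sim(A_{1})^{g}$, so that the sum collapses by support considerations to the single term $\psi=\chi$ (disposing of both vanishing cases at once); the factor $\chi(\alpha_{\chi})=1/\lambda(e_{\chi})$ then comes from the normalization, and the cocycle factor from Proposition~\ref{G-crosed str}(3) together with an identity in the central extension of Corollary~\ref{cocycle}. Your route buys a sum-free, purely structural computation that avoids the character-orthogonality machinery; its cost is the identity $\phi_{\chi}(g,h,g^{-1},h^{-1})\,\phi_{\chi}(h,g)=\phi_{\chi}(g,hg^{-1}h^{-1})\,\phi_{\chi}(hgh^{-1},h)$, which you rightly flag as the delicate step. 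Your verification of it is correct: with the normalizations $e_{\chi}^{a}e_{\chi}^{a^{-1}}=e_{\chi}$ (imposed for $a=g,h$ and also for the conjugate $a=hgh^{-1}$), both sides of that identity equal $\chi^{[g,h]}\bigl(e_{\chi}^{g^{-1}}h(e_{\chi}^{g})\bigr)^{-1}$, exactly as your central-extension argument shows. The one thing you should state explicitly (the paper is equally casual about it) is that the twisted characters $\chi^{a}$, and hence the elements $e_{\chi}^{a}$ and the scalars $\phi_{\chi}$, must be fixed consistently on each of the cyclic subgroups $\langle g\rangle$, $\langle h\rangle$, $\langle hgh^{-1}\rangle$, $\langle [g,h]\rangle$ (with $\chi^{a}$ and $\chi^{a^{-1}}$ restricted from one common extension $\widetilde{\chi}$), since each is defined only up to a root of unity; with this understood, your uses of $\chi^{t}(e_{\chi}^{t^{-1}})=1$, of $(e_{\chi}^{g^{-1}})^{\star}=e_{\chi}^{g}$, and your identification of the structure constants of Corollary~\ref{cocycle} with the scalars of Remark~\ref{scalar} are all justified, and the residual root-of-unity ambiguity cancels against the same ambiguity present in $\chi^{[g,h]}(\Omega_{g,h})$ itself.
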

\begin{proof}
Consider,
$$\chi^{[g,h]}(\Omega_{g,h})=\chi^{[g,h]}\left(\sum_{x\in P_{g}}x\cdot h(x^{\star})\right)=\sum_{x\in P_{g}}\chi^{[g,h]}(x\cdot h(x^{\star})).$$
By Lemma \ref{stabilizer}, if $\chi^{[g,h]}(x\cdot h(x^{\star}))\neq 0$ for some $x\in P_{g}$ then we have $\chi\in\Sim(A_{1})^{<g,hg^{-1}h^{-1}>}$.
Otherwise each individual term in the summation and hence $\chi^{[g,h]}(\Omega_{g,h})$ must be zero.

Let  us assume that $\chi\in\Sim(A_{1})^{<g,hg^{-1}h^{-1}>}$. Then in this case  we get
\begin{equation}\label{eq:omega}
    \begin{split}
\chi^{[g,h]}(\Omega_{g,h})
&=\sum_{x\in P_{g}}\frac{\chi^{g}(x)\cdot\chi^{hg^{-1}h^{-1}}(h(x^{\star}))}{\chi\circ\phi(g,hg^{-1}h^{-1})}\\
&=\sum_{x\in P_{g}}\frac{\chi^{g}(x)\cdot(\chi^{hg^{-1}h^{-1}}\circ h(x^{\star}))}{\chi\circ\phi(g,hg^{-1}h^{-1})}.
\end{split}
\end{equation}
Recall that here $\chi^{hg^{-1}h^{-1}}:A_{hg^{-1}h^{-1}}\rightarrow\C$ is the restriction of some choice of character 
$\widetilde{\chi^{hg^{-1}h^{-1}}}:A_{<hg^{-1}h^{-1}>}\rightarrow\C$ extending $\chi:A_{1}\rightarrow \C$.
Hence $\widetilde{\chi^{hg^{-1}h^{-1}}\circ h}:A_{<g^{-1}>}\rightarrow\C$ is a character extending $\chi\circ h:A_{1}\rightarrow\C$.
So the composition $\chi^{hg^{-1}h^{-1}}\circ h:A_{g^{-1}}\rightarrow\C$ differ from the choosen twisted character $(\chi\circ h)^{g^{-1}}$ by some $o(g)^{th}$-root of unity.

 By the orthogonality of twisted characters, we get that $\chi^{[g,h]}(\Omega_{g,h})=0$ if $\chi\neq\chi\circ h$.
In other words, we have proved that if $\chi^{[g,h]}(\Omega_{g,h})\neq0$ then we must have $\chi\in\Sim(A_{1})^{<g,h>}$.

Let us assume that $g,h\in G_{\chi}$ then by twisted orthogonality relations we have 
$$\sum_{x\in P_{g}}\chi^{g}(x)\overline{\chi^{g}(x)}=\sum_{x\in P_{g}}\chi^{g}(x)\chi^{g^{-1}}(x^{\star})=\chi(\alpha_{\chi})$$
and  $$\chi^{hg^{-1}h^{-1}}\circ h=\frac{\chi^{g^{-1}}}{\chi\circ\phi(h,g^{-1},h^{-1})}.$$
Thus combining this with equation (\ref{eq:omega}) we get that
$$\chi^{[g,h]}(\Omega_{g,h})=\frac{\chi(\alpha_{\chi})}{\chi\circ\phi(g,h,g^{-1},h^{-1})}.$$
This prove the lemma.
\end{proof}
Now using the Theorem \ref{Verlinde formula:genus 0} and Lemma \ref{lemma:verlinde} we obtain the following:
\begin{corollary}[twisted Verlinde formula for any genus]\label{Verlinde formula:any genus}
Let $r,s$ be non-negative integers and let $g_{1},g_{2},\cdots,\\
g_{r},h_{1},h_{2},\cdots,h_{r},m_{1},\cdots,m_{s}\in G$ be any elements such that
$[g_{1},h_{1}]\cdots [g_{r},h_{r}]\cdot m_{1}\cdots m_{s}=1$.
Let $G^{\circ}\leq G$ be a subgroup of $G$ generated by
$g_{i},h_{i},m_{j}$.
Let $a_{j}\in A_{m_{j}}$ then 
$$\lambda(\Omega_{g_{1},h_{1}}\cdots\Omega_{g_{r},h_{r}}\cdot a_{1}\cdots a_{s})=\sum_{\chi\in\Sim(A_{1})^{G^{\circ}}}\frac{\chi(\alpha_{\chi})^{r-1}\chi^{m_{1}}(a_{1})\cdots\chi^{m_{s}}(a_{s})}{\chi\circ\phi(g_{1},h_{1},g_{1}^{-1},h_{1}^{-1},\cdots, m_{1},\cdots, m_{s})}.$$
\end{corollary}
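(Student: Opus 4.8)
The plan is to derive the general-genus formula by iterating the genus-0 formula from Theorem~\ref{Verlinde formula:genus 0} and feeding in the $\Omega$-computation from Lemma~\ref{lemma:verlinde}. First I would observe that the product $\Omega_{g_1,h_1}\cdots\Omega_{g_r,h_r}\cdot a_1\cdots a_s$ lives in $A_{w}$ where $w=[g_1,h_1]\cdots[g_r,h_r]\cdot m_1\cdots m_s=1$, so $\lambda$ is being applied to an element of $A_1$, and Theorem~\ref{Verlinde formula:genus 0} applies directly. Treating the $2r+s$ factors $\Omega_{g_i,h_i}$ and $a_j$ as the inputs to the genus-0 formula, I would write
\[
\lambda(\Omega_{g_1,h_1}\cdots\Omega_{g_r,h_r}\cdot a_1\cdots a_s)=\sum_{\chi\in\Sim(A_1)^{G^{\circ}}}\frac{\chi^{[g_1,h_1]}(\Omega_{g_1,h_1})\cdots\chi^{[g_r,h_r]}(\Omega_{g_r,h_r})\cdot\chi^{m_1}(a_1)\cdots\chi^{m_s}(a_s)}{\chi(\alpha_\chi)\cdot\chi\circ\phi(g_1,h_1,g_1^{-1},h_1^{-1},\dots,m_1,\dots,m_s)}.
\]
Here the support restriction to $\Sim(A_1)^{\langle g_i,h_i,m_j\rangle}=\Sim(A_1)^{G^{\circ}}$ is forced by Lemma~\ref{stabilizer} exactly as in the genus-0 proof.

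Next I would substitute the value of each $\chi^{[g_i,h_i]}(\Omega_{g_i,h_i})$ supplied by Lemma~\ref{lemma:verlinde}. For $\chi\in\Sim(A_1)^{G^{\circ}}$ we certainly have $\chi\in\Sim(A_1)^{\langle g_i,h_i\rangle}$ for each $i$, so the nonzero branch of the lemma applies and each factor contributes $\chi(\alpha_\chi)/\bigl(\chi\circ\phi(g_i,h_i,g_i^{-1},h_i^{-1})\bigr)$. Multiplying the $r$ such factors produces a numerator contribution of $\chi(\alpha_\chi)^{r}$ together with the product of the cocycle scalars $\chi\circ\phi(g_i,h_i,g_i^{-1},h_i^{-1})$ in the denominator. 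Combining with the overall $\chi(\alpha_\chi)^{-1}$ already present gives the net power $\chi(\alpha_\chi)^{r-1}$ appearing in the claimed formula.

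The one genuinely delicate point is the bookkeeping of the cocycle scalars $\chi\circ\phi$: I must check that the product of the local scalars $\chi\circ\phi(g_i,h_i,g_i^{-1},h_i^{-1})$ coming from Lemma~\ref{lemma:verlinde} merges correctly with the single multi-argument scalar $\chi\circ\phi(g_1,h_1,g_1^{-1},h_1^{-1},\dots)$ produced by the genus-0 formula, so that they collapse into the one denominator factor $\chi\circ\phi(g_1,h_1,g_1^{-1},h_1^{-1},\dots,m_1,\dots,m_s)$ written in the statement. This follows from the associativity (cocycle) relation in Remark~\ref{scalar}: regrouping the product $\chi^{g_1}(\cdot)\cdots$ of twisted characters against the fully expanded sequence versus the partially grouped sequence differs precisely by the ratio of the relevant $\phi_\chi$ scalars, so everything cancels to leave the asserted single scalar. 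I expect this cocycle-accounting step to be the main obstacle, since one must be careful that the twisted characters $\chi^{[g_i,h_i]}$ used inside $\Omega$ are normalized consistently with the $\chi^{g},\chi^{h}$ appearing in the genus-0 expansion; granting the normalizations of Lemma~\ref{lemma:verlinde}, the remaining verification is a routine application of Remark~\ref{scalar}.
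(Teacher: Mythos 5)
Your proposal is correct and is essentially the paper's own proof: the paper obtains this corollary precisely by applying the genus-0 formula (Theorem \ref{Verlinde formula:genus 0}) to the factors $\Omega_{g_i,h_i}\in A_{[g_i,h_i]}$ and $a_j\in A_{m_j}$, substituting the values $\chi^{[g_i,h_i]}(\Omega_{g_i,h_i})$ from Lemma \ref{lemma:verlinde}, and absorbing the cocycle scalars via Remark \ref{scalar}. The only quibble is one of attribution: the cut-down of the summation from $\Sim(A_{1})^{\langle [g_i,h_i],m_j\rangle}$ to $\Sim(A_{1})^{G^{\circ}}$ is supplied by the vanishing branch of Lemma \ref{lemma:verlinde} (not by Lemma \ref{stabilizer} alone, which only forces invariance under the graded components actually present), but since you invoke that branch as well, your argument is complete.
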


\subsection{Examples coming from fusion categories }\label{sec:motivation}
In this subsection we will see that an important class examples of $G$-crossed Frobenius $\s$-algebra arises from the theory of fusion categories.
We will begin by recalling some definition and results from  \cite{DGNO},\cite{Onfusioncategories}, \cite{FusionCA}, \cite{Turaev1999HomotopyFT}.

A \textit{tensor category} (or \textit{monoidal $\C$-linear category}) is a $\C$-linear abelian category $\mathcal{C}$ with a structure of monoidal category such that
the bifunctor $\otimes:\mathcal{C}\times\mathcal{C}\rightarrow\mathcal{C}$ is bilinear on morphisms.
By a \textit{fusion category} we mean a $\C$-linear semisimple rigid tensor category $\mathcal{C}$ with finitely many isomorphism classes of simple objects, finite-dimensional spaces of morphisms, and such that the unit object $\mathbbm{1}$ of $\mathcal{C}$ is simple.
 A fusion category is \textit{braided} if it is equipped with a natural isomorphism $c_{X,Y}:X\otimes Y\xrightarrow{\sim}Y\otimes X$ satisfying the hexagon axioms.

\begin{remark}
Let $\mathcal{C}$ be a fusion category. The tensor product on $\mathcal{C}$ induces the ring structure on Grothendieck ring.
We will denote complexified Grothendieck ring of $\mathcal{C}$ by $K(\mathcal{C})$.
 We have the non-degenerate symmetric linear functional
$\lambda :K(\mathcal{C})\rightarrow \C$ which represents the coefficient of the class of the unit $[\mathbbm{1}]$ in any element of $K(\mathcal{C})$.
The duality in $\mathcal{C}$ induces a $\star$-map on
 $K(\mathcal{C})$, which is defined to be semilinear.
 Thus $K(\mathcal{C})$ becomes a Frobenius $\s$-algebra.
Moreover, if $\mathcal{B}$ is braided fusion category then $K(\mathcal{B})$ is a commutative Frobenius $\star$-algebra.
\end{remark}

\begin{definition}
 Let $G$ be a finite group. A $G$-\textit{grading} on a tensor category $\mathcal{C}$ is a decomposition
$$\mathcal{C}=\bigoplus_{g\in G}\mathcal{C}_{g}$$
into a direct sum of full abelian subcategories such that tensor product $\otimes$ maps $\mathcal{C}_{g}\times\mathcal{C}_{h}$ to $\mathcal{C}_{gh}$ for all $g,h \in G$.
In this case, trivial component  $\mathcal{C}_{1}$ is a full monoidal subcategory and each $\mathcal{C}_{g}$ is $\mathcal{C}_{1}$-bimodule category.
\end{definition}

\begin{remark}
Let $\mathcal{C}$ be a fusion category,
and $\mathcal{M}$ a right $\mathcal{C}$-module category. 
Let $\mathcal{M}^{op}$  be the category opposite to $\mathcal{M}$. 
Then $\mathcal{M}^{op}$ is a left $\mathcal{C}$-module category with the $\mathcal{C}$-action $\odot$ given
by $X\odot M \coloneqq M\otimes\prescript{\s}{}{X}$.
Similarly, if $\mathcal{N}$ is a left $\mathcal{C}$-module category,
then $\mathcal{N}^{op}$ is a right $\mathcal{C}$-module category, with the $\mathcal{C}$-action $\cdot$ given by
$N\cdot X\coloneqq X^{\s}\otimes N$.
Note that $(\mathcal{M}^{op})^{op}$ is canonically equivalent to $\mathcal{M}$
as a $\mathcal{C}$-module category.
If $\mathcal{M}$ is a $\mathcal{C}$-(left)module category then $K(\mathcal{M})$ is a $K(\mathcal{C})$-(left)module.
\end{remark}

\begin{definition}
A $(\mathcal{C},\mathcal{D})$-bimodule category $\mathcal{M}$ is said to be  \textit{invertible} if there exist bimodule equivalences:
 $$\mathcal{M}\boxtimes_{\mathcal{C}}\mathcal{M}^{op}\cong \mathcal{D}\ \mbox{and}\ \mathcal{M}^{op}\boxtimes_{\mathcal{D}}\mathcal{M}\cong \mathcal{C}.$$
\end{definition}

Let $\B$ be a braided fusion category, and $\mathcal{M}$ be a left $\B$-module category. Then $\mathcal{M}$ is automatically a $\B$-bimodule category(using the braiding on $\B$).
We will say a $\B$-module category is invertible if it is invertible as a $\B$-bimodule category.
Let $\Pic(\B)$ denotes the group of \textit{invertible} $\B$-module categories and $\underline{\Pic}(\B)$ denotes the categorical $1$-group whose objects are invertible $\B$-module categories and $1$-morphisms are equivalences of $\B$-module categories, see \cite{FusionCA}.
Now we will state some results without proof.
\begin{lemma}\cite[\textsection 8.4]{FusionCA}\label{asso:category}
Let $G$ be a finite group and $\B$ be a non-degenerate braided fusion category.
Let $c:G\rightarrow\Pic(\B)$ be a group homomorphism $i.e.$ for each $g\in G$ there is an invertible $\B$-module category $\mathcal{C}_{g}$ and for each pair $g,h\in G$ there is an equivalence of $\B$-module categories $M_{g,h}:\mathcal{C}_{g}\boxtimes_{B}\mathcal{C}_{h}\cong\mathcal{C}_{gh}$.
Then the following diagram:
\begin{center}
\begin{tikzcd}[column sep = 2cm]
{\mathcal{C}_{f}\boxtimes_{\B} \mathcal{C}_{g}\boxtimes_{\B}\mathcal{C}_{h}}\arrow[r, "M_{f,g}\boxtimes Id_{\mathcal{C}_{h}}"]\arrow[d, swap, "Id_{\mathcal{C}_{f}}\boxtimes M_{g,h}"]&
{\mathcal{C}_{fg}\boxtimes_{\B} \mathcal{C}_{h}}\arrow[d, "M_{fg,h}"] \\
{\mathcal{C}_{f}\boxtimes_{\B} \mathcal{C}_{gh}}\arrow[r, "M_{f,gh}"]&
\mathcal{C}_{fgh}
\end{tikzcd}
\end{center}
commutes upto an element $T\in H^{3}(G,\mathcal{O}_{\B}^{\times})$, where $\mathcal{O}_{\B}^{\times}$ denotes the abelian group of isomorphism classes of invertible objects in $\B$.
\end{lemma}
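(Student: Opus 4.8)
The plan is to read this off as the obstruction theory of the categorical group $\underline{\Pic}(\B)$, exactly as in \cite[\S 8.4]{FusionCA}. Concretely, $\underline{\Pic}(\B)$ is a categorical $2$-group whose group of isomorphism classes of objects is $\pi_0=\Pic(\B)$ and whose group of isomorphism classes of module autoequivalences of the unit object $\B$ is $\pi_1\cong\mathcal{O}_\B^{\times}$: a $\B$-module autoequivalence of $\B$ (acting on itself) is given by tensoring with a necessarily invertible object, so its isomorphism classes are precisely the invertible objects of $\B$. The homomorphism $c$ is exactly a group map into $\pi_0$, and the asserted class $T$ is the pullback along $c$ of the associator (equivalently, the first Postnikov invariant) of $\underline{\Pic}(\B)$. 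Thus the whole statement is an instance of Sinh's classification of $2$-groups: a map $G\to\pi_0$ lifts to a morphism of $2$-groups if and only if the pulled-back Postnikov class in $H^3(G,\pi_1)=H^3(G,\mathcal{O}_\B^{\times})$ vanishes.

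Carrying this out at the level of cochains, first I would observe that for each triple $f,g,h\in G$ both composites around the displayed square are equivalences of $\B$-module categories $\mathcal{C}_f\boxtimes_\B\mathcal{C}_g\boxtimes_\B\mathcal{C}_h\to\mathcal{C}_{fgh}$, since the source is invertible (a relative tensor product of invertibles) and the target is invertible. The set of isomorphism classes of such equivalences is a torsor over $\pi_0\,\mathrm{Aut}_\B(\mathcal{C}_{fgh})\cong\mathcal{O}_\B^{\times}$, the module autoequivalences acting by tensoring on the target. I would then define $T(f,g,h)\in\mathcal{O}_\B^{\times}$ to be the unique invertible object whose action carries the isomorphism class of one composite to that of the other; this is the precise sense in which the square commutes only ``up to $T(f,g,h)$''. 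Here $\mathcal{O}_\B^{\times}$ carries the $G$-action induced by $c$ through transport of autoequivalences along the $\mathcal{C}_g$, which is the correct (twisted) coefficient module.

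It then remains to check that $T$ is a $3$-cocycle and that its class is independent of choices. For the cocycle condition I would expand, for each $4$-tuple $e,f,g,h$, the two reassociations of $\mathcal{C}_e\boxtimes_\B\mathcal{C}_f\boxtimes_\B\mathcal{C}_g\boxtimes_\B\mathcal{C}_h\to\mathcal{C}_{efgh}$; Mac Lane coherence for $\boxtimes_\B$ forces these to agree, and comparing the two ways of decomposing the discrepancy into the elementary squares yields exactly $\partial T=1$, so $T\in Z^3(G,\mathcal{O}_\B^{\times})$. For well-definedness, the only freedom (since $c$ records $\mathcal{C}_g$ and $M_{g,h}$ only up to equivalence) is to replace each $M_{g,h}$ by an isomorphic equivalence, i.e.\ to twist it by an invertible object $\mu(g,h)\in\mathcal{O}_\B^{\times}$; tracing this through the square shows that $T$ changes by the coboundary $\partial\mu$. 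Hence $T\coloneqq[T]\in H^3(G,\mathcal{O}_\B^{\times})$ is the desired well-defined obstruction.

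The hard part will be making the torsor identifications of the previous paragraph genuinely canonical and compatible: one must fix, once and for all and in a $G$-equivariant way, the isomorphism $\pi_0\,\mathrm{Aut}_\B(\mathcal{C}_x)\cong\mathcal{O}_\B^{\times}$ for every object $x$, check that it is compatible with composition of equivalences, and verify that the $G$-action used on the coefficients is precisely the one induced by $c$. This is the bookkeeping that turns the $2$-categorical coherence of $\boxtimes_\B$ into an honest cochain computation; once it is in place, the cocycle and coboundary identities reduce to the pentagon for $\boxtimes_\B$, completing the argument and recovering \cite[\S 8.4]{FusionCA}.
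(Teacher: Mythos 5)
The paper offers no proof of this lemma at all: it is stated as a citation of \cite[\textsection 8.4]{FusionCA}, and your argument is precisely the argument of that reference --- $T$ is the pullback along $c$ of the Postnikov invariant (associator) of the Picard $2$-group of $\B$, with $\pi_1\cong\mathcal{O}_{\B}^{\times}$ identified via module autoequivalences, the cocycle identity coming from coherence of $\boxtimes_{\B}$, and independence of choices from the coboundary computation when the $M_{g,h}$ are twisted. Your reconstruction is correct and takes essentially the same route as the source the paper relies on.
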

\begin{remark} \cite[\textsection 5]{FusionCA}\label{pic and eqbr}
Let $\B$ be a non-degenerate braided fusion category then $\underline{\Pic}(\B)\cong\underline{\mbox{EqBr}}(\B)$, where $\underline{\mbox{EqBr}}(\B)$ denotes a categorical $1$-group whose objects are braided autoequivalences of $\B$ and morphisms are natural  isomorphisms between braided autoequivalences.
\end{remark}

\begin{remark}\label{Group cohomology} We recall some results from  group cohomology.\\
(1) Let $M$ be a $G$-module.
The maps
$$\Psi^{i}:\Hom_{\ZG}(\mathbb{Z}[G^{i+1}],M)\rightarrow C^{i}(G,M)$$
defined by 
$$\Psi^{i}(\phi)(g_{1},g_{2},\cdots,g_{i})=\phi(1,g_{1},g_{1}g_{2},\cdots,g_{1}g_{2}\cdots g_{i})$$
are isomorphisms for all $i\geq 0$.
This provides isomorphisms between  complexes
$$\cdots\xrightarrow[]{d^{i-1}}\Hom_{\ZG}(\mathbb{Z}[G^{i}],M)\xrightarrow[]{d^{i}}\Hom_{\ZG}(\mathbb{Z}[G^{i+1}],M)\xrightarrow[]{d^{i+1}}\cdots$$
and 
$$\cdots\xrightarrow[]{d^{i-1}}C^{i}(G,M)\xrightarrow[]{d^{i}}C^{i+1}(G,M)\xrightarrow[]{d^{i+1}}$$
in the sense that $\Psi^{i+1}\circ d^{i+1}=d^{i}\circ\Psi^{i}$ for all $i\geq 0$.
Moreover, these isomorphisms are natural in the $G$-module $M$.\\
(2) Let $H$ be a subgroup of $G$ and let $M$ be a $H$-module.
The maps 
$$\Phi_{i}:\Hom_{\ZG}(Z[G^{i}],\CoInd_{H}^{G}(M))\rightarrow \Hom_{\mathbb{Z}[H]}(\mathbb{Z}[G^{i}],M)$$
defined by $$\Phi_{i}(\phi)(x)=\phi(x)(1)$$ are  isomorphisms for all $i\geq 0$.\\
(3) Let $H$ be a subgroup of $G$ then $\ZG$ is a free $\mathbb{Z}[H]$-module with basis parametrise by right coset representatives of $H$ in $G$.
Consider
\begin{center}
\begin{tikzcd}
\cdots\arrow[r]&
\mathbb{Z}[G^{i+1}]\arrow[r,"d^{i}"]\arrow[d,"\alpha_{i+1}"]&
\mathbb{Z}[G^{i}]\arrow[r]\arrow[d,"\alpha_{i}"]&
\cdots\arrow[r]&
\ZG\arrow[r,"\epsilon"]\arrow[d,"\alpha_{1}"]&
\mathbb{Z}\arrow[r]\arrow[d,"Id"]&
0\\
\cdots\arrow[r]&
\mathbb{Z}[H^{i+1}]\arrow[r,"d^{i}"]&
\mathbb{Z}[H^{i}]\arrow[r]&
\cdots\arrow[r]&
\mathbb{Z}[H]\arrow[r,"\epsilon"]&
\mathbb{Z}\arrow[r]&
0
\end{tikzcd}
\end{center}
where 
$$d^{i}(g_{0},g_{1},\cdots,g_{i})=\sum_{j=0}^{i}(-1)^{j}(g_{0},\cdots,g_{j-1},g_{j+1},\cdots,g_{i})$$
for all $i\geq1$,$\epsilon$ is the augmentation map.
Then there exists $\alpha_{i}$   a $\mathbb{Z}[H]$-module map such that $\epsilon\circ\alpha_{1}=\epsilon$ and  $\alpha_{i}\circ d^{i}=d^{i}\circ\alpha_{i+1}$ for all $i\geq 1$.\\
(4) Suppose that $0\rightarrow L\rightarrow M\rightarrow N\rightarrow 0$ is a short exact sequence of $G$-modules. 
Then there exists connecting homomorphisms $\delta^{*}:H^{i}(G,L)\rightarrow H^{i+1}(G,N)$  and a long exact sequence of abelian groups
$$0\rightarrow H^{0}(G,L)\rightarrow H^{0}(G,M)\rightarrow H^{0}(G,N)\xrightarrow{\delta^{0}} H^{1}(G,L)\rightarrow \cdots$$
Moreover, this construction is natural in the short exact sequence in the sense that any morphism of
short exact sequences gives a morphism of long exact sequences.
\end{remark}

\begin{remark}
Let $\B$ be a non-degenerate braided fusion category and let $c:G\rightarrow \Pic(\B)\cong \mbox{EqBr}(\B)$ be a group homomorphism. 
Then we have an action  of $G$ on $K(\B)$ by Frobenius $\star$-algebra automorphism.
This induces an action of $G$ on $\Sim(K(\B))$.
By \cite[\textsection 2]{Oncenters}, $K(\mathcal{C}_{g^{-1}})$ has basis  indexed by $\Sim(K(\B))^{g}=\{\chi\in\Sim(K(\B)):\prescript{g}{}{}\chi=\chi\}$. 
Moreover, the decomposition of $K(\mathcal{C}_{g^{-1}})$ into simple  $K(\B)$-module  is given by $$K(\mathcal{C}_{g^{-1}})\cong\bigoplus_{\chi\in\Sim(K(\B))^{g}}\C\cdot e_{\chi}^{g}$$
as a $K(\B)$-module.
For $\chi\in\Sim(K(\B))$, $G_{\chi}$ denotes the stabilizer of $\chi$ in $G$.
\end{remark}
Now using the definition of a tensor product of $\B$-module categories, we have  a bilinear map  on $K(\D)=\bigoplus_{g\in G}K(\mathcal{C}_{g})$ such that $K(\mathcal{C}_{g})\times K(\mathcal{C}_{h})$ maps to $K(\mathcal{C}_{gh})$.
By Lemma \ref{asso:category}, this map satisfies associativity property up to an element $T\in 
H^{3}(G,\mathcal{O}_{\B}^{\times})$.

As $\mathcal{O}_{\B}^{\times}$ is a $G$-stable subgroup of $K(\B)^{\times}$, so
we have a short exact sequence of $G$-modules,
\begin{equation}\label{eq: ses}
    0\rightarrow \mathcal{O}_{\B}^{\times}\xrightarrow{i} K(\B)^{\times}\xrightarrow{\pi} K(\B)^{\times}/\mathcal{O}_{\B}^{\times}\rightarrow 0.
\end{equation}
Using this short exact sequence we get a long exact sequence,
$$\cdots\rightarrow  H^{2}(G,K(\B)^{\times})\xrightarrow{\pi^{*}} H^{2}(G,K(\B)^{\times}/\mathcal{O}_{\B}^{\times})\xrightarrow{\delta} H^{3}(G,\mathcal{O}_{\B}^{\times})\xrightarrow{i^{*}} H^{3}(G,\mathcal{O}_{\B}^{\times})\rightarrow \cdots.$$
We will prove that $i^{*}(T)$ is the trivial element in $H^{3}(G,K(\B)^{\times})$.

\begin{prop}\label{prop: consrt t}
Let $c:G\rightarrow\Pic(\B)$ be a group homomorphism
and $K(\mathcal{D})=\oplus_{g\in G}K(\mathcal{C}_{g})$ be the corresponding $G$-graded $K(\B)$-bimodule.
Then there exists a $2$-cochain $\phi\in C^{2}(G,K(\B)^{\times})$ such that, 
$$\mbox{for}\ g,h\in G_{\chi},\ \ \ e_{\chi}^{g}\cdot e_{\chi}^{h}=\chi\circ\phi(g,h)e_{\chi}^{gh}.$$
Moreover, $\phi$ satisfies the $2$-cocycle condition modulo $\mathcal{O}_{\B}^{\times}$ \emph{i.e} $t\coloneqq\pi^{*}(\phi)\in H^{2}(G,K(\B)^{\times}/\mathcal{O}_{\B}^{\times})$, where $\pi$ as in (\ref{eq: ses}). In particular, $\phi$ satisfies the following relation:
$$\phi(fg,h)\cdot\phi(f,g)=T_{f,g,h}\cdot\phi(f,gh)\cdot\phi(g,h)\ \ \ \ \mbox{for all}\ f,g,h\in G.$$
\end{prop}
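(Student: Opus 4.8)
The plan is to construct $\phi$ in close analogy with the proof of Proposition \ref{G-crosed str}(3), the essential difference being that in the present categorical situation the graded multiplication is associative only up to the obstruction $T$ of Lemma \ref{asso:category}, so that $\phi$ will be a genuine $2$-cochain rather than a $2$-cocycle. First I would fix, for each $g\in G$ and each $\chi\in\Sim(K(\B))^{g}$, the basis element $e_{\chi}^{g}$ of the $\chi$-isotypic line of $K(\mathcal{C}_{g^{-1}})$ coming from the decomposition recalled just before the statement (normalized, via Remark \ref{decompo:centre of Frob star algebra}, so that $e_{\chi}e_{\chi}^{g}=e_{\chi}^{g}$). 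Using the idempotent decomposition of the $K(\B)$-module structure one has $e_{\chi'}e_{\chi}^{g}=\delta_{\chi,\chi'}e_{\chi}^{g}$ (this needs only associativity of the $K(\B)$-action, not of the triple graded product), so for $g,h\in G_{\chi}$ the product $e_{\chi}^{g}e_{\chi}^{h}$ lies in the $\chi$-isotypic line of the $gh$-component. Since the multiplication is induced by the equivalence $M_{g,h}\colon\mathcal{C}_{g}\boxtimes_{\B}\mathcal{C}_{h}\cong\mathcal{C}_{gh}$, the induced map of $\chi$-lines is an isomorphism, so $e_{\chi}^{g}e_{\chi}^{h}=\phi_{\chi}(g,h)\,e_{\chi}^{gh}$ for a well-defined scalar $\phi_{\chi}(g,h)\in\C^{\times}$. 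This gives a $2$-cochain $\phi_{\chi}\in C^{2}(G_{\chi},\C^{\times})$ for each orbit representative $\chi$ of the $G$-action on $\Sim(K(\B))$.

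Next I would assemble the $\phi_{\chi}$ into a single cochain $\phi\in C^{2}(G,K(\B)^{\times})$. As in the proof of Proposition \ref{G-crosed str}(3), I use the $G$-module decomposition $K(\B)^{\times}\cong\bigoplus\CoInd_{G_{\chi}}^{G}(\C^{\times})$, the sum taken over orbit representatives, together with the \emph{chain-level} coinduction and restriction isomorphisms of Remark \ref{Group cohomology}(2),(3). These identify a $G_{\chi}$-cochain valued in $\C^{\times}$ with a $G$-cochain valued in $\CoInd_{G_{\chi}}^{G}(\C^{\times})$, and summing over orbits produces $\phi\in C^{2}(G,K(\B)^{\times})$ whose $\chi$-component recovers $\phi_{\chi}$. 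By construction $\chi\circ\phi(g,h)=\phi_{\chi}(g,h)$ for all $g,h\in G_{\chi}$, which is exactly the asserted relation $e_{\chi}^{g}e_{\chi}^{h}=\chi\circ\phi(g,h)\,e_{\chi}^{gh}$.

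Finally I would compare the two bracketings of $e_{\chi}^{f}e_{\chi}^{g}e_{\chi}^{h}$ for $\chi\in\Sim(K(\B))^{\langle f,g,h\rangle}$, namely
$$(e_{\chi}^{f}e_{\chi}^{g})\,e_{\chi}^{h}=\phi_{\chi}(f,g)\,\phi_{\chi}(fg,h)\,e_{\chi}^{fgh},\qquad e_{\chi}^{f}\,(e_{\chi}^{g}e_{\chi}^{h})=\phi_{\chi}(g,h)\,\phi_{\chi}(f,gh)\,e_{\chi}^{fgh}.$$
By Lemma \ref{asso:category} the two associations of the triple tensor product differ precisely by $T_{f,g,h}\in\mathcal{O}_{\B}^{\times}$, whence $\phi(fg,h)\,\phi(f,g)=T_{f,g,h}\,\phi(f,gh)\,\phi(g,h)$, the stated relation. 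Since $T_{f,g,h}$ lies in $\mathcal{O}_{\B}^{\times}$, this says that $\partial\phi$ is valued in the subgroup $\mathcal{O}_{\B}^{\times}$, so that $\partial(\pi^{*}\phi)=\pi^{*}(\partial\phi)=0$; hence $t:=\pi^{*}(\phi)$ is a $2$-cocycle and defines a class in $H^{2}(G,K(\B)^{\times}/\mathcal{O}_{\B}^{\times})$ via the short exact sequence (\ref{eq: ses}). Moreover, by the very description of the connecting homomorphism of Remark \ref{Group cohomology}(4), the chosen chain-level trivialization makes $\partial\phi$ equal to the chosen representative $T_{f,g,h}$, so $t$ maps to $T$ under $\delta$.

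I expect the main obstacle to be the second step. The scalars $\phi_{\chi}(g,h)$ are defined directly only when $\chi$ is fixed by both $g$ and $h$, and promoting them to a genuine $K(\B)^{\times}$-valued cochain $\phi$ that is defined and invertible at \emph{every} $\chi\in\Sim(K(\B))$ and in a $G$-equivariant manner requires the coinduction bookkeeping of Remark \ref{Group cohomology} carried out at the cochain (not cohomology) level. The delicate points are to verify that the construction is independent of the chosen orbit representatives and of the normalizations $e_{\chi}^{g}$ (each well-defined only up to a root of unity, by Remark \ref{decompo:centre of Frob star algebra}), and to ensure that the resulting chain-level coboundary $\partial\phi$ equals the prescribed representative $T_{f,g,h}$ rather than merely a cocycle cohomologous to $T$.
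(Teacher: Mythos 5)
Your proposal follows essentially the same route as the paper's proof: extract $\phi_{\chi}\in C^{2}(G_{\chi},\C^{\times})$ from the products $e_{\chi}^{g}e_{\chi}^{h}$, assemble these into $\phi\in C^{2}(G,K(\B)^{\times})$ via the decomposition $K(\B)^{\times}\cong\bigoplus\CoInd_{G_{\chi}}^{G}(\C^{\times})$ and the chain-level isomorphisms of Remark \ref{Group cohomology}, and then invoke Lemma \ref{asso:category} to get the $T$-twisted cocycle relation, hence that $t=\pi^{*}(\phi)$ is a $2$-cocycle. The extra details you supply (non-vanishing of $\phi_{\chi}$ via the equivalence $M_{g,h}$, and the coinduction bookkeeping you flag as delicate) only elaborate points the paper's own proof glosses over in the same way.
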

\begin{proof}
For $g,h\in G_{\chi}$, we have $e_{\chi}^{g}\cdot e_{\chi}^{h}\in K(\mathcal{C}_{gh})$.
But the $K(\B)$-module structure on $K(\mathcal{C}_{gh})$ implies that there exists a $\phi_{\chi}\in C^{2}(G_{\chi},\C^{\times})$ such that
$$e_{\chi}^{g}\cdot e_{\chi}^{h}=\phi_{\chi}(g,h)e_{\chi}^{gh}.$$
By Lemma \ref{asso:category},   $\phi_{\chi}$ satisfies the  following relation: 
\begin{equation}\label{eq: for phi chi}
    \mbox{for}\ f,g,h\in G_{\chi},\ \ \ \phi_{\chi}(f,g)\phi_{\chi}(fg,h)=\chi(T_{f,g,h})\phi_{\chi}(f,gh)\phi_{\chi}(g,h).
\end{equation}
By Remark \ref{Group cohomology}, we get an element $\psi_{\chi}\in C^{2}(G,\CoInd_{G_{\chi}}^{G}(\C^{\times}))$.
We have $K(\B)^{\times}=\oplus\CoInd_{G_{\chi}}^{G}(\C^{\times})$, where the sum runs over the orbit representatives for the induced action of $G$ on $\Sim(K(\B))$.
Using this we can get an element $\phi\in C^{2}(G,K(\B)^{\times})$ such that for $\chi\in \Sim(K(\B))$
$$\chi(\phi(g,h))=\phi_{\chi}(g,h),\ \ \ \mbox{for all}\ g,h\in G_{\chi}.$$
This proves the first part of proposition.

Let us define an element $t\in C^{2}(G,K(\B)^{\times}/\mathcal{O}_{\B}^{\times})$ as follows:
$$t\coloneqq\pi^{*}(\phi)=\pi\circ\phi.$$
Using the (\ref{eq: for phi chi}), we get, 
$$ \phi(fg,h)\cdot\phi(f,g)=T_{f,g,h}\cdot\phi(f,gh)\cdot\phi(g,h)\ \ \ \ \mbox{for all}\ f,g,h\in G.$$
This implies that $\phi$ satisfies $2$-cocycle condition modulo $\mathcal{O}_{\B}^{\times}$
and $t$ is a $2$-cocycle.
This proves the proposition.
\end{proof}

\begin{corollary}\label{associativity}
Let $\delta:H^{2}(G,K(\B)^{\times}/\mathcal{O}_{\B}^{\times})\rightarrow H^{3}(G,\mathcal{O}_{\B}^{\times})$ be the connecting homomorphism associated with (\ref{eq: ses}).
Then $\delta(t)=T$, where $T$ as in Lemma \ref{asso:category}.
Hence, $i^{*}(T)$ is the trivial element in $H^{3}(G,K(\B)^{\times})$. 
\end{corollary}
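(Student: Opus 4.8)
The plan is to compute the connecting homomorphism $\delta$ directly at the level of cochains, using the standard snake-lemma description for the short exact sequence (\ref{eq: ses}), and to observe that the $2$-cochain $\phi$ produced in Proposition \ref{prop: consrt t} is precisely the lift of $t$ that this description requires. Recall how $\delta$ is computed: given the class $t\in H^{2}(G,K(\B)^{\times}/\mathcal{O}_{\B}^{\times})$ represented by the cocycle $t=\pi\circ\phi$, one first chooses a lift of $t$ to a $2$-cochain valued in $K(\B)^{\times}$; by construction $\phi$ itself is such a lift, since $\pi\circ\phi=t$. One then forms the inhomogeneous coboundary $\partial\phi\in C^{3}(G,K(\B)^{\times})$ (with the conventions of Remark \ref{Group cohomology}). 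Because $t$ is a cocycle, $\pi\circ\partial\phi=\partial(\pi\circ\phi)=\partial t=0$, so $\partial\phi$ takes values in $\ker\pi=i(\mathcal{O}_{\B}^{\times})$; writing $\partial\phi=i(\tau)$, the cochain $\tau\in C^{3}(G,\mathcal{O}_{\B}^{\times})$ is automatically a cocycle (as $i$ is injective and $\partial^{2}=0$), and by definition $\delta(t)=[\tau]$.

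First I would write out $\partial\phi(f,g,h)$ explicitly. This is exactly the combination appearing in the last displayed relation of Proposition \ref{prop: consrt t}, namely
$$\phi(fg,h)\,\phi(f,g)=T_{f,g,h}\,\phi(f,gh)\,\phi(g,h)\qquad(f,g,h\in G),$$
rearranged to isolate $T_{f,g,h}$. Thus $\partial\phi$ coincides with the image under $i$ of the $3$-cocycle $T$ from Lemma \ref{asso:category}, i.e. $\tau=T$, whence $\delta(t)=T$. The only point that needs care is matching conventions: one must check that the coboundary formula (including the handling of the $G$-action on $K(\B)^{\times}$) and the normalization of $T$ are aligned so that the rearranged relation yields exactly $T$ rather than $T^{-1}$. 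This is the sole genuine obstacle, and it is purely bookkeeping once the sign conventions already fixed in Proposition \ref{prop: consrt t} are in place; since that relation was derived from the categorical associativity of Lemma \ref{asso:category}, the identification is forced.

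Finally, the second assertion is immediate from exactness of the long exact sequence associated with (\ref{eq: ses}). Since consecutive maps satisfy $\operatorname{im}\delta=\ker i^{*}$, and we have just shown $T=\delta(t)\in\operatorname{im}\delta$, it follows that $i^{*}(T)=i^{*}(\delta(t))$ is the trivial element of $H^{3}(G,K(\B)^{\times})$.
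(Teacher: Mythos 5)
Your proposal is correct and is essentially the paper's own argument: the paper's (much terser) proof likewise takes $\phi$ from Proposition \ref{prop: consrt t} as the cochain-level lift of $t$, reads off $\delta(t)=T$ from the relation $\phi(fg,h)\,\phi(f,g)=T_{f,g,h}\,\phi(f,gh)\,\phi(g,h)$ via the standard description of the connecting homomorphism, and then concludes $i^{*}(T)=i^{*}(\delta(t))=0$ by exactness of the long exact sequence. Your explicit unpacking of the snake-lemma computation, and the flag about matching coboundary/normalization conventions, simply fills in details the paper leaves implicit.
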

\begin{proof}
By Prop. \ref{prop: consrt t} and by the definition of  connecting homomorphism $\delta$, we have $$\delta(t)=T \in H^{3}(G,\mathcal{O}_{\B}^{\times}).$$
Hence, $$i^{*}(T)=i^{*}(\delta(t))=i^{*}\circ\delta(t)=0\in H^{3}(G,K(\B)^{\times}).$$
This proves the result.
\end{proof}
\begin{remark}\label{remark: torsor and T}
Let $c:G\rightarrow\Pic(\B)$ be a group homomorphism.
Then \cite[Theorem 8.5]{FusionCA} implies that the set of all extensions of $c$ to a morphism of $1$-groups  $\underline{c}:G\rightarrow\underline{\Pic}(\B)$ forms a torsor over $H^{2}(G,\mathcal{O}_{\B}^{\times})$.
Moreover this torsor is non empty iff the obstruction $T\in H^{3}(G,\mathcal{O}_{\B}^{\times})$ as in Lemma \ref{asso:category}  vanishes.
Let us denote this torsor by $\mathcal{T}$.
\end{remark}
\begin{corollary}\label{Coro: troser to cohomology}
Let us assume that the torsor $\mathcal{T}$ is non-empty.
 Then there is a mapping $\Phi:\mathcal{T}\rightarrow H^{2}(G,K(\B)^{\times})$ such that $\pi^{*}(\Phi(\mathcal{T}))=t\in H^{2}(G,K(\B)^{\times}/O_{\B}^{\times})$.
 In particular, we can identify the torsor $\mathcal{T}/\mbox{ker}(i^{*})$ with $(\pi^{*})^{-1}(t)\subseteq H^{2}(G,K(\B)^{\times})$ as a torsor over $H^{2}(G,\mathcal{O}_{\B}^{\times})/\mbox{ker}(i^{*})$.
  \end{corollary}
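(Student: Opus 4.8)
The plan is to extract everything from the long exact sequence of the short exact sequence \eqref{eq: ses} together with the identity $\delta(t)=T$ proved in Corollary \ref{associativity}, and then to promote this to an equivariant statement by tracking the $H^2(G,\mathcal{O}_\B^\times)$-torsor structure on $\mathcal{T}$ supplied by Remark \ref{remark: torsor and T}. First I would settle non-emptiness of the target. Since $\mathcal{T}$ is assumed non-empty, Remark \ref{remark: torsor and T} gives $T=0$ in $H^3(G,\mathcal{O}_\B^\times)$; by Corollary \ref{associativity} this means $\delta(t)=T=0$, so exactness at $H^2(G,K(\B)^\times/\mathcal{O}_\B^\times)$ forces $t\in\operatorname{im}(\pi^*)$. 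Hence $(\pi^*)^{-1}(t)$ is a non-empty subset of $H^2(G,K(\B)^\times)$, and by exactness at $H^2(G,K(\B)^\times)$ it is a coset of $\ker(\pi^*)=\operatorname{im}(i^*)$, where $i^*\colon H^2(G,\mathcal{O}_\B^\times)\to H^2(G,K(\B)^\times)$; in particular it is a torsor over $H^2(G,\mathcal{O}_\B^\times)/\ker(i^*)\cong\operatorname{im}(i^*)$.

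Next I would construct $\Phi$. A point of $\mathcal{T}$ is a lift of $c$ to a morphism of $1$-groups $\underline{c}\colon G\to\underline{\Pic}(\B)$, that is, a coherent choice of the module equivalences $M_{g,h}$ of Lemma \ref{asso:category}. For such a coherent choice the associativity obstruction of Lemma \ref{asso:category} is trivialized, so that the factor $T_{f,g,h}$ appearing in Proposition \ref{prop: consrt t} disappears and the $2$-cochain $\phi$ built from these specific $M_{g,h}$ now satisfies the honest $2$-cocycle condition in $C^2(G,K(\B)^\times)$. I would set $\Phi(\underline{c}):=[\phi_{\underline{c}}]\in H^2(G,K(\B)^\times)$. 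One then checks that this class is insensitive to the residual scalar freedom in the basis vectors $e_\chi^g$ (rescaling them alters $\phi_{\underline c}$ only by a coboundary, assembled through $K(\B)^\times=\oplus\CoInd_{G_\chi}^G(\C^\times)$) and that $\pi\circ\phi_{\underline c}$ represents the canonical class $t$ of Proposition \ref{prop: consrt t}; this gives $\pi^*(\Phi(\underline{c}))=t$ for every $\underline{c}\in\mathcal{T}$.

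The decisive structural step is equivariance. I would show that the action of $\eta\in H^2(G,\mathcal{O}_\B^\times)$ on $\mathcal{T}$ from \cite{FusionCA} multiplies the tensorators $M_{g,h}$, and therefore the cocycle $\phi_{\underline c}$, by an $\mathcal{O}_\B^\times$-valued cocycle representing $\eta$, whence $\Phi(\eta\cdot\underline{c})=i^*(\eta)\cdot\Phi(\underline{c})$. Since $\mathcal{T}$ is a single $H^2(G,\mathcal{O}_\B^\times)$-orbit and $\pi^*\circ\Phi$ is constant equal to $t$, the image $\Phi(\mathcal{T})$ is precisely the coset $(\pi^*)^{-1}(t)$, and two lifts have equal image under $\Phi$ exactly when they differ by an element of $\ker(i^*)\subseteq H^2(G,\mathcal{O}_\B^\times)$. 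Thus $\Phi$ descends to a bijection $\mathcal{T}/\ker(i^*)\xrightarrow{\sim}(\pi^*)^{-1}(t)$ intertwining the residual actions of $H^2(G,\mathcal{O}_\B^\times)/\ker(i^*)\cong\operatorname{im}(i^*)=\ker(\pi^*)$, which is exactly the claimed identification of torsors.

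The main obstacle I anticipate is the middle step: verifying rigorously that a coherent $1$-group lift turns the cochain of Proposition \ref{prop: consrt t} into a genuine $2$-cocycle with $\pi\circ\phi_{\underline c}\sim t$, and that the $H^2(G,\mathcal{O}_\B^\times)$-torsor action of \cite{FusionCA} corresponds under $\Phi$ to multiplication by $i^*(\eta)$. This requires unwinding the definition of $\underline{\Pic}(\B)$ and carefully propagating the scalars $\chi(\phi(g,h))$ through the associativity coherence, matching them against relation \eqref{eq: for phi chi}. Once this bookkeeping is in place the equivariance, and hence the torsor identification, follow formally from the long exact sequence.
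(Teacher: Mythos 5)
Your proposal is correct and follows essentially the same route as the paper: both define $\Phi$ by feeding a coherent choice of tensorators $M_{g,h}$ (an element of $\mathcal{T}$) into Proposition \ref{prop: consrt t} — where coherence kills the factor $T_{f,g,h}$ and makes $\phi$ an honest cocycle — and both verify that twisting the $M_{g,h}$ by $\psi\in H^{2}(G,\mathcal{O}_{\B}^{\times})$ replaces $\phi$ by $\psi\cdot\phi$, so that $\Phi$ is a morphism of torsors with $\pi^{*}\circ\Phi\equiv t$. Your treatment is in fact slightly more complete, since you spell out via the long exact sequence why $(\pi^{*})^{-1}(t)$ is a nonempty coset of $\operatorname{im}(i^{*})$ and why equivariance plus transitivity forces $\Phi(\mathcal{T})=(\pi^{*})^{-1}(t)$, giving the quotient identification that the paper leaves implicit.
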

 \begin{proof}
 The elements of the torsor $\mathcal{T}$ are nothing but the isomorphism classes of $\B$-bimodule tensor products on $\D=\oplus_{g\in G}\mathcal{C}_{g}$.
 Choose $\B$-bimodule equivalences $M_{g,h}:\mathcal{C}_{g}\boxtimes_{\B}\mathcal{C}_{h}\rightarrow \mathcal{C}_{gh}$ for all $g,h\in G$.
 By Remark \ref{remark: torsor and T}, the obstruction $T\in H^{3}(G,\mathcal{O}_{\B}^{\times})$ is vanishes.
 Then by Lemma \ref{asso:category}, the $\B$-bimodule equivalences $\{M_{g,h}\}_{g,h\in G}$ defines an element $M$ of  $\mathcal{T}$.
 Using the Prop. \ref{prop: consrt t}, we get a $2$-cocycle $\phi\in H^{2}(G, K(\B)^{\times})$ associated with $M$.
Note that if we change the element $M\in \mathcal{T}$ by  $\psi\in H^{2}(G,\mathcal{O}^{\times})$ \emph{i.e.}
 replace $M_{g,h}$ by $\psi_{g,h}M_{g,h}$.
 Then the corresponding element $\phi$ will be replaced by $\psi\cdot\phi$ in $H^{2}(G,K(\B)^{\times})$.
 Then using
 $$\mathcal{T}\ni M\mapsto \phi\in H^{2}(G,K(\B)^{\times})$$
 we get a well defined  morphism $\Phi:\mathcal{T}\rightarrow H^{2}(G,K(\B)^{\times})$ of $H^{2}(G,\mathcal{O}_{\B}^{\times})$-torsors. 
 By the definition of $t$ (as in Prop \ref{prop: consrt t}),
 $\pi^{*}(\Phi(\mathcal{T}))=t$.
 This proves the result.
 \end{proof}

\begin{remark}\label{pic:G-graded algebra}
Let $\underline{c}:G\rightarrow \underline{\Pic}(\B)$ be a morphism of $1$-groups which lifts $c$. Let $\D$ denote the corresponding $G$-graded $\B$-bimodule category, $\D=\oplus_{g\in G}\mathcal{C}_{g}$. 
Then the obstruction $T\in H^{3}(G,\mathcal{O}_{\B}^{\times})$ as in Lemma \ref{asso:category} is vanishes and we get a $\B$-bimodule tensor product on $\D$ (cf.\cite[Theorem 8.4]{FusionCA}). 
By Prop. \ref{prop: consrt t},  $K(\D)$ becomes a $G$-graded associative unital $\C$-algebra with trivial  component equal to $K(\B)$.
Note that $K(\B)$ is a Frobenius $\star$-algebra, so we have a linear functional $\lambda_{0}:K(\B)\rightarrow\C$, extend it linearly  to $K(\D)$ by zero outside of $K(\B)$ to get a linear functional on $K(\D)$.
Using Remark \ref{star} and Prop. \ref{Star:unique}, one can define a  $\star$-map on $K(\D)$.
Thus $K(\D)$ becomes a $G$-graded Frobenius $\star$-algebra.
\end{remark}
\begin{remark}\label{star}(see \cite[\textsection 2.1 and \textsection 2.4]{Modularcatcrossedmatrix})
Let $\B$ be a non-degenerate braided fusion category and $\mathcal{M}$ be an invertible $\B$-module category.
Then there exists  a $\Z/N\Z$-graded  fusion category  
$\D=\oplus_{a\in \Z/N\Z}\mathcal{C}_{a}$
such that the trivial component is equal to $\B$ and $\mathcal{C}_{\bar{1}}\cong\mathcal{M}$ as a $\B$-bimodule category.
The complexified Grothendieck ring $K(\D)$ is a $\Z/N\Z$-graded Frobenius $\star$-algebra.
The $\star$-maps $K(\mathcal{M})$ to $K(\mathcal{C}_{N-1})\cong K(\mathcal{M}^{op})$.
\end{remark}

\begin{prop}\label{Thm:pic crossed algebra}
Let $\underline{c}:G\rightarrow \underline{\Pic}(\B)$ be a morphism of $1$-groups and $\D$ denotes the corresponding $G$-graded $\B$-bimodule category, $\D=\oplus_{g\in G}\mathcal{C}_{g}$. 
Then $K(\mathcal{D})$ is a  strict $G$-crossed Frobenius $\star$-algebra with trivial component $K(\B)$.
\end{prop}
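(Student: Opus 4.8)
The plan is to verify the defining conditions of a (strict) $G$-crossed Frobenius $\star$-algebra one at a time, bootstrapping from the $G$-graded structure already in place. First I would invoke Remark \ref{pic:G-graded algebra}: because $\underline{c}$ is a genuine lift of $c$, the obstruction $T\in H^3(G,\mathcal{O}_\B^\times)$ of Lemma \ref{asso:category} vanishes, so $\D=\bigoplus_{g\in G}\mathcal{C}_g$ carries a $\B$-bimodule tensor product and $K(\D)=\bigoplus_{g\in G}K(\mathcal{C}_g)$ is a $G$-graded Frobenius $\star$-algebra whose trivial component is $K(\B)$. This settles the last assertion of the proposition and reduces the task to equipping $K(\D)$ with a compatible $G$-action and checking strictness.

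Next I would produce the $G$-action. Through the equivalence $\underline{\Pic}(\B)\cong\underline{\EqBr}(\B)$ of Remark \ref{pic and eqbr}, the morphism of $1$-groups $\underline{c}$ corresponds to a braided $G$-action on $\B$, and by the results of \cite{FusionCA} this makes $\D$ into a braided $G$-crossed fusion category: a monoidal $G$-action with $g(\mathcal{C}_h)=\mathcal{C}_{ghg^{-1}}$ together with $G$-braiding isomorphisms $c_{X,Y}:X\otimes Y\xrightarrow{\sim}g(Y)\otimes X$ for $X\in\mathcal{C}_g$, $Y\in\D$. Passing to Grothendieck rings, the monoidal $G$-action induces ring automorphisms of $K(\D)$ with $g(K(\mathcal{C}_h))=K(\mathcal{C}_{ghg^{-1}})$, which is the grading-compatibility $g(A_h)\subseteq A_{ghg^{-1}}$. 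Since each $g$ is a tensor autoequivalence it fixes $\mathbbm{1}$ and hence preserves $\lambda$ (the coefficient of $[\mathbbm{1}]$), and it commutes with duality up to canonical isomorphism and therefore with the $\star$-map; thus each $g$ is a Frobenius $\star$-algebra automorphism. Finally, crossed commutativity $ab=g(b)a$ for $a\in A_g$, $b\in A$ is precisely the $G$-braiding read off on classes: for simple $X\in\mathcal{C}_g$ and $Y\in\D$ one has $[X][Y]=[X\otimes Y]=[g(Y)\otimes X]=g([Y])[X]$, extended bilinearly. Hence $K(\D)$ is a $G$-crossed Frobenius $\star$-algebra.

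It remains to prove strictness, which I expect to be the crux. By Lemma \ref{lem:fixedpoints} the partial-action fixed-point set of $g$ on $\Sim(K(\B))$ is always contained in the $G$-action fixed-point set $\{\chi:\chi\circ g^{-1}=\chi\}$, so it suffices to match cardinalities. On one side, having shown $K(\D)$ is $G$-crossed, Remark \ref{crossed algebra:properties} gives $Z_{K(\B)}(K(\mathcal{C}_{g^{-1}}))=K(\mathcal{C}_{g^{-1}})$, so Remark \ref{decompo:centre of Frob star algebra} identifies $\dim_{\C}K(\mathcal{C}_{g^{-1}})$ with the number of partial-action fixed points of $g$. On the other side, the computation of \cite[\textsection 2]{Oncenters} exhibits a basis of $K(\mathcal{C}_{g^{-1}})$ indexed exactly by $\{\chi:\chi\circ g^{-1}=\chi\}$, so $\dim_{\C}K(\mathcal{C}_{g^{-1}})$ equals the number of $G$-action fixed points. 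Equal dimensions together with the inclusion of Lemma \ref{lem:fixedpoints} force equality of the two fixed-point sets for every $g$, which is strictness.

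The main obstacle is this dimension-matching step: one must confirm that the basis of $K(\mathcal{C}_{g^{-1}})$ supplied by \cite{Oncenters} is genuinely indexed by the fixed points of the $G$-action $\chi\mapsto\chi\circ g^{-1}$ coming from $\underline{c}$, rather than by the a priori smaller partial-action set; once this is granted, strictness is a clean counting argument. A secondary point to treat carefully is the compatibility of the $\star$-map built on $K(\D)$ in Remark \ref{pic:G-graded algebra} (via Proposition \ref{Star:unique}) with the duality-induced involution, so that the $G$-action is verifiably $\star$-preserving on every graded component and not merely on $K(\B)$.
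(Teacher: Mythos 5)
Your proposal is correct and its overall architecture matches the paper's proof: both start from Remark \ref{pic:G-graded algebra} to get the $G$-graded Frobenius $\star$-structure, both obtain the $G$-action from the categorical data of \cite{FusionCA}, both read off crossed commutativity from the $G$-braiding $X\otimes Y\cong g(Y)\otimes X$ on Grothendieck classes, and both handle $\star$-compatibility via uniqueness of duals. The differences are two. First, where you invoke the braided $G$-crossed extension theory of \cite{FusionCA} as a black box to produce the monoidal $G$-action, the paper constructs the action explicitly: it identifies $\operatorname{Fun}_{\B}(\mathcal{C}_{g},\mathcal{C}_{gh})$ simultaneously with right multiplication by objects of $\mathcal{C}_{h}$ and left multiplication by objects of $\mathcal{C}_{ghg^{-1}}$, which defines the equivalence $g:\mathcal{C}_{h}\to\mathcal{C}_{ghg^{-1}}$ and, upon evaluation, the $G$-braiding isomorphisms; this is the same mechanism, just unpacked rather than cited. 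Second, and more substantively, for strictness the paper simply cites \cite[Lemma 3.5]{Oncenters}, whereas you derive it by a counting argument: $\dim_{\C}K(\mathcal{C}_{g^{-1}})$ equals the number of partial-action fixed points (via Remark \ref{crossed algebra:properties} and Remark \ref{decompo:centre of Frob star algebra}) and also equals the number of fixed points of $\chi\mapsto\chi\circ g^{-1}$ (via the basis from \cite[\textsection 2]{Oncenters}, which the paper itself records in a remark), so the inclusion of Lemma \ref{lem:fixedpoints} must be an equality. Your route is more self-contained given the paper's stated remarks, at the cost of resting the whole argument on the dimension statement from \cite[\textsection 2]{Oncenters} being about the honest $G$-action fixed points --- a hypothesis you correctly flag as the crux, and which the paper's remark does assert; the paper's route is shorter but leans on an external lemma whose content is exactly the fixed-point equality you reprove.
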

\begin{proof}
By Remark \ref{pic:G-graded algebra},
$K(\D)$ is a $G$-graded Frobenius $\star$-algebra with
 trivial component $K(\B)$.
 To prove the theorem we have to check only  for the $G$-action.

Indeed, for all $g, h \in G$ the category $\mbox{Fun}_{\B}(\mathcal{C}_{g}, \mathcal{C}_{gh})$ of $\B$-module functors from $\mathcal{C}_{g}$ to $\mathcal{C}_{gh}$ is identified,
on one hand, with functors of right tensor multiplication by objects of
$\mathcal{C}_{h}$ and, on the other hand, with functors of left tensor 
multiplication by objects of $\mathcal{C}_{ghg^{-1}}$. So there is an 
equivalence $g :\mathcal{C}_{h}\rightarrow \mathcal{C}_{ghg^{-1}}$ defined
by the isomorphism of $\B$-module functors
\begin{equation}\label{action:equation}
-\otimes Y\cong g(Y)\otimes -:\mathcal{C}_{g}\rightarrow\mathcal{C}_{gh},\ \ \ Y\in\mathcal{C}_{h}.
\end{equation}
Extending it to $\D=\oplus_{g\in G}\mathcal{C}_{g}$ by linearity, we obtain an action of $G$ by  autoequivalences of $\mathcal{D}$. Furthermore, evaluating (\ref{action:equation}) on $X\in\mathcal{C}_{g}$ we
obtain a natural family of isomorphisms
\begin{equation}
    X\otimes Y\xrightarrow{\sim} g(Y)\otimes X,\ \ \ \ g\in G,\ X\in\mathcal{C}_{g},Y\in \mathcal{D}
\end{equation}
which gives an action of $G$ on $K(\D)$ by algebra automorphism. 
As each $g\in G$ acts on $\D$ by autoequivalence and on trivial component it acts by tensor autoequivalence.
So $\dim(\Hom(\mathbbm{1},X))=\dim(\Hom(\mathbbm{1},g(X))$ for all $X\in \B$.
Note that the dual object is unique up to a unique isomorphism.
This implies that the $G$-action commute with the $\star$-map.
Also the fix points for the  partial action is equal to the fix points for the above $G$-action is follows from  \cite[Lemma 3.5]{Oncenters}.
This prove that $K(\D)$ is a strict $G$-crossed extension of $K(\B)$.
\end{proof}
\begin{remark}\label{rk:cohclass}
Let $[\phi]\in H^{2}(G,K(\B)^{\times})$ be the cohomology class corresponding to the $G$-crossed extension $K(\D)$ of $K(\B)$.
Then we have $\pi^{*}(\phi)=t$. Moreover the equivalence class of the lift $\underline{c}:G\rightarrow \underline{\Pic}(\B)$ can be considered an element  $[\underline{c}]$ in the $H^2(G,\mathcal{O}^\times_{\B})$-torsor $\mathcal{T}$ and we have $[\phi]=\Phi([\underline{c}])$. Here $\Phi:\mathcal{T}\to  H^{2}(G,K(\B)^{\times})$ is the mapping from Corollary \ref{Coro: troser to cohomology}.
\end{remark}
\begin{corollary}\label{coro: twisted Verlinde formula}(Twisted Verlinde formula.)
Let $r,s$ be non-negative integers and let $g_1,h_1,\cdots, g_r,h_r$,  $m_{1}, \cdots, m_{s} \in G$ be any elements such that
$[g_{1},h_{1}]\cdots [g_{r},h_{r}]\cdot m_{1}\cdots m_{s}=1$.
Let $G^{\circ}\leq G$ be a subgroup of $G$ generated by 
$g_{i},h_{i},m_{j}$.  For $g\in G$, let $P_{g}$ denote the set of isomorphism classes of simple objects in $\mathcal{C}_{g}$ and define the object  $$\Omega_{g_{i},h_{i}}:=\bigoplus_{X\in P_{g_{i}}} X\otimes h_{i}(X^{\star})\in \mathcal{C}_{[g_i,h_i]}.$$
Let $M_{j}\in \mathcal{C}_{m_{j}}$. Then 
$$\dim(\Hom(\mathbbm{1},\Omega_{g_{1},h_{1}}\otimes\cdots\otimes\Omega_{g_{r},h_{r}}\otimes M_{1}\otimes\cdots \otimes M_{s}))=\sum_{\chi\in\Sim(K(\B))^{G^{\circ}}}\frac{\chi(\alpha_{\chi})^{r-1}\chi^{m_{1}}([M_{1}])\cdots\chi^{m_{s}}([M_{s}])}{\chi\circ\phi(g_{1},h_{1},g_{1}^{-1},h_{1}^{-1},\cdots, m_{1},\cdots, m_{s})}.$$
\end{corollary}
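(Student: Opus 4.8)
The plan is to deduce this categorical statement directly from the algebraic twisted Verlinde formula of Corollary \ref{Verlinde formula:any genus} applied to the strict $G$-crossed Frobenius $\star$-algebra $K(\D)$. By Proposition \ref{Thm:pic crossed algebra}, $K(\D)=\bigoplus_{g\in G}K(\mathcal{C}_g)$ is a strict $G$-crossed Frobenius $\star$-algebra with trivial component $A_1=K(\B)$, with the $\star$-map induced by duality and the $G$-action induced by the braided $G$-action on $\D$. Its class functional $\lambda$ sends a class $[Z]$ to the coefficient of $[\mathbbm{1}]$, namely $\dim\Hom(\mathbbm{1},Z)$. Since the total grade of the product on the left is $[g_1,h_1]\cdots[g_r,h_r]\cdot m_1\cdots m_s=1$, this product lands in $A_1=K(\B)$ and $\lambda$ indeed extracts the multiplicity of the unit. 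Thus the left-hand side of the claimed formula is exactly $\lambda([\Omega_{g_1,h_1}]\cdots[\Omega_{g_r,h_r}]\cdot[M_1]\cdots[M_s])$ computed in $K(\D)$, provided we match the categorical and algebraic data.

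The first step is to check that for each $g\in G$ the set $P_g$ of isomorphism classes of simple objects of $\mathcal{C}_g$ is an orthonormal basis of $K(\mathcal{C}_g)=A_g$ for the Hermitian form $\langle a,b\rangle=\lambda(ab^\star)$. Indeed, for simple $X,Y\in\mathcal{C}_g$ one computes $\langle [X],[Y]\rangle=\lambda([X][Y]^\star)=\lambda([X\otimes Y^\vee])=\dim\Hom(\mathbbm{1},X\otimes Y^\vee)=\dim\Hom(Y,X)=\delta_{X,Y}$, using that the $\star$-map is induced by duality together with rigidity. With this orthonormal basis, the categorical object $\Omega_{g_i,h_i}=\bigoplus_{X\in P_{g_i}}X\otimes h_i(X^\star)$ has class $[\Omega_{g_i,h_i}]=\sum_{X\in P_{g_i}}[X]\cdot h_i([X]^\star)$ in $K(\D)$, which is precisely the element $\Omega_{g_i,h_i}$ appearing in Corollary \ref{Verlinde formula:any genus} for the algebra $K(\D)$; here we use that the categorical $G$-action and the duality-induced $\star$-map pass to exactly the operations on $K(\D)$ required by that corollary (Proposition \ref{Thm:pic crossed algebra}).

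Having matched all the data, the second step is simply to invoke Corollary \ref{Verlinde formula:any genus} with $A=K(\D)$, $a_j=[M_j]\in A_{m_j}$, and $G^{\circ}\leq G$ the subgroup generated by the $g_i,h_i,m_j$. This yields
$$\lambda([\Omega_{g_1,h_1}]\cdots[\Omega_{g_r,h_r}]\cdot[M_1]\cdots[M_s])=\sum_{\chi\in\Sim(K(\B))^{G^{\circ}}}\frac{\chi(\alpha_\chi)^{r-1}\chi^{m_1}([M_1])\cdots\chi^{m_s}([M_s])}{\chi\circ\phi(g_1,h_1,g_1^{-1},h_1^{-1},\cdots,m_1,\cdots,m_s)}.$$
Translating the left-hand side back into $\dim\Hom(\mathbbm{1},\Omega_{g_1,h_1}\otimes\cdots\otimes\Omega_{g_r,h_r}\otimes M_1\otimes\cdots\otimes M_s)$ gives the stated formula, where the $2$-cochain $\phi\in C^2(G,K(\B)^\times)$ is the one produced in Proposition \ref{prop: consrt t} (equivalently representing the class $\Phi([\underline{c}])$ of Remark \ref{rk:cohclass}).

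The main obstacle I expect is not a deep argument but the careful bookkeeping in the first step: verifying that the categorical structures on $K(\D)$ (the duality-induced $\star$, the braided $G$-action, and the grading) coincide with the operations relative to which Corollary \ref{Verlinde formula:any genus} is stated, and in particular that the simple objects of $\mathcal{C}_g$ form an orthonormal basis so that the categorical and algebraic definitions of $\Omega_{g,h}$ agree and the same $\phi$ governs both sides. Once this identification is secured, the formula is a direct transcription of the algebraic result.
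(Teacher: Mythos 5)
Your proposal is correct and takes the same route as the paper: the paper's own proof is the single line that the corollary ``is a special case of Corollary \ref{Verlinde formula:any genus}'' applied to the strict $G$-crossed Frobenius $\star$-algebra $K(\D)$ furnished by Proposition \ref{Thm:pic crossed algebra}. Your additional bookkeeping (checking that the classes of simple objects of $\mathcal{C}_g$ form an orthonormal basis of $K(\mathcal{C}_g)$, so the categorical $[\Omega_{g_i,h_i}]$ coincides with the algebraic $\Omega_{g_i,h_i}$, that $\lambda$ computes $\dim\Hom(\mathbbm{1},-)$, and that the same cochain $\phi$ from Proposition \ref{prop: consrt t} appears on both sides) simply makes explicit the identifications the paper leaves implicit.
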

\begin{proof}
This is a special case of the Corollary \ref{Verlinde formula:any genus}.
\end{proof}
\begin{definition}[Modular Category]
A modular category $\B$ is a braided fusion category with a ribbon twist such that the corresponding $S$-matrix is invertible.
Where  a ribbon twist on a braided fusion category $\B$ is a natural isomorphism $\theta:id_{\B}\rightarrow id_{\B}$ such that $\theta_{C^{\star}}=\theta_{C}^{\star}$  for all $C\in\B$ and 
$$\theta_{C_{1}\otimes C_{2}}=(\theta_{C_{1}}\otimes\theta_{C_{2}})\circ\beta_{C_{2},C_{1}}\circ\beta_{C_{1},C_{2}}\ \ \ \mbox{for any}\ C_{1},\ C_{2}\in \B$$ 
and the unnormalized $S$-matrix, $S$ is an $\mathcal{O}_{\B}\times\mathcal{O}_{\B}$ matrix defined as $$S_{C_{1},C_{2}}\coloneqq \trace(\beta_{C_{2},C_{1}}\circ\beta_{C_{1},C_{2}})\ \mbox{ for}\ C_{1},C_{2}\in \mathcal{O}_{\B}.$$
\end{definition}
\begin{remark}
Using the unnormalized $S$-matrix  $S$ associated with the modular category $\B$, we can identify $P_{1}$ with $\Sim(K(\B))$ as follows:
 $$\mbox{for}\ P_{1}\ni C\mapsto \left(\chi_{C}:[D]\mapsto\frac{S(\B)_{D,C}}{\dim(C)}\right)$$
 \textit{i.e.} the $S$-matrix is essentially the character table of $K(\B)$.
\end{remark}
\begin{remark}
Let $\B$ be a modular category. 
Let $\underline{\EqMod}(\B)$ denote the categorical $1$-group whose objects are modular autoequivalences of $\B$ and morphisms are natural isomorphisms between modular autoequivalences.
One can see that $\underline{\EqMod}(\B)$ is a full $1$-subgroup of the categorical
$1$-group $\underline{\mbox{EqBr}}(\B)$ of all braided autoequivalences of $\B$.
Let $\underline{\underline{\Pic}}^{\mbox{tr}}(\B)\subseteq\underline{\underline{\Pic}}(\B)$ be the full $2$-subgroup formed by those invertible $\B$-module categories which can be equipped with a $\B$-module trace (see \cite{Schaumann}, \cite[\S1.3]{Modularcatcrossedmatrix}). Similarly, we have the full 1-subgroup ${\underline{\Pic}}^{\mbox{tr}}(\B)\subseteq{\underline{\Pic}}(\B).$
From Remark \ref{pic and eqbr} and using \cite[\S1.3]{Modularcatcrossedmatrix}, we have an equivalence of 1-groups $$\underline{\EqMod}(\B)\cong\underline{\Pic}^{\mbox{tr}}(\B).$$
A $\B$-module trace $\trace_{\M}$ on $\M$ assigns a trace $\trace_{\M}(f)\in\C$ for each endomorphism $f:M\rightarrow M$ in $\M$ satisfying some properties and in particular
 we can talk of dimensions $\dim_{\M}(M)$ of objects of $M$.
 We will often assume that the trace is normalized in such a way that
 $\dim(\underline{\mbox{Hom}}(M,N))=\dim_{\M}(M)\cdot\dim_{\M}(N)$ for $M,N\in\M$,
 where $\underline{\mbox{Hom}}(M,N)\in\B$ is the internal Hom.
 With this additional condition, $\trace_{\M}$ is uniquely defined upto scaling by $\pm 1$
 and we have 
 $$\sum_{M\in\mathcal{O}_{\mathcal{M}}}\dim_{\M}(M)^{2}=\dim(\B)=\sum_{C\in \mathcal{O}_{\B}}\dim(C)^{2},$$
 where $\mathcal{O}_{\M}$ denotes the set of simple objects of $\M$.
 Moreover with such a normalization, $\dim_{\M}(M)$ must be a totally real cyclotomic integer for each $M\in\M$.
 For more details we refer to \cite{Modularcatcrossedmatrix}, \cite{DGNO}.
 \end{remark}
 
 Let $\underline{c}:G\rightarrow\underline{\Pic}^{\mbox{tr}}(\B)\cong\underline{\mbox{EqMod}}(\B)$ be a morphism of $1$-groups, i.e. a modular action of $G$ on $\B$.
 Then $\D=\oplus_{g\in G}\mathcal{C}_{g}$ is a $G$-graded category equipped with a modular action of $G$ on $\mathcal{C}_{1}(=\B)$ and each $\mathcal{C}_{g}$ is equipped with a $\mathcal{C}_{1}$-module trace.
 Suppose   $P_{g}$ denotes the set of isomorphism classes of simple objects in $\mathcal{C}_{g}$. 
 In this setting we can define a $P_{g}\times P_{1}^{g}$-matrix $S^{g}$ known as the unnormalized $g$-crossed $S$-matrix as follows:
 \begin{definition}
 For each simple object $C\in P^{g}_{1}$,
 let us choose an isomorphism $\psi_{C}:g(C)\rightarrow C$ such that the induced composition
 $$C=g^{m}(C)\rightarrow g^{m-1}(C)\rightarrow\cdots\rightarrow g(C)\rightarrow C$$ is the identity,
 where $m$ is the order of $g$ in G.
 Note that the above conditions implies that $\psi_{C}$ is well-defined up to scaling by $m^{th}$-root of unity.
 For each simple object $M\in P_{g}$ and $C\in P_{1}^{g}$,
 $$S^{g}_{M,C}\coloneqq \trace_{\mathcal{C}_{g}}(C\otimes M\xrightarrow{\beta_{C,M}}M\otimes C\xrightarrow{\beta_{M,C}}g(C)\otimes M\xrightarrow{\psi_{C}\otimes id}C\otimes M).$$
 \end{definition}
 \begin{remark}
 We will always choose $\psi_{\mathbbm{1}}:\mathbbm{1}\rightarrow\mathbbm{1}$ to be the identity.
 With this convention we see that $S(\B,g)_{M,\mathbbm{1}}=\trace_{\mathcal{C}_{g}}(id_{M})=\dim_{\mathcal{C}_{g}}(M)$,
 the categorical dimension of the object $M\in\mathcal{C}_{g}$.
  \end{remark}
  
  Let us choose   a normalized $\B$-module trace on each invertible $\B$-bimodule category $\mathcal{C}_{g}$ for all $g\in G$.
  For each $g,h\in G$, this induces a normalized $\B$-module trace $\trace_{\mathcal{C}_{g}\boxtimes_{\B}\mathcal{C}_{h}}$ on the invertible $\B$-bimodule category $\mathcal{C}_{g}\boxtimes_{\B}\mathcal{C}_{h}$.
  We have $\B$-module equivalence $\mathcal{C}_{g}\boxtimes_{\B}\mathcal{C}_{h}\cong\mathcal{C}_{gh}$.
  The normalized trace $\trace_{\mathcal{C}_{g}\boxtimes_{\B}\mathcal{C}_{h}}$
  on $\mathcal{C}_{g}\boxtimes_{\B}\mathcal{C}_{h}$ either agrees with the normalized $\B$-module trace $\trace_{\mathcal{C}_{gh}}$ on $\mathcal{C}_{gh}$ or is the opposite of it.
  This gives us a $2$-cocycle,
  $sgn_{tr}:G\times G\rightarrow\{\pm 1\}$ such that 
  $$\trace_{\mathcal{C}_{g}\boxtimes_{\B}\mathcal{C}_{h}}=sgn_{tr}(g,h)\trace_{\mathcal{C}_{gh}}\ \ \ \ \ \mbox{for all}\ g,h\in G.$$
 Using this $2$-cocycle we obtain a central extension 
 \begin{equation}\label{eq:exten}
 1\rightarrow\Z/2\Z\rightarrow\widetilde{G}\rightarrow G\rightarrow 1
 \end{equation}of $G$ by $\Z/2\Z$.
 Now consider the map of $1$-groups 
 $$\widetilde{G}\rightarrow G\rightarrow\underline{\Pic}^{\trace}(\B)$$
 induced by (\ref{eq:exten}). 
 Let $\D'=\oplus_{\tilde{g}\in\widetilde{G}}\mathcal{C}_{\tilde{g}}$ denote the corresponding $\widetilde{G}$-graded $\B$-bimodule category and $K(\D')$ denote its complexified Grothendieck ring.
 Note that if $\tilde{g},\tilde{g_{1}}\in\widetilde{G}$ such that $\tilde{g},\tilde{g_{1}}\mapsto g$ in $G$ then there is a canonical $\B$-module equivalences 
 $$\mathcal{C}_{\tilde{g}}\cong\mathcal{C}_{\tilde{g_{1}}}\cong\mathcal{C}_{g}\ \ \ \ \ \mbox{for all}\ g\in G$$
 and the normalized $\B$-module trace on $\mathcal{C}_{\tilde{g}}$ and $\mathcal{C}_{\tilde{g_{1}}}$ might be differ by sign.
 Suppose  $\tilde{g}\in \widetilde{G}$ such that $\tilde{g}\mapsto g$ in $G$ and trace on $\mathcal{C}_{\tilde{g}}$, $\mathcal{C}_{g}$ are equal.
 Then the unnormalized crossed $S$-matrices $S^{g}$ and $S^{\tilde{g}}$ are same.
 \begin{remark}\label{remark: general twisted character}
 We will allow slightly more general $g$-twisted character in this section \emph{i.e.}
we will allow the $g$-twisted character is well defined up to $(2\cdot o(g))^{th}$-root of unity.
 \end{remark}
 
In view of Remark \ref{remark: general twisted character} and using the results in \cite{Modularcatcrossedmatrix},\cite{Oncenters} we have:
\begin{theorem}\begin{enumerate}
    \item
For $M\in P_{g},\ C\in P_{1}^{g}$ the numbers $\frac{S^{g}_{M,C}}{\dim(M)}=\frac{S^{g}_{M,C}}{S^{g}_{1,M}}$ and 
$\frac{S^{g}_{M,C}}{\dim(C)}=\frac{S^{g}_{M,C}}{S_{1,C}}$ are cyclotomic integers.
For $C\in P_{1}^{g}$ the linear functional $\chi_{C}^{g}:K(C_{g})\rightarrow \C$, $[M]\mapsto\frac{S^{g}_{M,C}}{S_{1,C}}$ is the $g$-twisted character associated with $\chi_{C}\in\Sim(K(\B))^{g}$.
\item The categorical dimension $\dim(\B)$ is a totally positive cyclotomic integer.
We have 
$$S^{g}\cdot\overline{S^{g}}^{T}=\overline{S^{g}}^{T}\cdot S^{g}=\dim(\B)\cdot I.$$
For $C\in P_{1}^{g}$, we have $\chi_{C}(\alpha_{\chi_{C}})$ is equal to the tatally positive cyclotomic integer $\frac{\dim(\B)}{\dim^{2}(C)}$.
\end{enumerate}
\end{theorem}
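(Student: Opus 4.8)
The plan is to deduce every assertion from the crossed $S$-matrix machinery developed in \cite{Modularcatcrossedmatrix} together with the algebraic results on twisted characters in \cite{Oncenters}, the only genuinely new bookkeeping being the sign ambiguity in the $\B$-module traces. First I would pass to the $\widetilde{G}$-graded category $\D'=\oplus_{\tilde{g}\in\widetilde{G}}\mathcal{C}_{\tilde{g}}$ built above from the central extension (\ref{eq:exten}). Over $\widetilde{G}$ the chosen normalized module traces are globally compatible, so the ordinary $g$-twisted characters of $K(\D')$ are honestly well defined; under the canonical equivalences $\mathcal{C}_{\tilde g}\cong\mathcal{C}_g$ they restrict to the generalized $g$-twisted characters of $K(\D)$ of Remark \ref{remark: general twisted character}, i.e. the functionals pinned down only up to a $(2\cdot o(g))$-th root of unity. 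Thus it suffices to prove the statements at the level of $K(\D')$, where the results of \cite{Modularcatcrossedmatrix} apply.

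For part $(1)$ I would first recall from \cite{Modularcatcrossedmatrix} that the linear functional $[M]\mapsto S^{g}_{M,C}/S_{1,C}$ on $K(\mathcal{C}_g)$ is precisely the $g$-twisted character associated with $\chi_C\in\Sim(K(\B))^{g}$; this is the crossed analogue of the fact that the normalized $S$-matrix is the character table of $K(\B)$. The cyclotomic integrality of $S^{g}_{M,C}/\dim(M)$ and $S^{g}_{M,C}/\dim(C)$ then follows from the based-ring structure: left multiplication by the classes $[M]$ on the $\Z$-lattice $K(\D)$ is given by nonnegative integer matrices, so the twisted characters are common eigen-functionals of a commuting family of integer matrices and their values are algebraic integers, while Galois-equivariance of the $G$-action and of the twists confines these values to a cyclotomic field (cf. \cite{Oncenters}). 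Here I would use the normalizations $\dim(M)=S^{g}_{1,M}$ and $\dim(C)=S_{1,C}$ fixed above.

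For part $(2)$, the total positivity of $\dim(\B)$ as a cyclotomic integer is standard for such fusion categories, cf. \cite{Onfusioncategories},\cite{DGNO}. The orthogonality relation $S^{g}\cdot\overline{S^{g}}^{T}=\overline{S^{g}}^{T}\cdot S^{g}=\dim(\B)\cdot I$ is the crossed $S$-matrix non-degeneracy theorem of \cite{Modularcatcrossedmatrix}, itself a consequence of the non-degeneracy of the braiding on $\B$. Finally I would compute $\chi_C(\alpha_{\chi_C})$, which lives entirely in the trivial component $K(\B)$, from the $g=1$ instance of this orthogonality: expanding $\alpha_{\chi_C}$ in the basis of simple classes via the Frobenius pairing $\lambda([D][E])=\delta_{E,D^{\star}}$ gives $\alpha_{\chi_C}=\sum_{D}\overline{S_{D,C}}\,\dim(C)^{-1}[D]$, whence $\chi_C(\alpha_{\chi_C})=\dim(C)^{-2}\sum_{D}|S_{D,C}|^{2}=\dim(\B)/\dim^{2}(C)$. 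That this quotient is a \emph{totally positive} cyclotomic integer is the statement that $\chi_C(\alpha_{\chi_C})$ is the formal codegree attached to $\chi_C$, which is a totally positive cyclotomic integer by \cite{Oncenters}.

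The main obstacle I expect is not any single computation but the reconciliation of two descriptions of the $g$-twisted character: the categorical one arising from the crossed $S$-matrix and module traces, and the purely algebraic one $\alpha^{g}_{\chi}\in K(\mathcal{C}_{g^{-1}})$ attached to the $G$-graded Frobenius $\star$-algebra $K(\D)$. Matching these — and in particular verifying that the sign cocycle $sgn_{tr}$ is exactly the obstruction measured by the $(2\cdot o(g))$-th root of unity ambiguity of Remark \ref{remark: general twisted character} — is where the passage to $\widetilde{G}$ does the essential work. Once that identification is in place, parts $(1)$ and $(2)$ reduce to citing \cite{Modularcatcrossedmatrix} and \cite{Oncenters} together with the short orthogonality computation above.
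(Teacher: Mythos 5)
Your proposal is correct and takes essentially the same approach as the paper: the paper gives no standalone proof of this theorem, stating it as an immediate consequence of the results of \cite{Modularcatcrossedmatrix} and \cite{Oncenters} in view of Remark \ref{remark: general twisted character} and the preceding passage to the $\widetilde{G}$-graded category $\D'$ that absorbs the trace-sign cocycle $sgn_{tr}$ --- precisely the ingredients you assemble. Your explicit orthogonality computation giving $\chi_{C}(\alpha_{\chi_{C}})=\dim(\B)/\dim^{2}(C)$ is a correct verification of a step the paper delegates entirely to the citations.
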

\begin{remark}
Under  the above identification of the twisted characters, the  $2$-cocycle $\phi$ representing a $G$-crossed extension $K(\D)$ of $K(\B)$ changes to  $\phi_{1}=sgn_{tr}\cdot \phi$.
\end{remark}
\begin{corollary}\label{coro:twisted Verlinde ribbon}(Twisted Verlinde formula)
 We continue with the same notations as the Corollary \ref{coro: twisted Verlinde formula}.
Moverover assume that $\B$ is a modular category and  $M_{j}\in P_{m_{j}}$ then 
$$\dim \Hom(\mathbbm{1},\Omega_{g_{1},h_{1}}\otimes\cdots\otimes\Omega_{g_{r},h_{r}}\otimes M_{1}\otimes\cdots \otimes M_{s})$$
$$=(\dim(\B))^{r-1}\sum_{D\in P_{1}^{G^{\circ}}}\frac{(\frac{1}{S_{1,D}})^{s+2r-2}S^{m_{1}}_{M_{1},D}\cdots S^{m_{s}}_{M_{s},D}}{\chi_{D}\circ\phi_{1}(g_{1},h_{1},g_{1}^{-1},h_{1}^{-1},\cdots, m_{1},\cdots, m_{s})}.$$
\end{corollary}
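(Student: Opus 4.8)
The plan is to deduce this modular refinement directly from the algebraic twisted Verlinde formula of Corollary \ref{coro: twisted Verlinde formula} by substituting the explicit expressions for the twisted characters and for the scalars $\chi(\alpha_\chi)$ in terms of the crossed $S$-matrices. First I would invoke the identification of $\Sim(K(\B))$ with $P_1$ furnished by the unnormalized $S$-matrix (the remark that exhibits $S$ as the character table of $K(\B)$), under which a simple object $D\in P_1$ corresponds to the character $\chi_D$ and the subset $\Sim(K(\B))^{G^\circ}$ corresponds to $P_1^{G^\circ}$. Thus the summation over $\chi\in\Sim(K(\B))^{G^\circ}$ in Corollary \ref{coro: twisted Verlinde formula} becomes a summation over $D\in P_1^{G^\circ}$.

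Next I would plug in the two numerical identities supplied by the Theorem immediately preceding this corollary. For $D\in P_1^{m_j}$ the $m_j$-twisted character evaluates as $\chi_D^{m_j}([M_j])=S^{m_j}_{M_j,D}/S_{1,D}$, and the scalar $\chi_D(\alpha_{\chi_D})$ equals $\dim(\B)/\dim^2(D)=\dim(\B)/S_{1,D}^2$, using $S_{1,D}=\dim(D)$. Substituting these into the numerator $\chi(\alpha_\chi)^{r-1}\chi^{m_1}([M_1])\cdots\chi^{m_s}([M_s])$ and collecting the powers of $\dim(\B)$ and of $1/S_{1,D}$ gives $(\dim(\B))^{r-1}(1/S_{1,D})^{2r-2}\cdot S^{m_1}_{M_1,D}\cdots S^{m_s}_{M_s,D}/S_{1,D}^{\,s}$, which is exactly $(\dim(\B))^{r-1}(1/S_{1,D})^{s+2r-2}S^{m_1}_{M_1,D}\cdots S^{m_s}_{M_s,D}$ after pulling the common factor $(\dim(\B))^{r-1}$ outside the sum. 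This reproduces the numerator of the claimed formula.

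Finally I would address the denominator. In the algebraic formula the denominator is $\chi\circ\phi(g_1,h_1,g_1^{-1},h_1^{-1},\ldots,m_1,\ldots,m_s)$ for the cocycle $\phi$ representing the $G$-crossed extension $K(\D)$. The key point is that once the twisted characters are normalized through the crossed $S$-matrices, the relevant cocycle is no longer $\phi$ but $\phi_1=sgn_{tr}\cdot\phi$, where $sgn_{tr}$ is the $\Z/2\Z$-valued cocycle comparing the induced $\B$-module trace on $\mathcal{C}_g\boxtimes_\B\mathcal{C}_h$ with the chosen one on $\mathcal{C}_{gh}$; this is precisely the content of the remark stated just before the corollary. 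Replacing $\chi\circ\phi$ by $\chi_D\circ\phi_1$ in the denominator then yields the stated formula and completes the identification.

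The main obstacle I anticipate is the careful bookkeeping of the sign and root-of-unity ambiguities rather than the substitution itself. The crossed $S$-matrix pins down each $g$-twisted character only up to a $(2\cdot o(g))$-th root of unity, which is exactly why the slightly more general twisted characters of Remark \ref{remark: general twisted character} are needed; with this enlarged ambiguity one must check that the scalars $\phi_\chi(g_1,h_1,g_1^{-1},h_1^{-1},\ldots)$ governing the products of twisted characters are consistently computed by $\chi_D\circ\phi_1$. Concretely this means tracking how the module-trace sign $sgn_{tr}$ propagates through the genus-$r$ expression built from the $\Omega_{g_i,h_i}$ and the $M_j$, and verifying that all the individual $\pm 1$ and root-of-unity ambiguities cancel, so that the final right-hand side is genuinely independent of the auxiliary choices of the isomorphisms $\psi_C$ and of the module-trace normalizations on each $\mathcal{C}_g$.
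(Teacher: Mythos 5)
Your proposal is correct and follows essentially the same route the paper intends: the paper states this corollary without a separate proof precisely because it is the substitution you carry out — identifying $\Sim(K(\B))^{G^\circ}$ with $P_1^{G^\circ}$ via the $S$-matrix, inserting $\chi_D^{m_j}([M_j])=S^{m_j}_{M_j,D}/S_{1,D}$ and $\chi_D(\alpha_{\chi_D})=\dim(\B)/S_{1,D}^2$ into Corollary \ref{coro: twisted Verlinde formula}, and replacing $\phi$ by $\phi_1=sgn_{tr}\cdot\phi$ as dictated by the remark preceding the corollary. Your attention to the $(2\cdot o(g))^{th}$-root-of-unity and trace-normalization ambiguities matches the role of Remark \ref{remark: general twisted character} in the paper's setup.
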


\begin{remark}
We continue using the same notation as  Proposition \ref{Thm:pic crossed algebra}.
Fix a $g\in G$ then we have an action of $g$ on $\D$. 
Let us form a $g$-equivariantization $\D^{g}$ from this action.
The objects of $\D^{g}$ can be thought as pairs $(X,\psi_{X})$, where $X\in \D$ and $\psi_{X}:g(X)\xrightarrow{\cong} X$ and  $\D^{g}=\oplus_{h\in C_{G}(g)}\mathcal{C}_{h}^{g}$.
The Grothendieck ring $K(\B^{g})$ is a commutative ring with identity element $[(\mathbbm{1},id_{\mathbbm{1}})]$ and $K(D^{g})$ is a $C_{G}(g)$-graded ring with trivial component equal to $K(\B^{g})$. 
Let us form a quotient algebras
$$K(\B,g)\coloneqq\frac{K(\B)}{([(\mathbbm{1},\omega)]-\omega[\mathbbm{1},id_{\mathbbm{1}}])}\ \ \mbox{and}\ \ K(\D,g)\coloneqq\frac{K(\D)}{([(\mathbbm{1},\omega)]-\omega[\mathbbm{1},id_{\mathbbm{1}}])}$$
where $\omega$ is a primitive  $o(g)^{th}$ root of unity.
The $\star$-map on $K(\D)$ and complex conjugation on $\C$ induces a $\star$-map on $K(\B,g)$ and $K(\D,g)$.
Furthermore, if $g'\in C_{G}(g)$ then we have an induced action of $g'$ on $K(\D,g)$.
Thus $K(\D,g)$ is a $C_{G}(g)$-crossed Frobenius $\star$-algebra with trivial component equal to $K(\B,g)$.
For more details about these algebras  see \cite{Modularcatcrossedmatrix}.
\end{remark}

\subsection{Relationship with strongly graded algebras}\label{sec:strongly graded algebra}

  Let $G$ be a finite group and let $A=\oplus_{g\in G}R_{g}$
be a $G$-graded ring. Recall that $A$ is called \emph{strongly graded} if the multiplication
map $R_{g}\otimes_{\Z} R_{h}\rightarrow R_{gh}$ is surjective. In this situation we say that $A$ is a strongly $G$-graded extension of $R_{1}$. 
By \cite{Groupgraded} for a strongly
$G$-graded ring $A$ the induced maps $R_{g}\otimes_{R_{1}} R_{h}\rightarrow R_{gh}$ are isomorphisms.
In particular $R_{g}$ is an invertible $R_{1}$-bimodule for any $g\in G$.
Thus a strongly $G$-graded extension defines a group homomorphism $g\mapsto R_{g}$ from $G$ to $\Pic(R_{1})$, the group of isomorphism classes of  invertible $R_{1}$-bimodule.
A group homomorphism $\rho:G\rightarrow \Pic(R)$  is called as \emph{realizable} if $\rho$ comes from  some strongly $G$-graded extension $R$ as defined above. 

Let $\rho:G\rightarrow \Pic(R)$ be a group homomorphism.
For each isomorphism class $\rho(g)$, choose an invertible $R$-bimodule $R_{g}$. 
Since $\rho$ is a group homomorphism, the modules $R_{g}\otimes_{R_{1}} R_{h}$ and $R_{gh}$ are isomorphic as $R$-bimodule for each pair $g,h\in G$.
Then we can choose $R$-bimodule isomorphisms
$\phi_{g,h}:R_{g}\otimes_{R_{1}} R_{h}\rightarrow R_{gh}$ with
$\phi_{1,g}(r\otimes x)=rx$ and
$\phi_{g,1}(x\otimes r)=xr$ for $g\in G,\ r\in R,\ x\in R_{g}$.
Then by \cite{Obstruction} the following diagram:
\begin{center}
\begin{tikzcd}
R_{f}\otimes_{R_{1}} R_{g}\otimes_{R_{1}} R_{h}\arrow[r, "\phi_{f,g}\boxtimes Id_{R_{h}}"]\arrow[d, swap, "Id_{R_{f}}\boxtimes \phi_{g,h}"]&
R_{fg}\otimes_{R_{1}} R_{h}\arrow[d, "\phi_{fg,h}"] \\
R_{f}\otimes_{R_{1}} R_{gh}\arrow[r, "\phi_{f,gh}"]&
R_{fgh}
\end{tikzcd}
\end{center}
commutes upto an element $T(\rho)\in H^{3}(G,Z(R)^{\times})$.

The following result is proved in \cite{Obstruction}:
\begin{theorem}\label{realizable}
 The set of isomorphism classes of strongly $G$-graded extensions of ring $R$ corresponding to a group homomorphism $\rho:G\rightarrow \Pic(R)$ is non empty iff $T(\rho)$ is trivial in $ H^{3}(G,Z(R)^{\times})$.
Moreover, the set of isomorphism classes of strongly $G$-graded extensions of ring $R$ corresponding to a group homomorphism $\rho:G\rightarrow \Pic(R)$ is torsor under the abelian group $H^{2}(G,Z(R)^{\times})$.
\end{theorem}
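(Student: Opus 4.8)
The plan is to build every such extension out of a fixed set of bimodule representatives and to measure the failure of associativity by a cohomology class, following the standard obstruction-theoretic template for crossed products. First I would fix, for each $g\in G$, an invertible $R$-bimodule $R_{g}$ in the isomorphism class $\rho(g)$ with $R_{1}=R$, and choose normalized bimodule isomorphisms $\phi_{g,h}:R_{g}\otimes_{R}R_{h}\xrightarrow{\sim}R_{gh}$ as in the statement. The crucial preliminary fact is that for an invertible $R$-bimodule $M$ one has $\End_{(R,R)}(M)\cong Z(R)$, so that $\operatorname{Aut}_{(R,R)}(M)\cong Z(R)^{\times}$; this is exactly what forces the scalars appearing below to land in $Z(R)^{\times}$. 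Any strongly $G$-graded extension realizing $\rho$ is, after transport along bimodule isomorphisms of its graded pieces, isomorphic to $\bigoplus_{g}R_{g}$ with multiplication assembled from some such family $\{\phi_{g,h}\}$, so it suffices to analyze these families.

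Second, I would compare the two composites in the associativity square. Both are $R$-bimodule isomorphisms $R_{f}\otimes_{R}R_{g}\otimes_{R}R_{h}\to R_{fgh}$, so by the automorphism computation they differ by a scalar $T_{f,g,h}\in Z(R)^{\times}$. A routine diagram chase, juxtaposing two associativity squares in the two standard ways, shows that $T=T(\rho)$ satisfies the $3$-cocycle identity, giving a class $[T(\rho)]\in H^{3}(G,Z(R)^{\times})$. I would then check independence of all choices: replacing $\phi_{g,h}$ by $\psi_{g,h}\phi_{g,h}$ with $\psi\in C^{2}(G,Z(R)^{\times})$ replaces $T$ by $(\partial\psi)\,T$, and changing the representatives $R_{g}$ by bimodule isomorphisms induces a further coboundary, so $[T(\rho)]$ depends only on $\rho$. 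For the existence statement, the family $\{\phi_{g,h}\}$ defines an associative (and, by the normalization, unital) multiplication precisely when $T\equiv 1$; since rescaling $\phi$ by $\psi$ alters $T$ by $\partial\psi$, such a rescaling exists if and only if $T$ is a coboundary, i.e. if and only if $[T(\rho)]=0$.

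Finally, assuming $[T(\rho)]=0$, fix one associative base structure $\phi^{0}$. Every other realizing multiplication on $\bigoplus_{g}R_{g}$ has the form $\phi=\psi\cdot\phi^{0}$, and associativity of $\phi$ is equivalent to $\partial\psi=1$, i.e. to $\psi\in Z^{2}(G,Z(R)^{\times})$. A graded ring isomorphism that is the identity on $R$ is given by a family of bimodule automorphisms $\mu_{g}\in Z(R)^{\times}$ with $\mu_{1}=1$, and compatibility with multiplication changes $\psi$ by the coboundary $\partial\mu$. Hence isomorphism classes of such extensions are in bijection with $Z^{2}/B^{2}=H^{2}(G,Z(R)^{\times})$, and the construction is manifestly compatible with the evident $H^{2}(G,Z(R)^{\times})$-action, so the set is a torsor over $H^{2}(G,Z(R)^{\times})$. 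I expect the main obstacle to be the foundational identification $\operatorname{Aut}_{(R,R)}(M)\cong Z(R)^{\times}$ for invertible bimodules, together with the careful bookkeeping of normalizations that guarantees unitality and that isomorphisms correspond exactly to $1$-cochains; once these are in place the cocycle and torsor computations are formal.
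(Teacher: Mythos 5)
The paper itself contains no proof of this theorem: it is imported from \cite{Obstruction} (``The following result is proved in \cite{Obstruction}''), so there is no internal argument to compare yours against. Your sketch is the standard generalized-crossed-product obstruction argument that such references contain, and its skeleton is sound: $\End(M)\cong Z(R)$ for an invertible bimodule $M$ (via the equivalence $-\otimes_{R}M^{-1}$ on bimodule categories), hence the associativity discrepancies $T_{f,g,h}$ are central units; the pentagon-style chase gives the $3$-cocycle identity; rescaling the $\phi_{g,h}$ changes $T$ by a coboundary, which yields both well-definedness of $[T(\rho)]$ and the existence criterion; and a basepoint together with the $Z^{2}/B^{2}$ computation gives the torsor structure.

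Two things are glossed over and must be made explicit for the argument to be correct. First, and more substantively, $Z(R)^{\times}$ is not a trivial $G$-module here: $\rho$ induces an action of $G$ on $Z(R)^{\times}$ through the canonical map $\Pic(R)\to\operatorname{Aut}(Z(R))$ (for invertible $M$ and $z\in Z(R)$, left multiplication by $z$ on $M$ is a bimodule endomorphism, hence equals right multiplication by a unique element $\sigma_{M}(z)\in Z(R)$), and every differential in your argument must be taken with respect to this module structure. Concretely, a central unit slides across $\otimes_{R}$ only up to this twist, so in your last step the graded isomorphism $(\mu_{g})_{g}$ changes $\psi$ by the \emph{twisted} coboundary $(g,h)\mapsto\mu_{g}\,(g\cdot\mu_{h})\,\mu_{gh}^{-1}$ rather than by $\mu_{g}\mu_{h}\mu_{gh}^{-1}$, and similarly one term of the $3$-cocycle identity for $T$ appears with a $G$-twist. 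Read with the trivial action, your formulas are false precisely in the situation this paper uses the theorem, namely $R$ commutative carrying a nontrivial $G$-action, where the classifying group is $H^{2}(G,R^{\times})$ with its twisted module structure (compare Theorem \ref{main result}). Second, your transport step --- that every strongly graded extension realizing $\rho$ is isomorphic to some $\bigl(\oplus_{g}R_{g},\{\phi_{g,h}\}\bigr)$ --- silently uses that for a strongly graded ring the multiplication maps $A_{g}\otimes_{R}A_{h}\to A_{gh}$ are bimodule \emph{isomorphisms}, not merely surjections; this is exactly the fact the paper quotes from \cite{Groupgraded} just before the theorem, and without it your comparison scalars need not be units. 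Neither point changes the architecture of your proof, but both are needed to carry it out.
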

\begin{remark}\label{Remark:automorphism inverible module}
Let $R$ be a ring with unity and let $\alpha$ be an automorphism of  $R$ then one can associate to it an invertible $R$-module $R_{\alpha}$, which is the same left $R$-module as $R$ with right action given by $x\cdot y=x\alpha(y)$ for all $x,y\in R$.
\end{remark}
\begin{lemma}\label{lemma: auto to inver}
Let $\rho:G\rightarrow \mbox{Aut}(R)$ be a group homomorphism.
Then it induces a homomorphism $\rho':G\rightarrow \Pic(R)$ and 
 the corresponding element $T(\rho')\in H^{3}(G,Z(R)^{\times})$ is trivial.
\end{lemma}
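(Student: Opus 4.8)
The plan is to first check that $\rho'$ is a well-defined group homomorphism $G\to\Pic(R)$, and then to prove that $T(\rho')$ vanishes, not by a direct cocycle computation but by exhibiting a strongly $G$-graded extension of $R$ realizing $\rho'$ and invoking Theorem \ref{realizable}.

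For the first step, I would record the general fact that for automorphisms $\alpha,\beta$ of $R$ there is a canonical bimodule isomorphism $R_{\alpha}\otimes_{R} R_{\beta}\cong R_{\alpha\circ\beta}$, where $R_{\alpha}$ is the invertible bimodule of Remark \ref{Remark:automorphism inverible module}. Concretely, the map $x\otimes y\mapsto x\alpha(y)$ is well defined (it respects the balancing relation $x\alpha(r)\otimes y=x\otimes ry$ since both sides map to $x\alpha(ry)$), it is left $R$-linear, and it carries the right action $y\mapsto y\beta(r)$ on $R_\beta$ to the action $z\mapsto z\,\alpha(\beta(r))$ on the target, so its image is precisely $R_{\alpha\circ\beta}$; it is an isomorphism since both sides are free of rank one as left $R$-modules. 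Applying this with $\alpha=\rho(g)$, $\beta=\rho(h)$ and using that $\rho$ is a homomorphism gives $R_{\rho(g)}\otimes_{R} R_{\rho(h)}\cong R_{\rho(gh)}$, so $g\mapsto[R_{\rho(g)}]$ indeed defines a homomorphism $\rho'\colon G\to\Pic(R)$.

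For the vanishing of the obstruction, I would consider the skew group ring $A\coloneqq R\rtimes_{\rho} G=\bigoplus_{g\in G}R u_{g}$ with multiplication determined by $u_{g} r=\rho(g)(r)u_{g}$ and $u_{g} u_{h}=u_{gh}$, i.e. $(r u_{g})(s u_{h})=r\,\rho(g)(s)\,u_{gh}$. This is a $G$-graded ring with trivial component $R u_{1}\cong R$, and it is strongly graded because $u_{gh}=u_{g} u_{h}\in (R u_{g})(R u_{h})$ forces $R u_{gh}\subseteq (R u_{g})(R u_{h})$. Under the identification $r u_{g}\leftrightarrow r$, the $g$-component $R u_{g}$ becomes $R$ with left multiplication and right action $r\cdot s=r\,\rho(g)(s)$; that is, $R u_{g}\cong R_{\rho(g)}$ as $R$-bimodules. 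Hence $A$ is a strongly $G$-graded extension of $R$ whose associated homomorphism $G\to\Pic(R)$ is exactly $\rho'$.

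Since such an extension exists, the set of isomorphism classes of strongly $G$-graded extensions of $R$ realizing $\rho'$ is non-empty, and Theorem \ref{realizable} then forces $T(\rho')$ to be trivial in $H^{3}(G,Z(R)^{\times})$. The only real subtlety is matching conventions: one must correctly identify the $g$-graded piece of the crossed product with the bimodule $R_{\rho(g)}$ (the right action picking up the twist by $\rho(g)$), after which the argument is purely formal — the mere existence of the crossed product replaces any explicit verification that the associativity $3$-cocycle is a coboundary. I expect no genuine obstacle beyond this bookkeeping.
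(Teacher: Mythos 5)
Your proposal is correct and takes essentially the same route as the paper: the paper also realizes $\rho'$ by the crossed product $A=\bigoplus_{g\in G}R_{g}$ with multiplication given by $x\otimes y\mapsto x\rho_{g}(y)$ (which is exactly your skew group ring $R\rtimes_{\rho}G$), and then invokes Theorem \ref{realizable} to conclude that $T(\rho')$ is trivial. Your write-up merely makes explicit the bookkeeping (well-definedness of the balancing, identification of $Ru_{g}$ with $R_{\rho(g)}$) that the paper leaves implicit.
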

\begin{proof}
For any automorphism $\rho_{g}\coloneqq\rho(g)$ of $R$, we can associate  an invertible $R$-module $R_{g}$ as in Remark \ref{Remark:automorphism inverible module}.
Then   $\phi'_{g,h}:R_{g}\otimes_{R} R_{h}\rightarrow R_{gh}$ defined by $\phi'_{g,h}(x\otimes y)=x\rho_{g}(y)$ and extend linearly to $R_{g}\otimes_{R}R_{h}$ is an isomorphism.
Hence, $g\mapsto R_{g}$ is a group homomorphism and $A=\oplus_{g\in G}R_{g}$ is a strongly graded extension of $R$.
By Theorem \ref{realizable}, $T(\rho')\in H^{3}(G,Z(R)^{\times})$ is trivial. 
\end{proof}
\begin{remark}\label{remark:strongly exten and coho bijection}
Let $\rho:G\rightarrow \mbox{Aut}(R)$ be a group homomorphism and let $A=\oplus_{g\in G}R_{g}$ be the corresponding strongly $G$-graded extension of $R$ (constructed in the proof of Lemma \ref{lemma: auto to inver}). 
Using this strongly $G$-graded extension $A$ of $R$, we get a bijection between the set of isomorphism classes of strongly $G$-graded extensions of $R$ corresponding to homomorphism $\rho'$ and $H^{2}(G,Z(R)^{\times})$.
\end{remark}
\begin{remark}
Let $R$ be a commutative Frobenius $\star$-algebra.
Then by \cite[Prop.5.4]{k-theory}, $\mbox{Aut}(R)\ni\alpha\mapsto [R_{\alpha}]\in\Pic(R)$ is an isomorphism. 
\end{remark}

\begin{prop}\label{prop: strongly graded -strict}
Let $R$ be a commutative Frobenius $\star$-algebra.  Suppose $\rho:G\rightarrow\mbox{Aut}^{\Frob}(R)\subset\mbox{Aut}(R)\cong\Pic(R)$ is a group homomorphism.  
Let $A=\oplus_{g\in G}A_{g}$ be a  strongly $G$-graded extension of $R$ corresponding to the homomorphism $\rho$.
Then $A$ has a unique structure of a $G$-graded Frobenius $\star$-extension of $R$ and $Z_{R}(A)$ is a strict $G$-crossed extension of $R$.
\end{prop}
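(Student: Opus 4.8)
The plan is to describe $A$ as a crossed product, pin down the Frobenius functional, build the $\star$-map cohomologically, and then read off $Z_R(A)$ from an explicit computation with idempotents.

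\emph{Crossed-product description and the functional.} Because $\rho$ lands in $\operatorname{Aut}(R)$ and $R$ is commutative, the invertible bimodule $A_g$ underlying the strongly graded extension is free of rank one on each side, so I can choose an invertible $u_g\in A_g$ (with $u_1=1$) and write $A=\bigoplus_{g\in G}Ru_g$ with relations $u_gr=\rho_g(r)u_g$ for $r\in R$ and $u_gu_h=\phi(g,h)u_{gh}$, where $\phi\in Z^2(G,R^\times)$ is a normalized $2$-cocycle for the $\rho$-action; by Remark \ref{remark:strongly exten and coho bijection} (with $Z(R)=R$) its class $[\phi]\in H^2(G,R^\times)$ is the invariant of the extension. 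The functional is forced to be $\lambda_0$ on $A_1=R$ and zero on the other components. I would check that $(A,\lambda)$ is a symmetric Frobenius algebra: the induced pairing $A_g\times A_{g^{-1}}\to\C$, $(ru_g,su_{g^{-1}})\mapsto\lambda_0(r\,\rho_g(s)\,\phi(g,g^{-1}))$, is nondegenerate since $\lambda_0$ is and $\phi(g,g^{-1})\in R^\times$, and the class-functional identity $\lambda(ab)=\lambda(ba)$ reduces, after applying the $\lambda_0$-preserving automorphism $\rho_g$, to $\phi(g,g^{-1})=\rho_g(\phi(g^{-1},g))$, which is precisely the normalized cocycle relation at $(g,g^{-1},g)$. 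Uniqueness of the anti-involution is then immediate from Proposition \ref{Star:unique}, since any admissible $\star$ restricts to the given involution on $A_1=R$ while $\lambda$ is already determined.

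\emph{Existence of the anti-involution.} This is the heart of the argument. I would seek $\star$ in the form $u_g^\star=\theta(g)u_{g^{-1}}$ for a cochain $\theta\colon G\to R^\times$, extended by $(ru_g)^\star=\theta(g)\rho_{g^{-1}}(r^\star)u_{g^{-1}}$ and conjugate-linearity. The axioms $(a^\star)^\star=a$ and $(ab)^\star=b^\star a^\star$ translate into an involutivity relation and a coboundary-type relation for $\theta$ (the $\rho$-twisted analogue of the cochain $\psi(g,h)=\phi(g,h)^\star/\phi(h^{-1},g^{-1})$ appearing in Proposition \ref{G-crosed str}(4)), while positive-definiteness diagonalizes in the basis $\{e_\chi u_g\}$ and becomes the condition that $\rho_g(\theta(g))\phi(g,g^{-1})$ be totally positive in $R$ for every $g$ (using $e_\chi^\star=e_\chi$ from Remark \ref{frob star:properties} and $\lambda_0(e_\chi)>0$). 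I would solve these simultaneously through the polar decomposition $R^\times\cong R_{>0}\times U$ into totally positive units and $U=\{x:x^\star=x^{-1}\}$, coming from the sequence $1\to U\to R^\times\to R^\times/U\to 1$ of Remark \ref{remark: unitary and unit}: positivity forces the unitary part of $\theta(g)$, and the remaining equations for the totally positive part live in the uniquely divisible module $R^\times/U\cong R_{>0}$, where $H^{\ge 1}(G,R^\times/U)=0$ provides a solution. The delicate point, and the main obstacle of the proof, is to verify that the unitary part of $\theta$ forced by positivity is simultaneously compatible with the involutivity and multiplicativity relations; this compatibility is exactly where the $2$-cocycle identity for $\phi$ is consumed.

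\emph{The centralizer and strictness.} Granting the Frobenius $\star$-structure, $Z_R(A)=Z_{A_1}(A)$ is automatically a $G$-graded Frobenius $\star$-algebra by Definition \ref{def:centralizer}, with trivial component $Z_R(R)=R$. Computing graded pieces, $ru_g$ centralizes $R$ iff $r(\rho_g(s)-s)=0$ for all $s\in R$, which supports $r$ on the idempotents $e_\chi$ with $\chi\circ\rho_g^{-1}=\chi$; hence $Z_R(A_g)=\bigoplus_{\chi\in\Sim(R)^g}\C\,e_\chi u_g$, where $\Sim(R)^g$ is the fixed-point set of the action $g\cdot\chi=\chi\circ\rho_g^{-1}$. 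I would take the $G$-action to be conjugation $g(a):=u_gau_g^{-1}$, checking that it is independent of the choice of $u_g$ on $Z_R(A)$ (the ambiguity lies in $R^\times$, which commutes with $Z_R(A)$), restricts to $\rho_g$ on $R$, and satisfies $g(Z_R(A)_h)\subseteq Z_R(A)_{ghg^{-1}}$. Two axioms simplify because elements of $Z_R(A)$ centralize $R$: crossed commutativity $ab=g(b)a$ for $a\in Z_R(A)_g$ follows since $u_g^{-1}a\in R$ commutes with $b$, and $\star$-equivariance $g(b^\star)=g(b)^\star$ reduces to $v_gb=bv_g$ with $v_g:=u_g^\star u_g\in A_{g^{-1}}A_g=A_1=R$; preservation of $\lambda$ is the class-functional property $\lambda(u_gbu_g^{-1})=\lambda(b)$. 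Finally, the support computed above shows that the partial-action fixed set of $g$ on $\Sim(R)$ equals its fixed set $\Sim(R)^g$ for the action (\ref{action}); thus the inclusion of Lemma \ref{lem:fixedpoints} is an equality for every $g$, so $Z_R(A)$ is a \emph{strict} $G$-crossed extension of $R$, completing the proof.
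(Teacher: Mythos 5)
Your crossed-product setup, the uniqueness argument via Proposition \ref{Star:unique}, and your treatment of $Z_R(A)$ (conjugation action, well-definedness because $R^\times$ commutes with the centralizer, crossed commutativity, and the support computation giving strictness) are all sound and essentially agree with the paper's proof of the second half. The genuine gap is exactly where you flag it: the existence of the anti-involution. You write down the three constraints on $\theta$ (involutivity, anti-multiplicativity, and total positivity of $\rho_g(\theta(g))\phi(g,g^{-1})$) and assert they can be solved simultaneously via the polar decomposition $R^\times\cong R_{>0}\times U$, but you explicitly defer the verification that the unitary part of $\theta$ forced by positivity is compatible with the other two relations --- and that verification is the entire content of the first assertion of the proposition. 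As written, ``I would solve these simultaneously\ldots the delicate point is to verify\ldots'' is a plan, not a proof: nothing in the proposal shows the system is consistent, and the consistency is not formal (it consumes the cocycle identity in an essential way, as you yourself note).

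The fix --- and it is what the paper does --- is to reorder the steps so that no compatibility check is ever needed: use Remark \ref{remark: unitary and unit} (i.e. $H^2(G,U)\cong H^2(G,R^\times)$, the same polar-decomposition and unique-divisibility input you invoke) \emph{before} introducing $\theta$, to replace $\phi$ by a cohomologous cocycle taking values in $U$; equivalently, lift the extension $1\to R^\times\to A^\times\to G\to 1$ to a subextension $1\to U\to U(A)\to G\to 1$ and choose the homogeneous units $u_g$ inside $U(A)$. Then one simply defines $x^\star=x^{-1}$ on $U(A)$ and extends $R$-semilinearly: anti-multiplicativity and involutivity become direct computations (using that $\phi$ now takes values in $U$ and that $\rho_g$ commutes with the involution on $R$, Corollary \ref{auto-star commute}), and positivity is automatic because $u_gu_g^\star=1$ gives $\langle ru_g,su_g\rangle=\lambda_0(rs^\star)$, the given positive-definite form on $R$. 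In your notation this amounts to taking $\theta(g)=\rho_{g^{-1}}(\phi(g,g^{-1}))^{-1}$ after adjusting the $u_g$ by the cochain that makes $\phi$ unitary-valued; with that reordering your ``delicate point'' disappears, and the rest of your argument goes through as written.
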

\begin{proof}
Note that $A$ is a strongly $G$-graded extension of $R$ and  $A_{g}$ is isomorphic to $R$ as left  $R$-module for all $g\in G$. 
Also the right action of $R$ on $A_{g}$ is given by $\rho_{g}$.
As $A$ is a strongly $G$-graded extension of $R$, so we have a short exact sequence:
$$1\rightarrow R^{\times}\rightarrow A^{\times}\rightarrow G\rightarrow 1.$$
By Remark \ref{remark: unitary and unit}, every extension of $G$ by $R^{\times}$
 comes from some extension of $G$ by $U$ as in the following commutative diagram:
\begin{center}
\begin{tikzcd}
1\arrow[r]&
U\arrow[hookrightarrow]{r}\arrow[hookrightarrow]{d}&
U(A)\arrow[r]\arrow[hookrightarrow]{d}&
G\arrow[r]\arrow[equal]{d}&
1\\
1\arrow[r]&
R^{\times}\arrow[hookrightarrow]{r}&
A^{\times}\arrow[r]&
G\arrow[r]&
1
\end{tikzcd}
\end{center}
Define a $\star$-map on $U(A)\subseteq A$ as $x^{\star}=x^{-1}$.
Then one can extend this $\star$-map to $A$ by $R$-semilinearly.
Thus, $A$ with the above defined $\star$-map becomes a $G$-graded Frobenius $\star$-algebra. 
This proves the first part of  proposition.

Now, $Z_{R}(A)$ is a $G$-graded Frobenius $\star$-algebra.
Note that $A^{\times}$ acts on $Z_{R}(A)$ by conjugation and the normal subgroup  $R^{\times}$ acts trivially on $Z_{R}(A)$.
Hence we get a well defined action of $G\  (\cong A^{\times}/R^{\times})$ on $Z_{R}(A)$.
With this action of $G$, $Z_{R}(A)$ becomes a $G$-crossed Frobenius $\star$-algebra extension of $R$.
As $A$ is a strongly $G$-graded algebra, the partial action of $G$ on $\Sim(R)$ is the proper action of $G$ on $\Sim(R)$ induced by the given action of $G$ on $R$. In particular, the fixed points for the partial action and the proper action of $G$ on $\Sim(R)$ are equal. Moreover by Remark \ref{decompo:centre of Frob star algebra}(1) the fixed points of the two partial actions on $\Sim(R)$ coming from the $G$-graded algebras $A$ and $Z_R(A)$ are equal.
Hence by the definition of strict $G$-crossed extensions,
$Z_R(A)$ is a strict $G$-crossed Frobenius $\star$-extension of $R$.
This proves the result.
\end{proof}
\begin{remark}
Let $R$ be a commutative Frobenius $\star$-algebra with a group homomorphism $\rho:G\rightarrow \mbox{Aut}^{\Frob}(R)$.
Then by using  Remark \ref{remark:strongly exten and coho bijection}, Theorem \ref{main result} and by Prop. \ref{prop: strongly graded -strict},
 $$A\rightsquigarrow Z_{R}(A)$$
 is a bijection between the set of isomorphism classes of strongly graded extensions of the commutative Frobenius $\star$-algebra $R$ corresponding to the homomorphism $\rho$
and set of all isomorphism classes strict $G$-crossed extensions of $R$ (corresponding to homomorphism $\rho$).
Note that by \cite{Obstruction}, the strongly graded extensions of $R$ as above are also classified by $H^2(G,R^\times)$.
\end{remark}

\bibliographystyle{alpha}
\bibliography{References}

\begin{thebibliography}{DGNO10}

\bibitem[Aro18]{Twisted}
Prashant Arote.
\newblock Twisted orthogonality relations for certain
  $\mathbb{Z}/m\mathbb{Z}$-graded algebras, 2018.

\bibitem[Bas68]{k-theory}
Hyman Bass.
\newblock {\em Algebraic K-theory}.
\newblock W. A. Benjamin, 1968.

\bibitem[CG01]{Obstruction}
Antonio~M. Cegarra and Antonio~R. Garz\'{o}n.
\newblock Obstructions to {C}lifford system extensions of algebras.
\newblock {\em Proc. Indian Acad. Sci. Math. Sci.}, 111(2):151--161, 2001.

\bibitem[Dad80]{Groupgraded}
Everett~C. Dade.
\newblock Group-graded rings and modules.
\newblock {\em Math. Z.}, 174(3):241--262, 1980.

\bibitem[Dad86]{Dade1986}
Everett~C. Dade.
\newblock Clifford theory for group-graded rings.
\newblock {\em Journal für die reine und angewandte Mathematik}, 369:40--86,
  1986.

\bibitem[Des17]{Modularcatcrossedmatrix}
Tanmay Deshpande.
\newblock Modular categories, crossed {S}-matrices, and {S}hintani descent.
\newblock {\em Int. Math. Res. Not. IMRN}, (4):967--999, 2017.

\bibitem[Des19]{Oncenters}
Tanmay Deshpande.
\newblock On centers of bimodule categories and induction-restriction functors.
\newblock {\em Int. Math. Res. Not. IMRN}, (2):578--605, 2019.

\bibitem[DGNO10]{DGNO}
Vladimir Drinfeld, Shlomo Gelaki, Dmitri Nikshych, and Victor Ostrik.
\newblock On braided fusion categories. {I}.
\newblock {\em Selecta Math. (N.S.)}, 16(1):1--119, 2010.

\bibitem[DM19]{Deshpande2019CrossedMC}
Tanmay {Deshpande} and Swarnava {Mukhopadhyay}.
\newblock {Crossed modular categories and the Verlinde formula for twisted
  conformal blocks}.
\newblock {\em arXiv e-prints}, page arXiv:1909.10799, September 2019.

\bibitem[ENO05]{Onfusioncategories}
Pavel Etingof, Dmitri Nikshych, and Viktor Ostrik.
\newblock On fusion categories.
\newblock {\em Ann. of Math. (2)}, 162(2):581--642, 2005.

\bibitem[ENO10]{FusionCA}
Pavel Etingof, Dmitri Nikshych, and Victor Ostrik.
\newblock Fusion categories and homotopy theory.
\newblock {\em Quantum Topol.}, 1(3):209--273, 2010.
\newblock With an appendix by Ehud Meir.

\bibitem[Lus87]{Lus:Arcata}
G.~Lusztig.
\newblock Leading coefficients of character values of {H}ecke algebras.
\newblock In {\em The {A}rcata {C}onference on {R}epresentations of {F}inite
  {G}roups ({A}rcata, {C}alif., 1986)}, volume~47 of {\em Proc. Sympos. Pure
  Math.}, pages 235--262. Amer. Math. Soc., Providence, RI, 1987.

\bibitem[Sch13]{Schaumann}
Gregor Schaumann.
\newblock Traces on module categories over fusion categories.
\newblock {\em J. Algebra}, 379:382--425, 2013.

\bibitem[Tur00]{Turaev1999HomotopyFT}
Vladimir Turaev.
\newblock Homotopy field theory in dimension 3 and crossed group-categories,
  2000.

\bibitem[Tur08]{crossedgroupcategories}
Vladimir Turaev.
\newblock Crossed group-categories.
\newblock {\em Arabian Journal for Science and Engineering}, 33(2):483--503,
  2008.

\end{thebibliography}
\end{document}